\DeclareMathAlphabet\mathbfcal{LS2}{stixcal}{b}{n}
\numberwithin{equation}{section}
\DeclareFontFamily{OT1}{rsfs}{}
\DeclareFontShape{OT1}{rsfs}{n}{it}{<-> rsfs10}{}
\DeclareMathAlphabet{\mathscr}{OT1}{rsfs}{n}{it}
\theoremstyle{plain}
\newtheorem{theorem}{Theorem}[section]
\newtheorem{lemma}[theorem]{Lemma}
\newtheorem{corollary}[theorem]{Corollary}
\theoremstyle{definition}
\newtheorem{remark}[theorem]{Remark}
\renewcommand\P{\mathbb{P}}
\newcommand\E{\mathbb{E}}
\newcommand\Var{\mathrm{Var}}
\newcommand\Cov{\mathrm{Cov}}
\newcommand\R{\mathbb{R}}
\newcommand\Z{\mathbb{Z}}
\newcommand\C{\mathbb{C}}
\renewcommand\Re{{\operatorname{Re}}}
\newcommand\tr{{\operatorname{tr}}}
\newcommand\rhosc{{\rho_{\operatorname{sc}}}}
\newcommand\eps{\varepsilon}
\begin{document}

\title[Minors of GUE at fixed index]{On the distribution of eigenvalues of GUE and its minors at fixed index}

\author{Terence Tao}
\address{UCLA Department of Mathematics, Los Angeles, CA 90095-1555.}
\email{tao@math.ucla.edu}

%\email{}

\subjclass[2020]{60B20}

\begin{abstract}  We obtain bounds on the distribution of normalized gaps of eigenvalues of $N \times N$ GUE matrix in the bulk, that do not lose logarithmic factors of $N$ in the limit $N \to \infty$.  As an application, we obtain fixed index universality results for the GUE minor process, which in turn are useful for establishing limiting results for random hives with GUE boundary data.
\end{abstract}

\maketitle

\section{Introduction}

The purpose of this paper is to establish some distributional estimates on the normalized gaps $g_i^{(M)}$ of the GUE minor process in the bulk.  These estimates will be used in a forthcoming paper of Narayanan \cite{narayanan-limit} to establish limit theorems for random hives with GUE boundary data.

\subsection{Eigenvalues of GUE}

Let $N$ be a large natural number.
Define a \emph{GUE random matrix $H$} to be a random Hermitian $N \times N$ matrix with probability density function proportional to $e^{-\tr H^2}$, so that the off-diagonal entries are complex gaussians $N(0,1/4)_\C$ with variance $1/4$ (and the diagonal entries are real gaussians $N(0,1/2)_\R$ with variance $1/2$); this normalization is chosen to be consistent with that in \cite{gustavsson}, although we will often then divide $H$ by $\sqrt{2N}$ to place the spectrum largely inside the unit interval $[-1,1]$.  The eigenvalues of this matrix are almost surely\footnote{All assertions to this paper are understood to only hold outside of an exceptional event of probability zero.} simple, and can be viewed either as a coupled system of $N$ real random variables
$$ \lambda_1 < \dots < \lambda_N$$
or as an $N$-element point process $\Sigma \coloneqq \{\lambda_1,\dots,\lambda_N\}$, which we will refer to as the \emph{GUE eigenvalue process}.  One can study these eigenvalues either\footnote{There is also the \emph{averaged index} or \emph{averaged energy} setting, in which one is permitted to average in either the index parameter $i$ or the energy parameter $x$.  The two averaged settings are largely equivalent to each other (thanks to the eigenvalue rigidity results discussed below), and are of course easier to obtain results for than the fixed index or fixed energy setting; but it can be non-trivial to pass from averaged results back to fixed index or energy results.} in the \emph{fixed index} setting in which the index $i$ is a fixed function of $N$, or in the \emph{fixed energy} setting in which we inspect the spectrum near $x \sqrt{2N}$ for some fixed energy $x \in [-1,1]$ that will typically be independent of $N$ (and in the \emph{bulk region} $[-1+\delta,1-\delta]$ for some fixed $\delta>0$).

The fixed energy theory of the GUE eigenvalue process is well understood by the theory of determinantal processes.  In particular, for any continuous symmetric compactly supported function $F: \R^k \to \R$, we have the identity
\begin{equation}\label{det}
  \E \sum_{i_1 < \dots < i_k} F(\lambda_{i_1},\dots,\lambda_{i_k}) = \frac{1}{k!} \int_{\R^k} F(x_1,\dots,x_k) \det( K_N(x_i,x_j))_{1 \leq i,j \leq k}\ dx_1 \dots dx_k
\end{equation}
where the kernel $K_N$ can be given by the Christoffel--Darboux formula
$$ K(x,y) = \sqrt{\frac{N}{2}} \frac{h_N(x) h_{N-1}(y) - h_{N-1}(x) h_N(y)}{x-y} e^{-(x^2+y^2)/2}$$
for $x \neq y$ and
$$ K(x,y) = (N h_N(x)^2 - \sqrt{N(N+1)} h_{N-1}(x) h_{N+1}(x)) e^{-x^2}$$
for $x=y$, where the Hermite polynomials
$$ h_N(x) \coloneqq \frac{(-1)^N}{\pi^{1/4} 2^{N/2} \sqrt{N!}} e^{x^2} \frac{d^N}{dx^N} e^{-x^2}.$$
$h_0, h_1, \dots$ are the orthonormal polynomials of $e^{-x^2}\ dx$; see e.g., \cite{mehta}.  As typical applications of this identity, the mean
and variance of the number of eigenvalues $\# (\Sigma \cap I)$ in an interval $I$ can be computed as
$$ \E \# (\Sigma \cap I) = \int_I K_N(x,x)\ dx$$
and
$$ \Var \# (\Sigma \cap I) = \int_I K_N(x,x)\ dx - \int_I \int_I K_N(x,y)^2\ dx dy.$$
Furthermore, $\# (\Sigma \cap I)$ has the distribution of a sum of independent Bernoulli variables, with means given by the eigenvalues of the kernel $K_N$ restricted to $I$; see \eqref{sigma-card} below. Our interest will however be in the fixed index regime, which as we shall see can also be studied from the theory of determinantal processes, but with additional effort.

Many properties of the GUE eigenvalue process are known.  For instance, the Wigner semicircular law asserts that the normalized empirical spectral distribution
$$ \frac{1}{N} \sum_{i=1}^N \delta_{\lambda_i/\sqrt{2N}} = \frac{1}{N} \sum_{\lambda \in \Sigma} \delta_\lambda$$
converges in probability (in the vague topology) to the semicircular distribution $\rhosc(x)\ dx$ given by
$$ \rhosc(x) \coloneqq \frac{2}{\pi} (1-x^2)_+^{1/2}.$$
Equivalently, for any interval $I \subset \R$, the random variable
$$ \frac{1}{N} \# \{ 1 \leq i \leq n: \lambda_i/\sqrt{2N} \in I \} = \frac{1}{N} \# (\Sigma \cap \sqrt{2N} I)$$
converges in probability to $\int_I \rhosc(x)\ dx$.  Another equivalent form of this law is that if $1 \leq i \leq N$, then $\lambda_i/\sqrt{2N}-\gamma_{i/N}$ converges in probability to zero, where the \emph{normalized classical location} $\gamma_{i/N}$ is the unique element of $[-1,1]$ such that
\begin{equation}\label{gammai-def}
  \int_{-\infty}^{\gamma_{i/N}} \rhosc(x)\ dx = \frac{i}{N}.
\end{equation}
Informally, this asserts the heuristic
\begin{equation}\label{lambdai}
\lambda_i \approx \sqrt{2N} \gamma_{i/N} + o(\sqrt{2N})
\end{equation}
in some probabilistic sense.  The phenomenon of \emph{eigenvalue rigidity} ensures that the error term here can be strengthened significantly, as we shall shortly discuss.

Now let us restrict attention to the \emph{bulk region}
\begin{equation}\label{bulk-def}
  \delta n \leq i \leq (1-\delta)n
\end{equation}
for some fixed $\delta>0$; by the semicircular law, this is essentially equivalent to restricting the spectrum $\Sigma$ to $[(-1+\delta')\sqrt{2N}, (1-\delta')\sqrt{2N}]$ for a slightly different constant $\delta'>0$.  In this\footnote{Analogous results at the edge of the spectrum are also established in \cite{gustavsson}.} region, a well known central limit theorem of Gustavsson \cite[Theorem 1.1]{gustavsson} asserts that the normalized eigenvalue
$$ \frac{\lambda_i - \gamma_{i/N} \sqrt{2N}}{\left(\frac{\log N}{4(1-\gamma_{i/N}^2)N}\right)^{1/2}}$$
converges in distribution to the usual gaussian distribution $N(0,1)_\R$.  Informally, this implies that the heuristic \eqref{lambdai} can in fact be strengthened in the bulk region \eqref{bulk-def} to
\begin{equation}\label{lambdai-improv}
  \lambda_i \approx \sqrt{2N} \gamma_{i/N} + O_\delta\left(\frac{\log^{1/2} N}{N^{1/2}}\right).
\end{equation}
This result was obtained in \cite{gustavsson} from a corresponding central limit for the fixed-energy eigenvalue counting function $\# (\Sigma \cap (-\infty,x))$ for various energies $x$ in the bulk.

The heuristic \eqref{lambdai-improv} is further supported by the results of Dallaporta \cite[Theorem 5]{dallaporta}, who in our notation\footnote{See Section \ref{notation-sec} for our asymptotic notation conventions.} and normalization conventions established\footnote{Again, analogous results at the edge, or in intermediate regions between the bulk and edge, are also established in \cite{dallaporta}.} the second moment estimate
\begin{equation}\label{second}
  \E |\lambda_i -\sqrt{2N} \gamma_{i/N}|^2 \ll_\delta \frac{\log N}{N}
\end{equation}
for $i$ in the bulk region \eqref{bulk-def}, so in particular we have the mean estimate
$$ \E \lambda_i = \sqrt{2N} \gamma_{i/N} + O_\delta\left(\frac{\log^{1/2} N}{N^{1/2}}\right)$$
and the variance bound
$$ \Var \lambda_i \ll_\delta \frac{\log^{1/2} N}{N^{1/2}}.$$
Indeed, by comparing with the results of Gustavsson, we in fact have a matching lower bound for the variance as well:
$$ \Var \lambda_i \gg_\delta \frac{\log^{1/2} N}{N^{1/2}}.$$
A further result \cite[Proposition 6]{dallaporta} of Dallaporta then gives the concentration inequality
\begin{equation}\label{lconc}
  \P\left( |\lambda_i - \sqrt{2N} \gamma_{i/N}| \geq u/\sqrt{N} \right) \ll \exp\left( - \frac{c_\delta u^2}{u+\log N}\right)
\end{equation}
for some $c_\delta>0$ and all $u>0$.  Among other things, this allows one to also obtain higher moment estimates
\begin{equation}\label{moment}
  \E |\lambda_i -\sqrt{2N} \gamma_{i/N}|^p \ll_{\delta,p} \frac{\log^{p/2} N}{N^{p/2}}
\end{equation}
for any $p > 0$; see the end of \cite[\S 1.1]{dallaporta}; as another consequence of this concentration inequality we see that for every $A>0$ one has the eigenvalue rigidity estimate
\begin{equation}\label{rigidity}
  \lambda_i = \sqrt{2N} \gamma_{i/N} + O_{A,\delta}\left(\frac{\log N}{\sqrt{N}} \right)
\end{equation}
outside of an event of probability $O(N^{-A})$. Again, these fixed-index results were obtained as a consequence of fixed-energy analogues, which can be in turn established from the theory of determinantal processes.

\subsection{Eigenvalue gaps in the bulk}

Now we turn to eigenvalue gaps $\lambda_{i+1}-\lambda_i$, again staying in the bulk region\footnote{As is well known, eigenvalue gaps at the edge are instead described by the Tracy--Widom law \cite{tracy}.} for simplicity.  From the smoothness of $\rhosc$ in the bulk region, it is easy to see from \eqref{gammai-def} that
$$ \gamma_{i+m} = \gamma_{i/N} + \frac{m}{N \rhosc(\gamma_{i/N})} + O_\delta\left( \frac{\log^{O(1)} N}{N^2}\right)$$
whenever $i$ is in the bulk region \eqref{bulk-def} and $m = O(\log^{O(1)} N)$.  Comparing this asymptotic with \eqref{lambdai-improv} or \eqref{rigidity} leads the informal heuristic
\begin{equation}\label{ni}
  g_i \approx 1,
\end{equation}
where the \emph{normalized gap} $g_i$ is defined as
\begin{equation}\label{gap-def}
g_i \coloneqq \sqrt{N/2} \rhosc(\gamma_{i/N}) (\lambda_{i+1}-\lambda_i).
\end{equation}
However, we caution that the derivation of \eqref{ni} is not rigorous because a naive application of the triangle inequality here leads to an error term of $O(\sqrt{\log N})$ which overwhelms the right-hand side of \eqref{ni}; for instance, \eqref{lconc} and the triangle inequality only gives the bound
\begin{equation}\label{gap-weak}
  \P\left( g_i \geq u \right) \ll \exp\left( - \frac{c_\delta u^2}{u+\log N}\right)
\end{equation}
for $u>0$, which heuristically corresponds to an upper bound $g_i \lessapprox \sqrt{\log N}$.

More generally, we are led to the heuristic
\begin{equation}\label{ni-m}
  g_i +g_{i+1}+\dots+g_{i+m-1} \approx m
\end{equation}
for $m = O(\log^{O(1)} N)$, though again due to the logarithmic loss in \eqref{lambdai-improv} or \eqref{rigidity}, this latter estimate is currently only rigorous for $m \gg \sqrt{\log N}$ (or $m \gg \log N$, if one wants a failure probability of $O(N^{-A})$).

Nevertheless, in \cite{Tao-gap} it was shown that the normalized gap $g_i$
converges in probability in the vague topology to the Gaudin distribution $p(y)\ dy$, which can be for instance defined by the formula
$$ p(y) = \frac{d^2}{dy^2} \det(1 - K_{\mathrm{Sine}})_{L^2([0,y])}$$
where the determinant here is a Fredholm determinant, and $K_{\mathrm{Sine}}: \R \times \R \to \R$ is the \emph{Dyson sine kernel}
$$ K_{\mathrm{Sine}}(x,y) \coloneqq \frac{\sin(\pi(x-y))}{\pi(x-y)}$$
interpreted here as an integral operator on $L^2([0,y])$.  In other words, we have
\begin{equation}\label{gi-u}
  \P( g_i \geq u ) = \int_u^\infty p(y)\ dy + o(1)
\end{equation}
for any fixed $u>0$.  As in previous results, a fixed-energy version of this result was already known from the theory of determinantal processes \cite{deift}; in order to pass from this to the fixed-index assertion in \cite{Tao-gap}, it was necessary to show the eigenvalue gap $\lambda_{i+1}-\lambda_i$ was \emph{universal} in the sense that it did not change under small perturbations of $i$.  This was achieved in that paper by a more advanced application of determinantal process techniques.  A much more general universality result was subsequently established in \cite[Theorem 2.2]{erdos-yau}, which in our context asserts that for any fixed $k$ and fixed compactly supported Lipschitz $F \colon \R^k \to \R$, and any $i,j$ in the bulk region \eqref{bulk-def}, that
\begin{equation}\label{universal}
  \E F( g_i, g_{i+1}, \dots, g_{i+k-1} )
= \E F( g_j, g_{j+1}, \dots, g_{j+k-1} ) + O_{\delta,F,k}(N^{-c})
\end{equation}
for some absolute constant $c>0$.  In contrast to the previous methods, the arguments here did not rely on determinantal process techniques, instead using the local convergence to equilibrium of the Dyson Browian motion, as well as H\"older regularity theory for (discrete) parabolic equations.  As a consequence, their argument applied to more general Wigner matrices than GUE (and the two expectations in this universality relation could be applied to two different Wigner models).  Notably, a power saving $N^{-c}$ in the error term is now obtained in \eqref{universal}, which was not available in previous results such as \eqref{gi-u} (largely due to the fact that the variance of eigenvalue counting functions was only logarithmic in size).  On the other hand, the result in \eqref{universal} does not directly apply to observables $F$ that are not of compact support, or which depend on a number of variables $k$ that grow with $N$; as we shall discuss later, such extensions are desirable when studying an extension of the GUE eigenvalue process, namely the GUE minor process.  While in principle it seems possible to obtain some extensions of the bound \eqref{universal} to such settings, the arguments in \cite{erdos-yau} are rather lengthy, and we did not attempt to modify these arguments to handle these broader cases.

We mention two further results about the distribution of gaps.  The gap theorem from \cite[Theorem 19]{taovu}, when specialized to GUE, asserts that for every $c_0>0$ there exists $c_1>0$ such that
\begin{equation}\label{gap-thm}
 \P( g_i < N^{-c_0} ) \ll_\delta N^{-c_1}
\end{equation}
for any $i$ in the bulk region \eqref{bulk-def}.  This result was in fact established for a more general class of Wigner matrices; for the specific case of GUE, this result can also be established from fixed-energy calculations and the eigenvalue rigidity
estimate \eqref{rigidity}, using factors of $N^{-c_0}$ to absorb the logarithmic loss in that rigidity estimate.  We leave the details to the interested reader.  Finally, we mention that for every $c_0>0$ there exists $c_1>0$ such that one has the covariance estimate
$$ \Cov( P_1(g_i), P_2(g_j) ) \ll_{\delta,c_0} \|P_1\|_{C^5} \|P_2\|_{C^5} N^{-c_1} $$
for all $i,j$ in the bulk \eqref{bulk-def} with $|i-j| \geq N^{c_0}$ and all smooth compactly supported $P_1,P_2 \colon \R \to \R$; see \cite[Proposition 3.3(1)]{cipolloni}.  By a truncation argument together with \eqref{gap-weak}, one can then establish\footnote{This covariance bound can also be established more directly from \cite[Lemmas 4.2, 4.3]{cipolloni} (L\'aszl\'o Erd\H{o}s, private communication).}
$$ \Cov( g_i, g_j ) \ll_{\delta,c_0} N^{-c_1/2} $$
for the same $i,j$; see \cite{narayanan} for an application of this estimate.  In \cite{cipolloni}, estimates of this type were also used to obtain various ``quenched universality'' results for GUE and Wigner matrices; we refer readers to that paper for further details.

\subsection{New results}

In this paper we establish the following additional bounds on the distribution of eigenvalue gaps in the bulk, that further support the heuristics \eqref{ni}, \eqref{ni-m}.

\begin{theorem}[Gap bounds]\label{gap-bounds} Let $i$ lie in the bulk region \eqref{bulk-def} for some fixed $\delta > 0$.
  \begin{itemize}
    \item[(i)] (Concentration inequality) For any $0 < h \ll \log\log N$, we have $\P( g_i \geq h) \ll_\delta \exp(-h/4)$.
    \item[(ii)] (Moment bound) For any $p>0$, we have $\E g_i^p  \ll_{p,\delta} 1$.
    \item[(iii)] (Lower tail bound)  For any $h > 0$, we have $\P( g_i \leq h) \ll_\delta h^{2/3} \log^{1/2} \frac{1}{h}$.
    \item[(iv)] (Local eigenvalue rigidity) For any natural number $0 < m \leq \log^{O(1)} N$ and $\alpha>0$, one has
    \begin{equation}\label{imi}
      \P ( |g_i + \dots + g_{i+m-1} - m| > \alpha) \ll_\delta \frac{\log^{4/3} (2+m)}{\alpha^2}
    \end{equation}
    and
    \begin{equation}\label{imi-2}
      \E ( |g_i + \dots + g_{i+m-1} - m|^2 ) \ll_\delta \log^{7/3} (2+m).
    \end{equation}

  \end{itemize}
\end{theorem}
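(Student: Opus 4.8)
The plan is to reduce every part to an estimate on the eigenvalue counting function $N(x) \coloneqq \#(\Sigma \cap (-\infty,x])$ at \emph{deterministic} energies, where the determinantal structure is cleanest. The dictionary is $\{\lambda_i \le x\} = \{N(x) \ge i\}$ and $\{\lambda_{i+1} > x\} = \{N(x) \le i\}$, so $\{N(x) = i\} = \{\lambda_i \le x < \lambda_{i+1}\}$; and, as recalled in \eqref{sigma-card}, for a fixed interval $I$ the count $\#(\Sigma \cap I)$ is a sum of independent Bernoullis with total mean $\int_I K_N(x,x)\,dx$, so it obeys a Gaussian local limit theorem together with Bernstein-type tail bounds, while $\P(\Sigma \cap I = \emptyset) \le \exp(-\int_I K_N(x,x)\,dx)$; the variance of $N(x)$ at a bulk energy $x$ is $\asymp \log N$, and that of $\#(\Sigma \cap I)$ when $I$ contains $\asymp L$ eigenvalues is $\asymp \log(2+L)$. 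Write $s \coloneqq (\sqrt{N/2}\,\rhosc(\gamma_{i/N}))^{-1}$ and $x_i \coloneqq \sqrt{2N}\gamma_{i/N}$. I would cover each event under study by a union, over a deterministic grid $\{a_k\}$ in a window of $\asymp \log N$ mean spacings around $x_i$ (the complement being handled by \eqref{rigidity}), of events $\{N(a_k) = i\} \cap \{E_k\}$ where $E_k$ lives on a short interval $J_k$ to the right of $a_k$: for (i), $E_k = \{\Sigma \cap J_k = \emptyset\}$, $|J_k| \asymp hs$; for (iii) (after a dyadic decomposition of $\{g_i \le h\}$ in the value of $g_i$), $E_k = \{\#(\Sigma \cap J_k) \ge 2\}$, $|J_k| \asymp hs$; and for (iv), after using \eqref{gammai-def} and smoothness of $\rhosc$ to telescope
\[
  g_i + \dots + g_{i+m-1} - m = \sqrt{N/2}\,\rhosc(\gamma_{i/N})\,(\lambda_{i+m}-\lambda_i) - m + O_\delta(m^2/N),
\]
$E_k$ is a count deficit $\{\#(\Sigma \cap J_k) \le m-1\}$ on an interval of $\asymp (m+\alpha)$ spacings, or an excess $\{\#(\Sigma \cap J_k) \ge m+1\}$ on one of $\asymp (m-\alpha)$ spacings.

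The reason for always carrying the factor $\{N(a_k) = i\}$ is that the rigidity window has width $\asymp \log N$ spacings, so a naive union bound over the grid loses a factor $\log N$ — fatal, since the targets must be uniform in $N$ at fixed $h$ or $\alpha$. But the local limit theorem for $N(a_k)$ gives $\P(N(a_k) = i) \ll (\log N)^{-1/2}\exp(-c(a_k - x_i)^2/(s^2 \log N))$, and summing this Gaussian weight over a grid of the appropriate spacing reproduces exactly the normalization $O(1/h)$ (resp.\ $O(1)$, using a short grid of spacing $\asymp s$ and allowing $N(a_k)$ to lie in a bounded window of indices), with no residual power of $\log N$; the short-range event $E_k$ then supplies the decay in $h$ or $\alpha$. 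Making this rigorous is the heart of the matter: one needs joint control of $N(a_k)$ and $E_k$, which comes from the fact that $J_k$ is disjoint from $(-\infty, a_k]$, so conditioning the process on its configuration in $J_k$ leaves the complement determinantal, and $N(a_k)$ remains a Bernoulli sum whose mean and variance are perturbed only by $O(\mathrm{polylog})$; equivalently, the joint transform $\E[u^{N(a_k)}v^{\#(\Sigma \cap J_k)}]$ is a Fredholm determinant of $1$ minus two compressions of $K_N$, amenable to a saddle-point analysis in $u$. This gives $\P(N(a_k) = i, E_k) \ll (\log N)^{-1/2}\exp(-c(a_k-x_i)^2/(s^2 \log N))\, q(E_k)$, where for (i) $q(E_k) \le \exp(-\int_{J_k}K_N) = \exp(-h(1+o(1)))$; for (iii) $q(E_k)$ is the probability that $J_k$, of $\asymp h$ spacings, carries two eigenvalues, controlled through the (conditional) two-point function and the level-repulsion bound $K_N(u,u)K_N(v,v) - K_N(u,v)^2 \ll s^{-4}(u-v)^2$ for $|u-v| \lesssim s$; and for (iv) $q(E_k)$ is a count deficit/excess probability, bounded by Chebyshev/Bernstein from conditional mean $m \pm \alpha + O(1)$ and variance $\asymp \log(2+m)$.

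Summing in each case: for (i) one gets $\P(g_i \ge 2h) \ll h^{-1}\exp(-h(1+o(1)))$, hence $\P(g_i \ge h) \ll_\delta \exp(-h/4)$, the hypothesis $h \ll \log\log N$ being used only to keep all the sine-kernel approximations comfortably valid. Part (ii) follows from $\E g_i^p = \int_0^\infty p h^{p-1}\P(g_i \ge h)\,dh$: the range $h \le \log\log N$ contributes $\ll_p 1$ by (i), and $h > \log\log N$ contributes $o(1)$ using the cruder union bound with hole probability $\le \exp(-ch^2)$, which there totals at most $\log N \cdot \exp(-c(\log\log N)^2) = o(1)$. For (iii), summing the per-probe bound against the Gaussian weights and running the dyadic decomposition yields the lower-tail analogue of the desired estimate; the loss incurred in making the two-point estimate uniform in $N$ and in optimizing the auxiliary scales is what degrades it to the stated $h^{2/3}\log^{1/2}(1/h)$ (I do not expect this exponent to be sharp). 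For (iv), summing the per-probe bound against the Gaussian weights (one-spacing grid, $N(a_k)$ in a bounded window of indices) gives $\P(|g_i+\dots+g_{i+m-1}-m| > \alpha) \ll_\delta \log^{4/3}(2+m)/\alpha^2$, i.e.\ \eqref{imi}; then \eqref{imi-2} comes from $\E|\cdot|^2 = \int_0^\infty 2\alpha\,\P(|\cdot|>\alpha)\,d\alpha$, using \eqref{imi} for $\alpha$ up to $\asymp m$ and the rapid Bernstein decay of $q(E_k)$ beyond, which integrates to $O_\delta(\log^{7/3}(2+m))$.

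The step I expect to be the main obstacle is the joint estimate of the second paragraph: verifying that the Gaussian local limit theorem and the Bernstein concentration for $N(a_k)$ survive conditioning on the configuration in $J_k$, uniformly over all grid points $a_k$ in the rigidity window (including those toward its edge, where the Gaussian weight is itself small), and tracking the resulting errors cleanly. It is precisely these errors that introduce the extra logarithmic factors in (iii) and (iv) and prevent one from reading off the (presumably true but harder) bounds $\exp(-h)$, $h^3$, and $\log(2+m)/\alpha^2$. Everything else — the reductions to counting functions, the grid combinatorics, the telescoping in (iv), and the tail integrations — should be routine once this joint estimate is in hand.
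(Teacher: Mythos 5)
Your overall architecture matches the paper's: reduce each part to a joint probability at a deterministic energy, of the form $\P(N_{x,t}=N-i\wedge E)$ where $E$ is a short-range event, gain $\asymp (\log N)^{-1/2}$ from the constraint $N_{x,t}=N-i$, and cover the randomness of $\lambda_i$ by ranging over $t$ in a rigidity window of $\asymp\log N$ spacings (the paper integrates continuously over $t$ and applies Fubini plus Markov, where you propose a discrete grid, but that is an inessential difference). The genuine divergence is in how the key joint estimate is proved. You propose conditioning the process on its exact configuration in $J_k$ (or equivalently analyzing the joint generating function $\E[u^{N(a_k)}v^{\#(\Sigma\cap J_k)}]$ by a saddle-point), then arguing that $N(a_k)$ remains a Bernoulli sum with variance perturbed by $O(\mathrm{polylog})$; this is plausible but you leave it unverified, and carrying it out would require uniform control of the conditioned kernel's variance and a local limit theorem under that conditioning. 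The paper instead uses the mixture representation of the restricted process: spectrally decompose the compressed kernel as $\sum_i\lambda_i\phi_i\phi_i$, introduce independent Bernoullis $\xi_i\sim\mathrm{Ber}(\lambda_i)$, and observe that fixing the count is exactly fixing $\sum_i\xi_i=n$. The factor $A^{-1/2}$ then comes from an elementary discrete local limit theorem (Lemma~\ref{lem-xi}(iv)--(v)) applied to a carefully chosen subset $S$ of indices with $\lambda_i(1-\lambda_i)\asymp 1$ and $\#S\asymp A$; the short-range event is then estimated conditionally on $\xi$ using hole-probability/repulsion/variance formulae for the random projection kernel $K_\xi$. This sidesteps all the delicate uniformity issues you flag, at the cost of having to verify the non-degeneracy condition \eqref{A-2} (that the ``middle'' spectrum carries a constant fraction of $\Var\#(\Sigma\cap J)$), which you do not flag and which is checked in Section~\ref{rotarch-sec} via Plancherel--Rotach. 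For part (iii) the paper also does not need a dyadic decomposition or a two-point kernel bound; the repulsion estimate follows directly from Lemma~\ref{lem-xi}(iii) for the mixture.
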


We do not expect these bounds to be sharp; the ``Wigner surmise'' (see, e.g., \cite{tracy}) predicts that the $\exp(-h/4)$ decay in (i) should instead be Gaussian in nature, and Gaudin kernel asymptotics (or GUE eigenvalue repulsion) similarly predicts that the $h^{2/3} \log^{1/2} \frac{1}{h}$ bound in (ii) should instead be $h^3$.  The range of $h$ for which the estimate (i) is applicable should also be able to be enlarged.  We also expect the right-hand side of (iv) to be improved, in analogy with \eqref{second}.  Nevertheless, these bounds suffice for our main application below; the key feature is that the bounds do not contain factors of $\log N$ on the right-hand side (in contrast to what would happen if one were to apply, for instance, \eqref{rigidity}), and also exhibit only logarithmic growth in the $m$ parameter.

From the moment bound (ii) and a standard truncation argument, one can now remove the compact support hypothesis from \eqref{universal}, at the cost of making the error terms qualitative.  More precisely, if $k$ is fixed and $F: \R^k \to \R$ is a continuous function of polynomial growth, we now have
\begin{equation}\label{smash}
\E F( g_i, g_{i+1}, \dots, g_{i+k-1} )
= \E F( g_j, g_{j+1}, \dots, g_{j+k-1} ) + o(1)
\end{equation}
for $i,j$ in the bulk region \eqref{bulk-def} for some fixed $\delta>0$.  Indeed, one can truncate $F$ to a compact set and approximate by a Lipschitz continuous function, using (ii) to control the error and \eqref{universal} to control the main term, and then take limits in the usual fashion.  In particular, the mixed moments of $g_i,\dots,g_{i+k-1}$ only differ by $o(1)$ from those of $g_j,\dots,g_{j+k-1}$; for instance, one now has
$$ \Var g_i = \Var g_j + o(1)$$
and
$$ \Cov(g_i, g_{i+m}) = \Cov(g_j, g_{j+m}) + o(1)$$
for any fixed $m$.

We now remark on the methods of proof of Theorem \ref{gap-bounds}, which we prove in Section \ref{rotarch-sec}.  Following \cite{Tao-gap}, we shall rely on the theory of determinantal processes, combined with the classical Plancherel--Rotach asymptotics for the determinantal kernel of the GUE process.  This theory is good at counting the number $\#(\Sigma \cap J)$ of (suitably normalized) eigenvalues in a given interval $J$, where $\Sigma$ is a normalized version of the GUE eigenvalue process; indeed, as is well known \cite{hough}, this number is the sum of independent Bernoulli variables.  However, to obtain control on gaps of a fixed index, this is not sufficient; one needs to get some control on the \emph{joint} distribution of two eigenvalue counts $\#(\Sigma \cap J)$, $\#(\Sigma \cap I)$ for two nested intervals $I \subset J$.  Here, due to subtle correlations between these two random variables, one cannot interpret this joint distribution as a sum of independent variables any more.  Nevertheless, it is still possible to use the description of $\Sigma \cap J$ as a mixture of finite rank determinantal processes, combined with more refined estimates on sums of independent Bernoulli variables, to obtain satisfactory upper bounds on the distribution, which is enough to obtain the conclusions of Theorem \ref{gap-bounds}.

\subsection{Application to the GUE minor process}

Recall that $\lambda_1 < \dots < \lambda_N$ are the eigenvalues of an $N \times N$ GUE matrix $H$.  If we let $\lambda'_1 < \dots < \lambda'_{N-1}$ be the eigenvalues of the top left $N-1 \times N-1$ minor of this matrix (which is again a GUE matrix, but with $N$ replaced by $N-1$), then we almost surely have the Cauchy interlacing law
$$ \lambda_i < \lambda'_i < \lambda_{i+1}$$
for $1 \leq i < N$.  The combined process $\tilde \Sigma \subset \R \times \{N-1,N\}$ defined by
\begin{equation}\label{td-def}
  \tilde \Sigma \coloneqq \{ (\lambda_i, N): 1 \leq i \leq N\} \cup \{ (\lambda'_i, N-1): 1 \leq i \leq N-1\}
\end{equation}
forms two rows of the \emph{GUE minor process}.  Like the GUE eigenvalue process, this process is known to be determinantal \cite{adler}, \cite{johansson-nordenstam}; we describe the kernel for this process below in Section \ref{lowfreq-sec}.  Among other things, it was shown in \cite{adler} that under suitable rescaling, this kernel in the bulk converges to the kernel of a Boutillier bead process \cite{boutillier} (with a drift parameter determined by the location in the bulk).  Informally, this asserts that the (rescaled) GUE minor process converges to the Boutillier bead process in a \emph{fixed energy} sense.  

As an application of the above estimates, we may now also obtain a universal law in the \emph{fixed index} sense.  Define the \emph{interlacing gaps}
$$ \tilde g_i \coloneqq \sqrt{N/2} \rhosc(\gamma_{i/N}) (\lambda'_i-\lambda_i)$$
then we have $0 < \tilde g_i < g_i$.  Informally, the quantities $g_i,\dots,g_{i+m}, \tilde g_i, \dots, \tilde g_{i+m}$ then describe the behavior of the GUE minor process near the index $i$.

Our main result for two rows of the minor process is then

\begin{theorem}[Interlacing universality]\label{interlacing-univ} Let $i, j$ lie in the bulk region \eqref{bulk-def} for some fixed $\delta > 0$ with $i = j + o(N)$, let $m$ be fixed, and let $F: \R^{2m} \to \R$ be a fixed smooth, compactly supported function.  Then
  $$ \E F(g_i,\dots,g_{i+m}, \tilde g_i,\dots,\tilde g_{i+m}) = \E F(g_j,\dots,g_{j+m}, \tilde g_j,\dots,\tilde g_{j+m}) + o(1)$$
as $N \to \infty$.
\end{theorem}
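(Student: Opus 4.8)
The plan is to show that if $i/N \to \alpha$ for some $\alpha \in [\delta, 1-\delta]$, then the random vector $(g_i,\dots,g_{i+m},\tilde g_i,\dots,\tilde g_{i+m})$ converges in distribution to a limit law $\mu_\alpha$ depending on the sequence only through $\alpha$ — namely the joint law of the consecutive gaps and interlacing gaps of the Boutillier bead process $\mathcal{B}_{\theta(\gamma_\alpha)}$ \cite{boutillier} around a reference point of its top row, where $\theta(\cdot)$ is the location-dependent drift of \cite{adler}. Granting this, the theorem follows by a subsequence argument: every subsequence of $N$ has a further subsequence along which $i/N \to \alpha$, and then also $j/N = i/N - (i-j)/N \to \alpha$ by the hypothesis $i = j + o(N)$, so along that sub-subsequence both $\E F(g_i,\dots,\tilde g_{i+m})$ and $\E F(g_j,\dots,\tilde g_{j+m})$ tend to $\int F\, d\mu_\alpha$; since every subsequence contains such a sub-subsequence, the difference of the two expectations tends to $0$. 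Here one uses that $F$, being smooth and compactly supported, is bounded and continuous, and that $0 < \tilde g_{i+k} < g_{i+k}$ together with Theorem \ref{gap-bounds}(ii) makes the vector tight, so that convergence in distribution of the vector is all that is needed.

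To prove the distributional convergence I would first rescale, replacing $\tilde\Sigma$ by its image under translation by $-\sqrt{2N}\,\gamma_{i/N}$ and dilation by $\sqrt{N/2}\,\rhosc(\gamma_{i/N})$, and write $\Sigma, \Sigma'$ for the resulting top and bottom rows, which have mean density $1 + o(1)$ near the origin; then $g_{i+k}$ is the gap between the $(i+k)$-th and $(i+k+1)$-th points of $\Sigma$, and $\tilde g_{i+k}$ the distance from the $(i+k)$-th point of $\Sigma$ to the point of $\Sigma'$ just above it. By the rigidity estimate \eqref{rigidity}, Cauchy interlacing, and the local rigidity \eqref{imi-2}, on an event of probability $1-o(1)$ all of $\lambda_i,\dots,\lambda_{i+m+1},\lambda'_i,\dots,\lambda'_{i+m}$ lie, after rescaling, inside the window $I_N \coloneqq [-R_N, R_N]$ for any $R_N \to \infty$ with $R_N \gg \log N$, and I would fix such an $R_N$ growing slowly. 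Using the Palm formula for the determinantal process $\tilde\Sigma$, and writing $\rho_1$ for the one-point function of $\Sigma$ and $F_t$ for the value of $F$ computed from the points of $\Sigma \cup \Sigma'$ at and above a top-row point $t$, one then has
\[
  \E F(g_i,\dots,\tilde g_{i+m}) = \int_{I_N} \E\bigl[\, F_t\, \mathbf{1}_{\#(\Sigma \cap (-\infty,t)) = i-1} \bigm| t \in \Sigma \,\bigr]\, \rho_1(t)\, dt + o(1),
\]
where the $o(1)$ absorbs the contribution of $t \notin I_N$ (negligible by the preceding sentence since $\|F\|_\infty < \infty$) and of the event that some $\lambda_{i+k}$ or $\lambda'_{i+k}$ leaves the local region around $t$, which is controlled by \eqref{rigidity} and Theorem \ref{gap-bounds}(i),(iii).

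Two inputs then finish the argument. First, the Plancherel--Rotach asymptotics for the kernel of the GUE minor process (Section \ref{lowfreq-sec}, following \cite{adler}, \cite{johansson-nordenstam}) show that the Palm measure of $\tilde\Sigma$ at a top-row point $t \in I_N$ converges — uniformly in $t \in I_N$, since the corresponding spectral location stays within $o(1)$ of $\gamma_{i/N}$ in normalized coordinates — to the Palm measure of $\mathcal{B}_{\theta(\gamma_\alpha)}$ at a top-row point; consequently $F_t$ converges in law to $F$ applied to the gaps and interlacing gaps of $\mathcal{B}_{\theta(\gamma_\alpha)}$ around a reference point, and by translation invariance this limit is the same for all $t$. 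Second, the counting constraint decouples from the local statistic: for a fixed large constant $R$ split $\#(\Sigma \cap (-\infty,t)) = \#(\Sigma \cap (-\infty,t-R)) + \#(\Sigma \cap [t-R,t))$, realise the macroscopic term $\#(\Sigma \cap (-\infty,t-R))$ as a sum of independent Bernoulli variables \cite{hough}, and note that this sum has variance of order $\log N$ (the logarithmic variance of \cite{gustavsson}) while the kernel of $\tilde\Sigma$ decays away from the diagonal; a local central limit estimate for such Bernoulli sums — the same circle of ideas used in Section \ref{rotarch-sec} to prove Theorem \ref{gap-bounds} — then shows that, conditionally on the local configuration of $\Sigma \cup \Sigma'$ near $t$ (which determines both $F_t$ and the bounded term $\#(\Sigma \cap [t-R,t))$), the probability $\P(\#(\Sigma \cap (-\infty,t)) = i-1 \mid t \in \Sigma,\ \text{local config})$ equals, up to an $o(1)$ relative error, a quantity $c(t)$ not depending on the local configuration, and moreover $\int_{I_N} c(t)\, \rho_1(t)\, dt = 1 + o(1)$ (since with probability $1-o(1)$ exactly one point of $\Sigma$ in $I_N$ has index $i$). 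Substituting and using the first input uniformly in $t$ gives
\[
  \E F(g_i,\dots,\tilde g_{i+m}) = \int_{I_N} \E[\, F_t \mid t \in \Sigma \,]\, c(t)\, \rho_1(t)\, dt + o(1) \ \longrightarrow\ \int F\, d\mu_\alpha,
\]
as required.

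The hard part is the decoupling step. The count $\#(\Sigma \cap (-\infty,t))$ is a macroscopic functional, the two rows of $\tilde\Sigma$ are genuinely correlated, and the GUE correlations decay only polynomially, so one must show that the far part of this count is, to leading order, independent of the local configuration near $t$ while remaining spread out at unit scale — precisely the difficulty (control of the joint law of two nested eigenvalue counts) flagged in the introduction and met in Section \ref{rotarch-sec}, via the description of $\Sigma$ restricted to an interval as a mixture of finite-rank determinantal processes together with quantitative central limit theorems for sums of independent Bernoulli variables. Relative to Theorem \ref{gap-bounds}, the only genuinely new feature is that the local configuration now also carries the bottom-row points, whose joint limiting law is the one supplied by \cite{adler}; one must also invoke the Plancherel--Rotach asymptotics with enough uniformity in the bulk location to accommodate the slowly growing window $I_N$, the tail bounds of Theorem \ref{gap-bounds}(i),(iii) being what license the truncations that discard configurations with anomalously large or small gaps.
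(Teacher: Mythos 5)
Your approach is genuinely different from the paper's. The paper does not try to prove convergence of the gap vector to a bead-process law at all: it applies Schur's complement to write each $\lambda_i$ as a solution of the equation \eqref{ani} involving the minor spectrum $\{\lambda'_j\}$ and independent gaussians $X_j$, uses Theorem~\ref{gap-bounds} to localize that equation to a window of size $m_0$ in $j$, solves for $\tilde g_i$ as a smooth function of the nearby minor gaps $g'_j$ and gaussians $X_j$, and then cites the Erd\H{o}s--Yau fixed-index universality \eqref{universal} for the pure minor gaps to conclude. Your plan instead bypasses \eqref{universal} entirely and tries to prove the stronger statement that $(g_i,\dots,\tilde g_{i+m})$ converges in law to a fixed limit, by rescaling the determinantal kernel of the two-row minor process and controlling the counting constraint $\#(\Sigma \cap (-\infty,t))=i-1$ directly.

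The gap is exactly where you say the hard part is, and the phrase ``the same circle of ideas used in Section~\ref{rotarch-sec}'' overstates what those ideas deliver. The machinery of Section~4 (Theorem~\ref{det-est}, Corollary~\ref{det-cor}) decomposes $\Sigma\cap J$ as a mixture of finite-rank determinantal processes and extracts a factor $A^{-1/2}$ from the constraint $\#(\Sigma\cap J)=n$; but all of those results are one-sided \emph{upper bounds} on joint events such as $\{\#(\Sigma\cap J)=n\}\wedge\{\#(\Sigma\cap I)=0\}$, which is precisely what is needed to bound gap moments and prove Theorem~\ref{gap-bounds}. What your decoupling step requires is an \emph{equality} up to relative $o(1)$ error:
\[
\P\bigl(\#(\Sigma\cap(-\infty,t))=i-1 \bigm| t\in\Sigma,\ \text{local config near }t\bigr) = c(t)\,(1+o(1)),
\]
with $c(t)$ independent of the local configuration and uniformity in $t$ over a window of slowly growing length $R_N$. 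That is an asymptotic conditional local limit theorem, not an upper bound, and nothing in Lemma~\ref{lem-xi} or Corollary~\ref{det-cor} produces it. Worse, the ``local configuration'' you condition on involves not just $\Sigma$ in a window but also the bottom-row points of $\Sigma'$, and the minor process $\tilde\Sigma$ does not fit the single-interval, single-row framework of Section~4 (in particular the off-diagonal block $\tilde K_{N,N-1}$ is not even a symmetric kernel). Converting the fixed-energy convergence of \cite{adler} to a fixed-index one is exactly the difficulty the paper flags in the introduction and deliberately sidesteps by using Schur's complement plus the already-known fixed-index result \eqref{universal} for eigenvalue gaps. As written, your proposal asserts the decoupling rather than proving it, and the remaining steps depend essentially on it, so the argument is incomplete. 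If you want to keep your route, you would need to prove a quantitative conditional local CLT for the count $\#(\Sigma\cap(-\infty,t))$ with multiplicative errors uniform over the local two-row configuration, which is a substantial new piece of work and would in effect be a stronger theorem than the one stated.
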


In principle, we may now average in the $i$ parameter and derive fixed index limiting laws for such statistics as the interlacing gaps $\tilde g_i$ in terms of corresponding statistics for the bead process, although a precise formalization of this principle is tricky due to the lack of a natural definition of ``fixed index'' for the bead process (which has an infinite number of elements).

We prove Theorem \ref{interlacing-univ} in Section \ref{interlacing-sec}.  The strategy is to leverage the existing universality result \eqref{smash}, which for instance would establish Theorem \ref{interlacing-univ} if $F$ depended only on the minor gaps $\tilde g_i,\dots,\tilde g_{i+m}$ and not on the eigenvalue gaps $g_i,\dots,g_{i+m}$.  To recover the full strength of Theorem \ref{interlacing-univ}, we use Schur's complement to obtain a probabilistic equation relating the latter gaps to the former; however, initially this equation involves an unbounded number of minor gaps $\tilde g_j$ rather than a bounded number.  However, the various bounds in Theorem \ref{gap-bounds}, together with routine Taylor approximation, can be used to largely eliminate the influence of all but a bounded number of these minor gaps, effectively replacing allowing us to approximate the expectation $\E F(g_i,\dots,g_{i+m}, \tilde g_i,\dots,\tilde g_{i+m})$ to acceptable error with an expectation $\E F'(\tilde g_{i-m'},\dots,\tilde g_{i+m'})$ for some suitable $m' > m$ and some suitable test function $F'$, at which point one can apply \eqref{smash}.

The above result was for two rows of the minor process, but an adaptation of the argument also applies to any fixed number of rows of the minor process as follows.  For any $1 \leq M \leq N$, and let $\lambda^{(M)}_1 < \dots < \lambda^{(M)}_{M}$ be the eigenvalues of the top left $N-k \times N-k$ minors of $H$, thus in the previous notation we have $\lambda^{(N)}_i = \lambda_i$ and $\lambda^{(N-1)}_i = \lambda'_i$.  We then have the normalized eigenvalue gaps
$$ g^{(M)}_i \coloneqq \sqrt{M/2} \rhosc(\gamma_{i/M}) (\lambda^{(M)}_{i+1}-\lambda^{(M)}_i)$$
as well as the interlacing gaps
$$ \tilde g^{(M)}_i \coloneqq \sqrt{M/2} \rhosc(\gamma_{i/M}) (\lambda^{(M-1)}_{i}-\lambda^{(M)}_i)$$
for any $1 \leq i < M \leq N$.
Thus, for instance, $g^{(N)}_i = g_i$ and $\tilde g^{(N)}_i = g'_i$.  Also, for any fixed $m$ and any $1 \leq i \leq i+m+1 \leq N$, $g_i,\dots,g_{i+m},\tilde g_i,\dots,\tilde g_m$ can be expressed as a linear combination (with bounded coefficients) of the $g^{(M)}_j$ for $M=N-1,N$ and $i \leq j \leq i+m$, together with a single interlacing gap $\tilde g^{(N)}_i$. We can then generalize Theorem \ref{interlacing-univ} to

\begin{theorem}[Interlacing universality, II]\label{interlacing-univ-2} Let $i, j$ lie in the bulk region \eqref{bulk-def} for some fixed $\delta > 0$ with $i = j + o(N)$, let $k, m \geq 1$ be fixed, and let $F: \R^{(m+1)k + k-1} \to \R$ be a fixed smooth, compactly supported function.  Then
  $$ \E F( (g^{(M)}_{i'})_{N-k < M \leq N; i \leq i' \leq i+m}, (\tilde g^{(M)}_i)_{N-k+1 < M \leq N} ) = \E F( (g^{(M)}_{j'})_{N-k < M \leq N; j \leq j' \leq j+m}, (\tilde g^{(M)}_j)_{N-k+1 < M \leq N} ) + o(1)$$
as $N \to \infty$.
\end{theorem}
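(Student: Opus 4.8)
The plan is to induct on the number of rows $k$, using the case $k=1$ (which is exactly \eqref{smash}) as the base case and Theorem \ref{interlacing-univ} (the case $k=2$) as the template for the inductive step. The key structural observation is that the rows $M = N-k+1,\dots,N-1$ of the GUE minor process of $H$ are precisely the first $k-1$ rows of the GUE minor process of the $(N-1)\times(N-1)$ GUE matrix $H'$ formed by the top-left minor of $H$. Writing the target observable as a smooth, compactly supported function of the ``top-row data'' $g^{(N)}_i,\dots,g^{(N)}_{i+m},\tilde g^{(N)}_i$ and the ``lower-row data'' $\{g^{(M)}_{i'},\tilde g^{(M)}_i: M\le N-1\}$, the goal of the inductive step is to integrate out the top-row data conditionally on $H'$ and to check that what remains is, up to $o(1)$ error, an admissible observable of the $(k-1)$-row minor process of $H'$, to which the inductive hypothesis applies. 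For the bookkeeping to close I would first record the routine equivalence of Theorem \ref{interlacing-univ-2} with the variant in which each row $M$ carries its own fixed-size window of indices rather than the common window $\{i,\dots,i+m\}$; passing between the two costs only $o(1)$ by the truncation afforded by Theorem \ref{gap-bounds}(i) (multiply the test function by a smooth cutoff that is $1$ on a large box, using that all gaps involved are $O(1)$ with probability $1-o(1)$).

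The heart of the inductive step is the conditional Schur-complement reduction, which is the same mechanism already used to prove Theorem \ref{interlacing-univ}. Writing $H = \begin{pmatrix} H' & v \\ v^{*} & a \end{pmatrix}$, the eigenvalues $\lambda^{(N)}_j$ of $H$ solve the secular equation $a - \lambda^{(N)}_j = \sum_{l=1}^{N-1}\xi_l/(\lambda^{(N-1)}_l - \lambda^{(N)}_j)$, where $a\sim N(0,1/2)_\R$ and the $\xi_l = |\langle v,u^{(N-1)}_l\rangle|^2$ (with $u^{(N-1)}_l$ the eigenvectors of $H'$) are, by unitary invariance of the Gaussian vector $v$, i.i.d. rescaled exponential random variables \emph{independent of $H'$}, hence of all the lower-row data. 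Splitting the sum into the indices $l$ with $|l-i|\le m'$ and the complementary tail, the moment and local-rigidity bounds of Theorem \ref{gap-bounds}, together with the law of large numbers for the $\xi_l$ and classical rigidity \eqref{rigidity} for $H'$, show that the tail contributes a term that is slowly varying across the $O(m')$ relevant eigenvalues near index $i$, so that it is effectively a single $z$-independent quantity that may be absorbed; this is precisely where the logarithm-free estimates of Theorem \ref{gap-bounds} are needed in place of \eqref{rigidity}. Consequently $g^{(N)}_{i'}$ ($i\le i'\le i+m$) and $\tilde g^{(N)}_i$ agree, up to errors that are $o(1)$ in expectation against any fixed smooth compactly supported test function, with explicit smooth functions of the windowed lower-row gaps $g^{(N-1)}_{i-m'},\dots,g^{(N-1)}_{i+m'}$, the windowed auxiliary variables $\xi_{i-m'},\dots,\xi_{i+m'}$, and $a$. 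Taking the conditional expectation over $(a,\xi_{i-m'},\dots,\xi_{i+m'})$, which is independent of $H'$, converts the target into $\E F'((g^{(M)}_{i'})_{M\le N-1},(\tilde g^{(M)}_i)_{M\le N-1}) + o(1)$ for a new test function $F'$ whose $M=N-1$ window has been enlarged to $\{i-m',\dots,i+m'\}$; $F'$ is smooth by differentiation under the integral sign (the secular-equation solutions depend smoothly on the $\lambda^{(N-1)}_l$ and $\xi_l$ almost surely) and compactly supported after a further application of Theorem \ref{gap-bounds}(i). The inductive hypothesis applied to the $(k-1)$-row minor process of $H'$ (legitimate since $i = j + o(N)$ and, after shrinking $\delta$ by an amount depending only on $k$, both indices remain in the bulk of $N-1$) then finishes the step, and the error accumulated over the $k$ fixed peeling steps stays $o(1)$.

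The main obstacle, as in the proof of Theorem \ref{interlacing-univ} itself, is the control of this ``macroscopic'' tail term: one must show that, conditionally on $H'$ and the far-away $\xi_l$, the quantity $\sum_{|l-i|>m'}\xi_l/(\lambda^{(N-1)}_l - z)$, as $z$ ranges over the $O(m')$ eigenvalues $\lambda^{(N)}_{i'},\lambda^{(N-1)}_{i'}$ near index $i$, can be replaced --- with an error made arbitrarily small uniformly in $N$ by taking $m'$ large --- by one $z$-independent number, and that the residual dependence of that number on $H'$ is through macroscopic, universal data only, so that it matches between the $i$- and $j$-localizations once $i = j+o(N)$. Everything else (the per-row-window reformulation, the smoothness check via differentiation under the integral sign, and the various $o(1)$ truncations) is routine given Theorem \ref{gap-bounds} and \eqref{smash}.
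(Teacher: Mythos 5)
Your proposal is correct and follows essentially the same route as the paper: both rest on the Schur-complement reduction \eqref{tgij} from Theorem \ref{interlacing-univ} to express each row's gaps as a smooth, compactly supported function of the next row's gaps in a window of width $m'$ plus auxiliary Gaussians independent of the minor, then condition out the auxiliary variables and invoke the one-row universality. The only cosmetic difference is that you package the peeling as an induction on $k$ using the $(N-1)\times(N-1)$ minor process, whereas the paper composes all $k-1$ functional relations $G_1,\dots,G_{k-1}$ at once (with $m_1\le\dots\le m_{k-1}$ chosen increasingly) before applying \eqref{universal} at the bottom row; unrolling your induction yields exactly that composition.
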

The bound \eqref{universal} (with the polynomial error term replaced by the qualitative error $o(1)$) is then equivalent to the $k=0$ case of Theorem \ref{interlacing-univ-2}, while Theorem \ref{interlacing-univ} is equivalent to the $k=1$ case, as can be seen by applying an appropriate linear transformation.

In principle, the above universality results allow one to control various fixed index statistics, by replacing them with fixed energy analogues which are more computable, though in practice the calculations can get rather involved.  As an application of this type, we obtain some non-trivial reduction in variance of linear statistics of interlacing gaps:

\begin{theorem}[Interlacing statistics variance]\label{thm-stats} Let $i$ lie in the bulk region \eqref{bulk-def} for some fixed $\delta > 0$, let $m \geq 1$ be fixed, and let $a_1,\dots,a_m$ be complex numbers.  Then
$$ \Var \sum_{l=1}^m a_l \tilde g_{i+l} \ll_{\delta,A} \left(\frac{m}{\log^A(2+m)}+o(1)\right) \sum_{l=1}^m |a_l|^2$$
for any $A>0$.
\end{theorem}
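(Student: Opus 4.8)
\emph{Overall strategy.} The plan is to use a Schur complement computation to express each interlacing gap in terms of the ordinary eigenvalue gaps plus an independent ``eigenvector overlap'' noise, to dispose of the noise trivially, and then to reduce to an operator--norm bound for the covariance form of the gaps $g_{i+l}$, which I would prove by splitting the coefficient vector into a ``coarse'' and a ``fine'' part, using the local rigidity \eqref{imi-2} on coarse scales and the universality \eqref{smash} on fine scales. Concretely, write $H$ in block form with top left $(N-1)\times(N-1)$ block $H'$ and last column $(\xi,h_{NN})$. Then the normalized minor eigenvalue $\lambda'_{i+l}$ is the unique root in $(\lambda_{i+l},\lambda_{i+l+1})$ of the secular equation $\sum_j b_j/(\lambda_j-\lambda)=0$, where $b_j:=|\langle u_j,e_N\rangle|^2$ are the squared last coordinates of the eigenvectors of $H$; classically $(Nb_1,\dots,Nb_N)$ is a Dirichlet vector --- close to a vector of i.i.d.\ $\mathrm{Exp}(1)$ variables --- and is independent of the eigenvalues. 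Isolating the poles at $\lambda_{i+l}$ and $\lambda_{i+l+1}$ and rescaling shows that $\tilde g_{i+l}/g_{i+l}\in(0,1)$ solves an explicit equation in the two overlaps $Nb_{i+l},Nb_{i+l+1}$, in the gap $g_{i+l}$, and in a ``local drift'' whose deterministic part is the (rescaled) real Stieltjes transform of the semicircle law at the relevant point of the bulk, and whose fluctuation --- together with the rescaling errors --- is $L^2$--small and measurable with respect to a bounded neighbourhood of the index, by the lower tail bound (iii) (to exclude anomalously small neighbouring gaps), the concentration and moment bounds (i),(ii) (to estimate the tail of the secular sum), and the local rigidity (iv) (to replace the drift by its deterministic value at a locally supported, $L^2$--small cost). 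Averaging over the two overlaps gives a decomposition $\tilde g_{i+l}=\psi(g_{i+l})+\eta_{i+l}+\zeta_{i+l}$, where $\psi$ is a fixed Lipschitz function with $0\le\psi(g)\le g$ (so that $\phi:=\psi-\kappa\,\mathrm{id}$ is bounded, $\kappa:=\lim_{g\to\infty}\psi(g)/g\in[0,1]$), where $\eta_{i+l}=\eta(g_{i+l},Nb_{i+l},Nb_{i+l+1})$ satisfies $\E[\eta_{i+l}\mid(\lambda_j)_j]=0$, and where $\zeta_{i+l}$ is the $L^2$--small, locally supported remainder.

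\emph{Disposing of the noise and reduction.} Since the overlaps are (essentially) independent of each other and of the eigenvalues, $\eta_{i+l}$ and $\eta_{i+l'}$ are conditionally uncorrelated given the eigenvalues whenever $|l-l'|\ge 2$, and on conditioning the cross term of $\sum_l a_l\eta_{i+l}$ with any eigenvalue--measurable variable vanishes; with $\E\eta_{i+l}^2\ll\E g_{i+l}^2\ll 1$ from (ii) this yields $\Var\sum_l a_l\eta_{i+l}\ll\sum_l|a_l|^2$, and the $L^2$--small, short--range--correlated remainder similarly contributes $\ll\sum_l|a_l|^2$ (indeed $o(1)\sum_l|a_l|^2$ after a truncation). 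The cross terms between the three pieces are handled likewise by conditioning. It therefore suffices to bound $\Var\sum_l a_l g_{i+l}$ (for the $\kappa\,\mathrm{id}$ part of $\psi$, using $\kappa\le 1$) and $\Var\sum_l a_l\phi(g_{i+l})$ (for the bounded Lipschitz $\phi$) by $O_{\delta,A}\bigl((\tfrac{m}{\log^A(2+m)}+o(1))\sum_l|a_l|^2\bigr)$. To this end fix $L$ equal to a suitably large power of $\log(2+m)$, partition $\{i+1,\dots,i+m\}$ into $\lceil m/L\rceil$ consecutive blocks $B$, and split $a=a^\flat+a^\sharp$, with $a^\flat$ the blockwise average of $a$ and $a^\sharp=a-a^\flat$ of vanishing average on each block.

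\emph{Coarse and fine parts.} For the coarse part, $\sum_l a^\flat_l g_{i+l}=\sum_B(\mathrm{avg}_B a)\,S_B$ with $S_B=\sum_{l\in B}g_{i+l}$: by \eqref{imi-2} each $\Var S_B\ll\log^{O(1)}L$, so Cauchy--Schwarz over the $\lceil m/L\rceil$ blocks together with $\sum_B|\mathrm{avg}_B a|^2\le L^{-1}\|a\|^2$ gives $\ll\tfrac{m}{L^2}\log^{O(1)}(L)\,\|a\|^2$, which is $\ll\tfrac{m}{\log^A(2+m)}\|a\|^2$ once $L$ is a large enough power of $\log(2+m)$; the $\phi$--variant runs the same way, with the bound $\Var(\sum_{l\in B}\phi(g_{i+l}))\ll L\log^{O(1)}L$ now coming from \eqref{smash} and the short--range correlation of bounded observables of the gap sequence of the translation--invariant (Gaudin) limit. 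For the fine part, write $\sum_l a^\sharp_l g_{i+l}=\sum_B\Sigma_B$ with $\Sigma_B=\sum_{l\in B}a^\sharp_l g_{i+l}$: the diagonal terms satisfy $\Var\Sigma_B\ll(1+o(1))\sum_{l\in B}|a^\sharp_l|^2$ by \eqref{smash} (on a single block, of the bounded--from--that--block's--viewpoint length $L$, the law of $(g_{i+l})_{l\in B}$ matches the translation--invariant limit, whose gap sequence has bounded spectral density), hence sum to $\ll(1+o(1))\|a^\sharp\|^2$; and since $a^\sharp$ has vanishing block averages, each off--diagonal term $\Cov(\Sigma_B,\Sigma_{B'})$ is a mixed second difference of the (approximately stationary) autocovariance $\Cov(g_i,g_{i+k})$ at lags $\asymp\mathrm{dist}(B,B')$, so a Schur test over pairs of blocks, using the decay of that autocovariance, bounds their total by $\ll(1+o(1))\|a^\sharp\|^2$ as well. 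Since $\|a^\sharp\|\le\|a\|$, and since $\sup_{m\ge 1}\log^A(2+m)/m<\infty$ while the implied constant is allowed to depend on $A$, a bound of the form $O_\delta\bigl((1+o(1))\|a\|^2\bigr)$ is subsumed in the claimed $O_{\delta,A}\bigl((\tfrac{m}{\log^A(2+m)}+o(1))\|a\|^2\bigr)$; adding the coarse and fine parts (and running the identical argument with $g$ replaced by $\phi(g)$) completes the proof.

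\emph{Main obstacle.} The delicate step is the off--diagonal estimate for the fine part: it requires quantitative decay of the gap autocovariance $\Cov(g_i,g_{i+k})$, uniformly for lags $k$ up to $\log^{O(1)}N$, whereas Theorem \ref{gap-bounds} supplies this only in the averaged form \eqref{imi-2} and \eqref{smash} supplies it only qualitatively. Balancing these two inputs --- spending the averaged rigidity on the coarse scales and the qualitative universality at a fixed, $N$--independent fine scale $L$ --- is exactly what forces the extra power of $\log(2+m)$ in the statement rather than the conjecturally optimal $O(1)$, and ensuring the $N$--uniformity of all the $o(1)$ errors across the $\lceil m/L\rceil$ blocks (together with the control, via (iv), of the Schur--complement remainder $\zeta$) is the main bookkeeping burden.
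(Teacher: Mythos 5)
Your overall architecture — split the coefficient vector into a blockwise-constant ``coarse'' part handled by the local rigidity bound \eqref{imi-2} and a blockwise mean-zero ``fine'' part handled by universality — agrees at the top level with the paper's first reduction step (the paper likewise partitions $\{1,\dots,m\}$ into $\asymp m/\log^{A/c}(2+m)$ intervals and splits $a$ into piecewise-constant and locally mean-zero pieces). Your analysis of the coarse part is correct and essentially matches. But there are two genuine gaps in the remainder.

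First, the decomposition $\tilde g_{i+l}=\psi(g_{i+l})+\eta_{i+l}+\zeta_{i+l}$ with a \emph{fixed} Lipschitz function $\psi$ and an $L^2$-small, locally supported $\zeta_{i+l}$ is not justified. The secular equation for $\lambda'_{i+l}$ couples it to the \emph{entire} eigenvalue/overlap configuration, and the paper's own Schur-complement analysis (Section~\ref{interlacing-sec}) shows precisely that after truncating and Taylor-expanding, $\tilde g_i$ can only be approximated by a function $G(\cdot)$ of a window of $2m_0+1$ gaps and overlaps, with an error $O(m_0^{-0.2})+o(1)$ that requires $m_0\to\infty$. Collapsing the window to a single gap $g_{i+l}$ and a pair of overlaps as you do leaves an $O(1)$ error, not an $L^2$-small one, so the reduction to $\Var\sum_l a_l g_{i+l}$ does not go through as stated.

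Second, and more fundamentally, the off-diagonal estimate for the fine part is not actually proved: you invoke ``the decay of the autocovariance $\Cov(g_i,g_{i+k})$'' to run a Schur test across blocks, but no such quantitative decay is available from the inputs you cite. The smash estimate \eqref{smash} is purely qualitative (error $o(1)$ with no rate), \eqref{imi-2} only controls averaged sums, and the covariance estimate quoted in the introduction from \cite{cipolloni} only kicks in at separations $|i-j|\geq N^{c_0}$, far beyond the range $k\leq m=\log^{O(1)}N$ you need. You acknowledge this as the ``main obstacle'' but frame it as bookkeeping; in fact it is exactly the missing ingredient. The paper bridges this gap by an entirely different mechanism: it takes a Fourier transform in the index (Plancherel), which converts the summation-by-parts on the mean-zero block into an explicit factor $|1-e(\xi)|^2$, passes from fixed index to fixed energy via \eqref{smash}, and then computes $\E\,e(\xi N_{x,0,h})$ directly from the determinantal structure (independence of the Bernoulli decomposition at high frequency, Taylor expansion plus delicate combinatorics of correlation-function sums at moderate frequency). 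This sidesteps any need for pointwise autocovariance bounds. Separately, the paper handles the blockwise-constant part not by \eqref{imi-2} alone on interlacing gaps but by a dedicated low-frequency variance bound for $\int_0^m F(s)\,ds$ (where $F$ is a normalized indicator of the interlacing intervals), which is computed directly from the Johansson--Nordenstam kernel for the minor process via Plancherel--Rotach asymptotics and Hermite raising/lowering identities. These two determinantal computations are the real content of the theorem, and your proposal does not supply a substitute for them.
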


Note that a direct application of Theorem \ref{gap-bounds}(ii), combined with the triangle inequality and the Cauchy--Schwarz inequality, yields the weaker bound
$$ \Var \sum_{l=1}^m a_l \tilde g_{i+l} \ll m \sum_{l=1}^m |a_l|^2.$$
The key point is then the asymptotic improvement of the bound by an arbitrary power of $\log(2+m)$.

Theorem \ref{thm-stats} will be used in forthcoming work of Narayanan \cite{narayanan-limit} to obtain limit laws for random hives with GUE boundary 
data.

The proof of Theorem \ref{thm-stats} is rather involved, and occupies Section \ref{variance-sec}.  The first step is to use the triangle inequality to reduce to establishing a ``low frequency estimate''
\begin{equation}\label{low-eq} 
  \Var \sum_{l=1}^m \tilde g_{i+l} \ll m^{2-c} + o(1)
\end{equation}
and a ``high frequency estimate''
\begin{equation}\label{high-eq}
  \Var \sum_{l=1}^m a_l (\tilde g_{i+l} - \tilde g_{i+l-1}) \ll_{\delta,A} \left(\frac{m}{\log^A(2+m)}+o(1)\right) \sum_{l=1}^m |a_l|^2.
\end{equation}
We focus first on the high frequency estimate \eqref{high-eq}.  By a further application of the triangle inequality, we can pass from interlacing gaps to eigenvalue gaps, reducing to establishing a bound of the form
$$ \Var \sum_{l=1}^m a_l (g_{i+l} - g_{i+l-1}) \ll_{\delta,A} \left(\frac{m}{\log^A(2+m)}+o(1)\right) \sum_{l=1}^m |a_l|^2.$$
By some Fourier analytic manipulations, it then suffices to obtain a bound of the form
$$ \E |\sum_{l=1}^m e(\xi l) g_{i+l}|^2 \ll_{\delta,A} \frac{m^2}{\log^A(2+m)} + o(1)$$
for various frequencies $\xi$ that are not extremely close to zero.  Using the universality result in \eqref{smash}, one can pass from this fixed index estimate to a fixed energy estimate, eventually reducing to establish a bound of the form
$$ \int_0^m (m-h) \E e(\xi N_{x,0,h})\ dh \ll_{\delta,A} \frac{m^2}{\log^A(2+m)} + o(1)$$
where $N_{x,0,h}$ is the number of eigenvalues in a certain interval of length $\frac{h}{N \rhosc(x)}$ (so that the expectation of $N_{x,9,h}$ is roughly $h$).  When the frequency $\xi$ is large, one can bound each expectation $\E e(\xi N_{x,0,h})$ separately, using the fact that $N_{x,0,h}$ is distributed as the sum of independent Bernoulli variables.  However, this method does not work well when $\xi$ is small.  In this case, we instead perform a Taylor expansion of the exponential and end up having to control expressions of the form
$$ \int \psi(h/m) \E e(\xi h) \binom{N_{x,0,h}}{j}\ dh$$
for fixed $j$ and some smooth compactly supported $\psi$.  This expression can be approximated using Plancherel--Rotach asymptotics and Fourier expansion, and requires some delicate combinatorial analysis of the different linear combinations of frequencies that arise from that expansion, in order to exploit cancellation from the $e(\xi h)$ phase factor.

For the low frequency estimate \eqref{low-eq}.  Here, we can use the universality result from Theorem \ref{interlacing-univ} to replace this fixed index estimate with a fixed energy estimate.  It turns out that the main task is to obtain a non-trivial bound on 
$$ \Var \frac{1}{m} \int_0^{m} F(s)\ ds$$
where $F$ is a normalized indicator function of the interlacing intervals $\bigcup_j [\lambda_j,\lambda'_j]$.  This can be done by using the determinantal kernel for the minor process, together with Plancherel--Rotach asymptotics and classical manipulations of Hermite polynomials.

\subsection{Acknowledgements}

The author is supported by NSF grant DMS-2347850.  We thank Hariharan Narayanan for suggesting this problem, and Jun Yin for helpful discussions.

\section{Notation}\label{notation-sec}

We use $o(1)$ to denote any quantity that goes to zero as the size $N$ of the GUE matrix goes to infinity (while holding ``fixed'' parameters such as $m$ constant). We also use the usual asymptotic notation $X = O(Y)$, $X \ll Y$, or $Y \gg X$ to denote the bound $|X| \leq CY$ for some constant $C$ and sufficiently large $N$; if we need $C$ to depend on additional parameters, we indicate this by subscripts, thus for instance $X \ll_\delta Y$ denotes the bound $|X| \leq C_\delta Y$ for some constant $C_\delta$ depending on $\delta$ and all sufficiently large $N$. We also write $X \asymp Y$ for $X \ll Y \ll X$ (with the same subscripting conventions).

We write $e(\theta) \coloneqq e^{2\pi \sqrt{-1} \theta}$ for any real $\theta$.

\section{Sums of independent Boolean variables}

As mentioned in the introduction, the random variable $\#(\Sigma \cap I)$ has the sum of independent Bernoulli variables.  For future reference, we therefore record some standard calculations regarding such sums.

\begin{lemma}[Sums of independent Boolean random variables]\label{lem-xi}  Let $\xi_i$ be a finite collection of independent Boolean variables, with each $\xi_i$ having mean $\E \xi_i = \lambda_i \in [0,1]$.

\begin{itemize}
  \item[(i)] $\sum_i \xi_i$ has mean $\sum_i \lambda_i$ and variance $\sum_i \lambda_i (1-\lambda_i)$.  Furthermore, we have the Bernstein inequality
  \begin{equation}\label{bern}
    \P\left( \left|\sum_i \xi_i - \sum_i \lambda_i\right| \geq t\right) \leq 2\exp\left( - c\min\left( \frac{t^2}{\sum_i \lambda_i (1-\lambda_i)}, t\right)\right)
  \end{equation}
  for every $t>0$ and an absolute constant $c>0$.
  \item[(ii)] One has $\P(\sum_i \xi_i = 0) \leq \exp( - \sum_i \lambda_i )$.
  \item[(iii)] One has $\P(\sum_i \xi_i > 1) \leq \frac{1}{2} ((\sum_i \lambda_i)^2 - \sum_i \lambda_i^2)$.
  \item[(iv)]  One has
  \begin{equation}\label{llt-1}
    \sup_n \P\left( \sum_i \xi_i = n \right) \ll \left(\sum_i \lambda_i\right)^{-1/2}.
  \end{equation}
  \item[(v)] More generally, if $S$ is a subset of the indices $i$, and $F((\xi_i)_{i \not \in S})$ is a non-negative random variable that depends only on the $\xi_i$ with $i \not \in S$, then
  \begin{equation}\label{llt-2}
     \sup_n \E 1_{\sum_i \xi_i = n} F((\xi_i)_{i \not \in S}) \ll \left(\sum_{i \in S} \lambda_i\right)^{-1/2} \E F((\xi_i)_{i \not \in S}).
  \end{equation}
\end{itemize}
\end{lemma}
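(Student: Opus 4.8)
The plan is to dispatch (i)--(iii) by routine moment and exponential-moment estimates and to derive (v) from (iv) by a short conditioning argument, so that the only step carrying genuine content is the local limit estimate (iv). For (i), the formulas for the mean and variance of $\sum_i \xi_i$ follow from linearity of expectation, independence, and $\Var \xi_i = \E\xi_i^2 - (\E\xi_i)^2 = \lambda_i(1-\lambda_i)$ (using $\xi_i^2 = \xi_i$); for the Bernstein inequality \eqref{bern} I would run the standard Chernoff argument on the centered variables $X_i \coloneqq \xi_i - \lambda_i$, which are independent, mean zero, bounded by $1$ in magnitude, and have $\E X_i^2 = \lambda_i(1-\lambda_i)$, so that a standard moment generating function bound for centered, independent, $[-1,1]$-valued variables, combined with Markov's inequality and optimization in $t>0$, yields \eqref{bern} with an absolute $c>0$.

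For (ii), one simply writes $\P(\sum_i\xi_i = 0) = \prod_i(1-\lambda_i) \leq \prod_i e^{-\lambda_i} = \exp(-\sum_i\lambda_i)$, using independence and $1-x\leq e^{-x}$. For (iii), on the event $\{\sum_i\xi_i > 1\}$ one has $\binom{\sum_i\xi_i}{2} \geq 1$, so Markov's inequality gives $\P(\sum_i\xi_i > 1) \leq \E\binom{\sum_i\xi_i}{2} = \sum_{i<j}\E\xi_i\xi_j = \sum_{i<j}\lambda_i\lambda_j = \tfrac12\bigl((\sum_i\lambda_i)^2 - \sum_i\lambda_i^2\bigr)$, where we used $\xi_i^2 = \xi_i$ and independence.

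The substantive part is (iv). Writing $S \coloneqq \sum_i\xi_i$ and $\phi_i(\theta) \coloneqq \E e(\theta\xi_i) = 1 - \lambda_i + \lambda_i e(\theta)$, Fourier inversion on the torus gives $\P(S=n) = \int_0^1 e(-n\theta) \prod_i \phi_i(\theta)\, d\theta$. A direct computation yields $|\phi_i(\theta)|^2 = 1 - 2\lambda_i(1-\lambda_i)(1-\cos 2\pi\theta)$, whence $\bigl|\prod_i\phi_i(\theta)\bigr| \leq \exp\bigl(-c(1-\cos 2\pi\theta)\sum_i\lambda_i(1-\lambda_i)\bigr)$ for an absolute $c>0$. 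Since $1-\cos 2\pi\theta \asymp \mathrm{dist}(\theta,\Z)^2$ for $\theta$ near $\Z$ and is bounded below by an absolute positive constant otherwise, integrating this Gaussian-type bound over $\theta\in[0,1]$ (the contribution away from $\Z$ being exponentially small) gives $\sup_n \P(S=n) \ll \bigl(\sum_i\lambda_i(1-\lambda_i)\bigr)^{-1/2}$; this yields \eqref{llt-1}, after first replacing each $\xi_i$ with $\lambda_i > 1/2$ by $1-\xi_i$, which leaves every point probability $\P(S=n)$ unchanged (up to relabelling $n$) and makes $\lambda_i(1-\lambda_i) \geq \lambda_i/2$.

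Finally, (v) follows from (iv) by conditioning on $(\xi_i)_{i\notin S}$: setting $A \coloneqq \sum_{i\in S}\xi_i$ and $B \coloneqq \sum_{i\notin S}\xi_i$, the variable $A$ is independent of the pair $(B,F)$, so for each $n$ one has $\E\bigl(1_{\sum_i\xi_i=n}F\bigr) = \E\bigl(F\cdot\P(A = n - B\mid B, F)\bigr) \leq \E(F)\cdot\sup_k\P(A=k) \ll \bigl(\sum_{i\in S}\lambda_i\bigr)^{-1/2}\,\E F$ by (iv) applied to the Bernoulli sum $A$ together with the non-negativity of $F$; taking the supremum over $n$ gives \eqref{llt-2}. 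Among these steps, (iv) is the main obstacle: (i)--(iii) are entirely standard and (v) is a one-line conditioning argument, whereas (iv) rests on the Fourier-analytic local limit estimate above, whose only delicacies are the efficient treatment of $\theta$ near the integers and the reduction to $\lambda_i\leq 1/2$.
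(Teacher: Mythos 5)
Your proof takes essentially the same approach as the paper's: (i)--(iii) by the standard mean/variance, product, and second-moment computations; (iv) by Fourier inversion on the circle combined with pointwise exponential bounds on each factor $|\E e(\theta\xi_i)|$; and (v) by conditioning on $(\xi_i)_{i\notin S}$ and invoking (iv).

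One remark on (iv), since you are slightly more careful here than the paper is and nearly flag the issue yourself. The exact identity $|1-\lambda_i+\lambda_i e(\theta)|^2 = 1 - 2\lambda_i(1-\lambda_i)(1-\cos 2\pi\theta)$ naturally yields the bound $\sup_n\P(S=n)\ll\left(\sum_i\lambda_i(1-\lambda_i)\right)^{-1/2}$, and your device of replacing $\xi_i$ by $1-\xi_i$ whenever $\lambda_i>1/2$ upgrades this only to $\left(\sum_i\min(\lambda_i,1-\lambda_i)\right)^{-1/2}$, not to the $\left(\sum_i\lambda_i\right)^{-1/2}$ asserted in \eqref{llt-1}: after the replacement the mean for those indices becomes $1-\lambda_i$, so $\sum_i\lambda_i$ does not reappear. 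Indeed \eqref{llt-1} as literally stated is false in degenerate cases (take every $\lambda_i=1$, so $S$ is deterministic). This mirrors a corresponding imprecision in the paper's own one-line proof, which quietly bounds $|\E e(\theta\xi_i)|\leq\exp(-c\lambda_i\|\theta\|^2)$ without the factor $(1-\lambda_i)$. The discrepancy is harmless in context: (iv)--(v) are only ever applied via a chosen index set $S$ on which $\lambda_i(1-\lambda_i)\asymp 1$ (the condition \eqref{lambda-s}), where $\lambda_i\asymp\lambda_i(1-\lambda_i)$ and the two versions of the bound agree up to constants.
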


  \begin{proof}
    The claims in (i) follow from linearity of expectation and additivity of variance for indepenent random variables, as well as the classical Bernstein inequality (see, e.g., \cite[Theorem 2.8.1]{vershynin}).  For (ii), the left-hand side can be explicitly written as $\prod_i (1-\lambda_i)$, and the claim follows since $0 \leq 1-\lambda_i \leq \exp(-\lambda_i)$.  For (iii), we bound
    \begin{align*}
      \P(\sum_i \xi_i > 1) &\leq \frac{1}{2} \E \left(\left(\sum_i \xi_i\right)^2 - \left(\sum_i \xi_i\right)\right) \\
      &= \frac{1}{2} \sum_{i,j: i \neq j} \lambda_i \lambda_j
    \end{align*}
    from which the claim follows.

    Now we prove (iv). By Fourier expansion, one has
      $$  \P( \sum_i \xi_i = n ) = \int_{\R/\Z} \E e\left(\sum_i \xi_i \theta\right) e^{-\theta n}\ d\theta$$
      and hence by the triangle inequality and independence
      $$  \P( \sum_i \xi_i = n ) = \int_{\R/\Z} \prod_i |\E e(\xi_i \theta)|\ d\theta.$$
      Direct calculation shows that
      $$ \E e(\xi_i \theta) \leq \exp( - c \lambda_i \|\theta\|^2)$$
      for some absolute constant $c>0$, where $\|\theta\|$ denotes the distance of $\theta$ to the nearest integer.  Inserting this bound, we obtain (iv).

      Finally, we prove (v).  Observe from (iv) (and shifting $n$ as needed) that if one conditions on all the $\xi_i$ with $\xi_i \not \in S$, then the conditional probability that $\sum_i \xi_i$ is equal to $n$ is $O(  (\sum_{i \in S} \lambda_i)^{-1/2})$.  The claim now follows from the law of total probability.
  \end{proof}

  \begin{remark} The estimate in Lemma \ref{lem-xi}(ii) is useful when the mean $\sum_i \lambda_i$ is large, while the estimate in Lemma \ref{lem-xi}(iii) is similarly useful when the mean $\sum_i \lambda_i$ is small.   Intuitively, the estimates in (iv), (v) assert that the sum of independent Boolean variables do not concentrate at any scale smaller than the standard deviation $(\sum_{i \in S} \lambda_i)^{-1/2}$ arising from some subset $S$ of indices, even when considering arbitrary weights involving the other $\xi_i$ with $i \not \in S$.
  \end{remark}

  \section{Determinantal process calculations}

  In this section we record some general determinantal process calculations that will be applied in the next section to the GUE eigenvalue process.

  Let $\Omega = (\Omega,\mu)$ be a locally compact Polish measure space equipped with a Radon measure $\mu$ (in practice, $\Omega$ will be a subset of $\R$ and $\mu$ will just be Lebesgue measure).  We define a \emph{determinantal kernel} to be a symmetric measurable function $K: \Omega \times \Omega \to \R$ such that the integral operator
  $$ P_K f(x) \coloneqq \int_{\Omega} K(x,y) f(y)\ dy$$
  is well-defined as a non-negative contraction on $L^2(\Omega)$ (note that the hypothesis that $K$ is bounded ensures that $P_K$ is locally trace class).  If furthermore $P_K^2 = P_K$, we say that the determinantal kernel is a \emph{projection kernel}.  We say that the determinantal kernel is \emph{finite rank} if the operator $P_K$ is finite rank.  By the spectral theorem, this is equivalent to having the representation
  \begin{equation}\label{kxy}
    K(x,y) = \sum_i \lambda_i \phi_i(x) \phi_i(y)
  \end{equation}
  for some finite index set $i$, some eigenvalues $\lambda_i \in [0,1]$, and some orthonormal functions $\phi_i \in L^2(\Omega)$.

  It is known (see e.g., \cite[Theorem 22]{hough}) that every kernel $K$ defines a determinantal point process $\Sigma$ on $\Omega$.  In the case of a finite rank kernel, we can view this point process by introducing independent Bernoulli random variables $\xi_i \in \{0,1\}$ with $\E \xi_i = \lambda_i$, and considering the random projection kernel
  $$ K_\xi(x,y) \coloneqq \sum_i \xi_i \phi_i(x) \phi_i(y).$$
  Then $K_\xi$ also determines a determinantal point process $\Sigma_\xi$ on $\Omega$, with a fixed cardinality
  \begin{equation}\label{sigma-card}
    \# \Sigma_\xi = \sum_i \xi_i,
  \end{equation}
  and $\Sigma$ is a mixture of the $\Sigma_\xi$, thus
  \begin{equation}\label{mixture}
     \P( \Sigma \in E ) = \E \P(\Sigma_\xi \in E | \xi)
  \end{equation}
  for any event $E$ on the space of point processes; see \cite[Theorem 7]{hough}.  In particular, we have
  $$ \# \Sigma \equiv \sum_i \xi_i.$$
  Applying Lemma \ref{lem-xi}, we conclude the expectation formula
  \begin{equation}\label{exp}
    \E \# \Sigma = \sum_i \lambda_i = \tr P_K,
  \end{equation}
  the variance formula
  \begin{equation}\label{var}
    \Var \# \Sigma = \sum_i \lambda_i (1-\lambda_i) = \tr P_K(1-P_K),
  \end{equation}
  the vanishing bound
  \begin{equation}\label{van-2}
    \P( \# \Sigma = 0 ) \leq \exp( - \sum_i \lambda_i ) = \exp(-\tr P_K),
  \end{equation}
  and the repulsion bound
  \begin{equation}\label{repulse}
    \P( \# \Sigma > 1 ) \leq \frac{1}{2} \left(( \sum_i \lambda_i )^2 - \sum_i \lambda_i^2\right) = \frac{1}{2} ((\tr(P_K))^2 - \tr P^2_K).
  \end{equation}
  In terms of the kernel $K$, we have
  \begin{equation}\label{exp-ker}
   \sum_i \lambda_i  = \tr(P_K) = \int_\Omega K(x,x)\ d\mu(x)
  \end{equation}
  and
  \begin{equation}\label{var-ker}
    \sum_i \lambda_i (1-\lambda_i) = \tr(P_K (1-P_K)) = \int_\Omega K(x,x)\ d\mu(x) - \int_\Omega \int_\Omega K(x,y)^2\ d\mu(x) d\mu(y).
  \end{equation}

  Now we study how such a process restricts to a measurable subset $I$ of $\Omega$.

  \begin{theorem}\label{det-est}  Let $K$ be a finite rank determinantal kernel on $\Omega$, and let $I$ be a measurable subset of $\Omega$. Let $1_I: L^2(\Omega) \to L^2(\Omega)$ be the restriction operator to $I$.  Define the quantities
    \begin{align}
      A &\coloneqq \Var \# \Sigma = \tr( P_K (1-P_K) ) \label{A-1} \\
      B &\coloneqq \Var \# (\Sigma \cap I) = \tr( 1_I P_K 1_I (1-P_K) ) \label{B-1}
    \end{align}
    and assume the lower bound
  \begin{equation}\label{A-2}
    \tr( P_K^2 (1-P_K^2) ) \gg A.
  \end{equation}
  We also assume $A$ to be non-zero. Let $n$ be a natural number.

  \begin{itemize}
    \item[(i)] We have
    $$ \P( \# \Sigma=n ) \ll A^{-1/2}.$$
    \item[(ii)] We have
    $$ \P( \# \Sigma=n \wedge \# (\Sigma \cap I) = 0) \ll A^{-1/2} \exp( - \tr( 1_I P_K) + O(B) ).$$
    \item[(iii)] We have
    $$ \P( \# \Sigma=n \wedge \# (\Sigma \cap I) > 1) \ll A^{-1/2} \tr( 1_I P_K)^2.$$
    \item[(iv)] We have
    $$ \E 1_{\# \Sigma=n} (\#(\Sigma \cap I) - \tr(1_I P_K))^2 \ll A^{-1/2} B (1+B).$$
  \end{itemize}
  \end{theorem}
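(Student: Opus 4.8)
The plan is to work throughout with the mixture representation \eqref{mixture}: writing $K = \sum_i \lambda_i\,\phi_i\otimes\phi_i$ as in \eqref{kxy} and choosing independent Boolean variables $\xi_i$ with $\E\xi_i = \lambda_i$, the process $\Sigma$ is a mixture of the projection determinantal processes $\Sigma_\xi$ with kernels $K_\xi = \sum_i \xi_i\,\phi_i\otimes\phi_i$, the crucial gain being that by \eqref{sigma-card} the count $\#\Sigma_\xi = \sum_i \xi_i$ is \emph{deterministic} given $\xi$. Abbreviate $v_i \coloneqq 1_I\phi_i$; then $1_I P_{K_\xi}1_I = \sum_i \xi_i\, v_i\otimes v_i$, $\tr(1_I P_K) = \sum_i \lambda_i\|v_i\|^2 = \E\#(\Sigma\cap I)$, and by \eqref{var-ker}, $B = \sum_i \lambda_i\|v_i\|^2 - \sum_{i,j}\lambda_i\lambda_j\langle v_i,v_j\rangle^2$. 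Part (i) is immediate, since $\sum_i\lambda_i \geq \sum_i\lambda_i(1-\lambda_i) = A$ forces $\P(\#\Sigma = n) = \P(\sum_i\xi_i = n) \ll (\sum_i\lambda_i)^{-1/2} \leq A^{-1/2}$ by Lemma \ref{lem-xi}(iv). For parts (ii)--(iv) we may assume $A$ exceeds any fixed constant, for otherwise $A^{-1/2} \gg 1$ and the claims follow from $\P(\#(\Sigma\cap I) = 0) \leq e^{-\tr(1_IP_K)}$ (\eqref{van-2}), $\P(\#(\Sigma\cap I) > 1) \leq \tfrac12\tr(1_IP_K)^2$ (\eqref{repulse}), and $\E(\#(\Sigma\cap I) - \tr(1_IP_K))^2 = B \leq B(1+B)$ for the unconditioned process. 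In the large-$A$ regime, the hypothesis \eqref{A-2} (together with $\lambda_k^2(1-\lambda_k^2) \leq \tfrac14$ for every $k$) guarantees that $\sum_{k \notin S}\lambda_k \gg A$ whenever $S$ is a set of at most two indices, which keeps the local limit estimates of Lemma \ref{lem-xi}(iv),(v) effective after we delete a bounded number of the $\xi_i$.

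For part (ii) I condition on $\xi$ and estimate $\P(\#(\Sigma_\xi \cap I) = 0 \mid \xi)$. Since $\Sigma_\xi$ is the determinantal process of the orthogonal projection onto $\operatorname{span}\{\phi_i : \xi_i = 1\}$, this conditional probability is the Gram determinant of the vectors $\{1_{\Omega \setminus I}\phi_i : \xi_i = 1\}$, which by Hadamard's inequality is at most $\prod_{i : \xi_i = 1}(1 - \|v_i\|^2)$ --- sharper than the bound $\exp(-\sum_{i:\xi_i=1}\|v_i\|^2)$ furnished by \eqref{van-2}, and, crucially, a product over $i$. Hence
$$\P\big(\#\Sigma = n \wedge \#(\Sigma \cap I) = 0\big) \;\leq\; \E\Big[\, 1_{\sum_i \xi_i = n}\,\prod_i (1-\|v_i\|^2)^{\xi_i} \Big],$$
and the product structure permits an exponential tilt of the Boolean law: the right-hand side equals $\big(\prod_i(1-\lambda_i\|v_i\|^2)\big)\,\widetilde\P\big(\sum_i \xi_i = n\big)$, where $\widetilde\P$ has independent Booleans with means $\widetilde\lambda_i \coloneqq \lambda_i(1-\|v_i\|^2)/(1-\lambda_i\|v_i\|^2)$. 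Now $\prod_i(1-\lambda_i\|v_i\|^2) \leq \exp(-\sum_i\lambda_i\|v_i\|^2) = \exp(-\tr(1_IP_K))$, while $\sum_i\widetilde\lambda_i \geq \sum_i\lambda_i(1-\|v_i\|^2) = \E\#(\Sigma\cap(\Omega\setminus I)) \geq \Var\#(\Sigma\cap(\Omega\setminus I)) \geq A - B$, the last step using that the counts on $I$ and on $\Omega\setminus I$ are negatively correlated. If $B \leq A/2$ this gives $\sum_i\widetilde\lambda_i \gg A$, so Lemma \ref{lem-xi}(iv) bounds $\widetilde\P(\sum_i\xi_i = n) \ll A^{-1/2}$ and (ii) follows; if $B > A/2$ then (for $A$ large) $A^{-1/2}e^{O(B)} \gg 1$, so (ii) already follows from $\P(\#(\Sigma\cap I) = 0) \leq e^{-\tr(1_IP_K)}$.

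For part (iii) I condition on $\xi$ and apply the repulsion bound \eqref{repulse} to $\Sigma_\xi \cap I$, giving $\P(\#(\Sigma_\xi\cap I) > 1 \mid \xi) \leq \tfrac12\big(\sum_i\xi_i\|v_i\|^2\big)^2 = \tfrac12\sum_{i,j}\xi_i\xi_j\|v_i\|^2\|v_j\|^2$; taking expectations and applying Lemma \ref{lem-xi}(v) to each $\E[1_{\sum_k\xi_k=n}\,\xi_i\xi_j]$ with $S$ the complement of $\{i,j\}$ gives $\E[1_{\sum_k\xi_k=n}\,\xi_i\xi_j] \ll A^{-1/2}\lambda_i\lambda_j$, and resumming yields $\ll A^{-1/2}\big(\sum_i\lambda_i\|v_i\|^2\big)^2 = A^{-1/2}\tr(1_IP_K)^2$. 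Part (iv) is the most involved. Writing $Y = \#(\Sigma\cap I)$, $\mu(\xi) = \E[Y\mid\xi] = \sum_i\xi_i\|v_i\|^2$ and $\bar\mu = \tr(1_IP_K)$, the conditional-variance identity yields the exact decomposition
$$\E\big[1_{\#\Sigma = n}(Y - \bar\mu)^2\big] \;=\; \E\big[1_{\sum_i\xi_i = n}(\mu(\xi)-\bar\mu)^2\big] + \E\big[1_{\sum_i\xi_i = n}\Var(Y\mid\xi)\big],$$
where $\Var(Y\mid\xi) = \sum_i\xi_i\|v_i\|^2(1-\|v_i\|^2) - \sum_{i\neq j}\xi_i\xi_j\langle v_i,v_j\rangle^2$ by \eqref{var-ker}. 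Each summand on the right expands into a linear combination of the monomials $1$, $\xi_i$, $\xi_i\xi_j$; applying Lemma \ref{lem-xi}(iv),(v) to each (using \eqref{A-2} to control the truncated sums) and resumming, the first term is $O(A^{-1/2}\Var\mu(\xi)) = O(A^{-1/2}B)$, and in the second term the off-diagonal contribution very nearly cancels the diagonal one, collapsing it to $O(A^{-1/2}\E\Var(Y\mid\xi))$ up to lower-order corrections, which gives $O(A^{-1/2}B(1+B))$ in all.

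The step I expect to be the main obstacle is the bookkeeping in these local limit estimates, especially for part (iv). A naive termwise application of Lemma \ref{lem-xi} only shows each $\E[1_{\sum_k\xi_k=n}\,\xi_i\xi_j]$ is $O(A^{-1/2}\lambda_i\lambda_j)$, and resumming $\Var(Y\mid\xi)$ on that basis produces merely $O(A^{-1/2}\tr(1_IP_K))$ --- useless, since $\tr(1_IP_K)$ can be arbitrarily large compared with $B$. To obtain $O(A^{-1/2}B(1+B))$ one must exploit that the local limit values $\P(\sum_{k\neq i}\xi_k = n-1)$ and $\P(\sum_{k\neq i,j}\xi_k = n-2)$ attached to the diagonal and off-diagonal monomials agree to within $O(1/A)$, so that after resummation the diagonal sum $\sum_i\lambda_i\|v_i\|^2(1-\|v_i\|^2)$ and the off-diagonal sum $\sum_{i\neq j}\lambda_i\lambda_j\langle v_i,v_j\rangle^2$ combine into $\E\Var(Y\mid\xi) = B - \Var\mu(\xi) \leq B$, with the discrepancy controlled by the second differences of these local limit probabilities; carrying out this cancellation, together with the analogous one in the expansion of $(\mu(\xi)-\bar\mu)^2$, is the technical heart of the argument.
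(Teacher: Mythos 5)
Parts (i)--(iii) are correct.  Part (i) is the same as the paper's after observing $\sum_i\lambda_i\geq A$.  Part (iii) matches the paper in structure but is cleaner: you apply Lemma \ref{lem-xi}(v) directly to each monomial $\xi_i\xi_j$ with $S$ the complement of $\{i,j\}$ and then resum; the uniform bound $\sum_{k\neq i,j}\lambda_k\gg A$ follows, as you say, from \eqref{A-2} together with $\lambda_k^2(1-\lambda_k^2)\leq 1/4$ and (for $A$ large) $\sum_k\lambda_k\geq\sum_k\lambda_k^2(1-\lambda_k^2)$, which is a small simplification over the paper's split of the double sum into $S$ and $S^c$ pieces.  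Part (ii) is genuinely different from the paper's and is a nice argument: rather than use the bound $\P(\#(\Sigma_\xi\cap I)=0\mid\xi)\leq\exp(-\sum_i\xi_i c_{ii})$ and peel off a ``balanced'' index set $S$, you strengthen the void-probability bound to the Gram--Hadamard product $\prod_{\xi_i=1}(1-c_{ii})$, exploit its product structure to exponentially tilt the Bernoulli law, and read off $\sum_i\tilde\lambda_i\geq A-B$ from the negative correlation of the counts on $I$ and $\Omega\setminus I$.  When $B\leq A/2$ this gives the stronger estimate $\ll A^{-1/2}e^{-\tr(1_IP_K)}$ with no $e^{O(B)}$ correction at all, which is where the two proofs buy slightly different things.

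Part (iv) has a real gap, and you have diagnosed exactly where it is, but your proposed remedy is not carried out and I do not think it closes as stated.  After the conditional-variance split, the monomial $\xi_i\xi_j$ appears with a negative sign in $\Var(Y\mid\xi)=\sum_i\xi_ic_{ii}(1-c_{ii})-\sum_{i\neq j}\xi_i\xi_j c_{ij}^2$, so a termwise application of Lemma \ref{lem-xi}(v) only yields $O(A^{-1/2}\tr(1_IP_K))$ after dropping the negative sum.  Your proposal to recover the cancellation by comparing the local-limit constants $\P(\sum_{k\neq i}\xi_k=n-1)$ and $\P(\sum_{k\neq i,j}\xi_k=n-2)$ requires a uniform second-difference estimate on these probabilities across the index set, and nothing in Lemma \ref{lem-xi} supplies that; moreover your intermediate claim that the squared-bias term is $O(A^{-1/2}\Var\mu(\xi))$ is stronger than what the blunt $S$/$S^c$ method actually gives (the paper obtains only $O(A^{-1/2}B(1+B))$, with the extra $B$ coming unavoidably from the pointwise bound $\sum_{i\in S}|\xi_i-\lambda_i|c_{ii}\ll B$).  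The paper sidesteps the cancellation entirely: in the expansion of $\Var(Y\mid\xi)-V$ it first \emph{drops} the non-negative term
$$
\sum_{i,j}(\xi_i-\lambda_i)(\xi_j-\lambda_j)c_{ij}^2=\int_I\int_I\Big|\sum_i(\xi_i-\lambda_i)\phi_i(x)\phi_i(y)\Big|^2\,d\mu(x)\,d\mu(y)\geq 0,
$$
and then uses Bessel's inequality $\sum_j c_{ij}^2\leq c_{ii}$ (together with $\lambda_j\leq 1$) to collapse the remaining cross term to the \emph{linear} quantity $\sum_i|\xi_i-\lambda_i|c_{ii}$, which is then handled by the same $S$/$S^c$ split that controls the squared-bias term.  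This non-negativity observation plus the Bessel inequality is the ingredient missing from your sketch; without it the bound degrades to $A^{-1/2}\tr(1_IP_K)$, which, as you correctly note, is useless.
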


  The key point here is that any restriction of the form $\# \Sigma = n$ gains a factor in the estimates proportional to the standard deviation $A^{1/2}$ in the cardinality of the point process $\Sigma$.  This will allow us to use the theory of determinantal processes to control eigenvalues at a fixed index with satisfactory estimates, since fixing the index is related to fixing the number of elements of a point process on a half-line $(-\infty,E)$.  The lower bound \eqref{A-2} is a technical condition, which roughly speaking asserts that the non-projection behavior of $P_K$ is dominated  by eigenvalues $\lambda_i$ that are far from both $0$ and $1$, as opposed to eigenvalues very close to $0$ or very close to $1$; for the processes arising from GUE matrices, this condition will be verified in practice.  

  \begin{proof}
  From the self-adjoint nature of $P_K$, the self-adjoint projection nature of $1_I$, and the cyclic property of trace we have the identity
  $$
  \tr( 1_I P_K 1_I (1-P_K) )  = \tr( P_K 1_I (1-P_K) )
  + \tr( 1_I P_K (1-1_I) (1_I P_K (1-1_I))^* )$$
  and hence by \eqref{B-1} we have
  \begin{equation}\label{B-2}
     \tr( 1_I P_K (1-P_K) ) \leq \tr( 1_I P_K 1_I (1-P_K) ) = B.
  \end{equation}

  It will be convenient to introduce the local eigenvector inner products
  $$ c_{ij} \coloneqq \int_I \phi_i \phi_j\ d\mu.$$
  We expand $K$ using the spectral theorem as \eqref{kxy}, then the conditions \eqref{A-1}, \eqref{A-2}, \eqref{B-1}, \eqref{B-2} become
  \begin{align}
    \sum_i \lambda_i (1-\lambda_i) &= A \label{A-1-spec} \\
    \sum_i \lambda_i^2 (1-\lambda_i^2) &\gg A \label{A-2-spec} \\
    \sum_i \lambda_i c_{ii}\ d\mu - \sum_i \sum_j \lambda_i \lambda_j c_{ij}^2 &= B \label{B-1-spec} \\
    \sum_i \lambda_i (1-\lambda_i) c_{ii} &\ll B. \label{B-2-spec}
  \end{align}
  From \eqref{A-1-spec}, \eqref{A-2-spec}, we see that for a suitably small constant $1 \ll \eps \leq 1$, one has
  $$ \sum_i \lambda_i^2 (1-\lambda_i)^2 - \eps \lambda_i(1-\lambda_i) \gg A.$$
  The summand is $O(1)$, and hence must be positive for $\gg A$ values of $i$.  For such values, one has $\lambda_i (1-\lambda_i) \asymp_\eps 1$.  If we now allow implied constants to depend on $\eps$, we may thus find a collection $S$ of indices of cardinality
  \begin{equation}\label{S-card}
    \# S \asymp A
  \end{equation}
  such that
  \begin{equation}\label{lambda-s}
    \lambda_i (1-\lambda_i) \asymp 1
  \end{equation}
  for all $i \in S$.  The claim (i) now follows from Lemma \ref{lem-xi}(iv).

  Now we are ready to prove (ii).  By \eqref{mixture}, \eqref{sigma-card} the left-hand side can be written as
  $$ \E 1_{\sum_i \xi_i = n} \P\left( \# (\Sigma_\xi \cap I) = 0 | \xi\right)$$
  which by \eqref{van-2} and the spectral theorem is bounded by
  $$ \E 1_{\sum_i \xi_i = n} \exp\left( - \sum_i \xi_i c_{ii} \right).$$
  Meanwhile,
  \begin{equation}\label{tripk}
    \tr(1_I P_K) = \sum_i \lambda_i c_{ii},
  \end{equation}
  so it suffices to show that
  $$ \E 1_{\sum_i \xi_i = n} \exp\left( - \sum_i (\xi_i-\lambda_i) c_{ii} \right)
  \ll A^{-1/2} \exp(O(B)).$$
  From \eqref{lambda-s}, \eqref{B-2-spec} we have
  \begin{equation}\label{bas}
    \sum_{i \in S} (\xi_i-\lambda_i) c_{ii} \ll B,
  \end{equation}
  so it suffices to show that
  $$ \E 1_{\sum_i \xi_i = n} \exp\left( - \sum_{i \not \in S} (\xi_i-\lambda_i) c_{ii} \right)
  \ll A^{-1/2} \exp(O(B)).$$
  By Lemma \ref{lem-xi}(v), \eqref{lambda-s}, \eqref{S-card} it suffices to show that
  $$ \E  \exp\left( - \sum_{i \not \in S} (\xi_i-\lambda_i) c_{ii} \right)
  \ll \exp(O(B)).$$
  By independence, the left-hand side factors as
  $$ \prod_{i \not \in S} \E  \exp\left( - (\xi_i-\lambda_i) c_{ii} \right).$$
  Direct calculation shows that
  $$  \exp\left( - (\xi_i-\lambda_i) c_{ii} \right) = 1 + O\left( \lambda_i (1-\lambda_i) c_{ii} \right)$$
  and the claim (ii) now follows from \eqref{B-2-spec}.

  Now we prove (iii). Again we apply \eqref{mixture}, \eqref{sigma-card} to write the left-hand side as
  $$ \E 1_{\sum_i \xi_i = n} \P( \#(\Sigma_\xi \cap I) > 1 | \xi ) $$
  which by \eqref{repulse} and the spectral theorem is bounded by
  $$ \frac{1}{2} \E 1_{\sum_i \xi_i = n} \left(\left(\sum_i \xi_i c_{ii}\right)^2 - \sum_{i,j} \xi_i \xi_j c_{ij}^2\right)$$
  From \eqref{lambda-s} and the spectral theorem we have
  $$ \sum_{i \in S} c_{ii} \ll \tr 1_I P_K$$
  so we may bound
  \begin{align*}
  \left(\sum_i \xi_i c_{ii}\right)^2 &- \sum_{i,j} \xi_i \xi_j c_{ij}^2
  \leq O( (\tr 1_I P_K)^2 ) + O\left( (\tr 1_I P_K) \sum_{i \not \in S} \xi_i c_{ii} \right)\\
  &\quad + \left(\sum_{i \not \in S} \xi_i c_{ii}\right)^2 - \sum_{i,j \not \in S} \xi_i \xi_j c_{ij}^2 \\
  &\leq O( (\tr 1_I P_K)^2 ) + O\left( (\tr 1_I P_K) \sum_{i \not \in S} \sum_{i \not \in S} \xi_i c_{ii} \right)\\
  &\quad + \sum_{i,j \not \in S; i \neq j} \xi_i \xi_j c_{ii} c_{jj}
  \end{align*}
  where we have discarded some negative terms.  Applying Lemma \ref{lem-xi}(v) and the triangle inequality, it thus suffices to show that
  $$ \sum_{i \not \in S} \E \xi_i c_{ii} \ll \tr 1_I P_K$$
  and
  $$  \sum_{i,j \not \in S; i \neq j} \E \xi_i \xi_j c_{ii} c_{jj} \ll (\tr 1_I P_K)^2.$$
  But both of these follow by evaluating the expectation and then using \eqref{tripk}.

  Now we prove (iv).  Again we apply \eqref{mixture}, \eqref{sigma-card} to write the left-hand side as
  $$ \E 1_{\sum_i \xi_i = n} \E( (\#(\Sigma_\xi \cap I) - \tr(1_I P_K))^2 | \xi ) $$
  From the Pythagorean theorem, we have
  $$
  \E( (\#(\Sigma_\xi \cap I) - \tr(1_I P_K))^2 | \xi )
  = (\E(\#(\Sigma_\xi \cap I)|\xi) - \tr(1_I P_K))^2 + \Var(\#(\Sigma_\xi \cap I) | \xi)$$
  so it suffices to show that
  \begin{equation}\label{claim-1-e}
     \E 1_{\sum_i \xi_i = n} (\E(\#(\Sigma_\xi \cap I)|\xi) - \tr(1_I P_K))^2 \ll A^{-1/2} B(1+B)
  \end{equation}
  and
  \begin{equation}\label{claim-2-e}
    \E 1_{\sum_i \xi_i = n} \Var(\#(\Sigma_\xi \cap I) | \xi) \ll A^{-1/2} B(1+B).
  \end{equation}
  We begin with \eqref{claim-1-e}.  From \eqref{exp-ker} and the spectral theorem we have
  $$ \E(\#(\Sigma_\xi \cap I)|\xi) = \sum_i \xi_i c_{ii}$$
  and also
  $$ \tr(1_I P) = \sum_i \lambda_i c_{ii}.$$
  Thus by the triangle inequality it suffices to show that
  \begin{equation}\label{useful}
     \E 1_{\sum_i \xi_i = n} \left(\sum_i |\xi_i - \lambda_i| c_{ii}\right)^2 \ll A^{-1/2} B(1+B).
  \end{equation}
  From \eqref{bas}, Lemma \ref{lem-xi} one already has
  $$
     \E 1_{\sum_i \xi_i = n}\left(\sum_{i \in S} |\xi_i - \lambda_i| c_{ii}\right)^2 \ll A^{-1/2} B^2
  $$
  so it suffices to show that
  $$ \E 1_{\sum_i \xi_i = n} \left(\sum_{i \not \in S} |\xi_i - \lambda_i| c_{ii}\right)^2 \ll A^{-1/2} B(1+B)$$
  Using Lemma \ref{lem-xi}, \eqref{lambda-s}, \eqref{S-card} as before, it suffices to show that
  $$ \E \left(\sum_{i \not \in S} |\xi_i - \lambda_i| c_{ii}\right)^2 \ll  B(1+B).$$
  The summands are independent random variables of mean and variance $O( \lambda_i (1-\lambda_i) c_{ii} )$, so the claim follows from \eqref{B-2-spec}.

  Now we prove \eqref{claim-2-e}.  By \eqref{var-ker} and the spectral theorem we have
  $$ \Var(\#(\Sigma_\xi \cap I) | \xi)
  = \sum_i \xi_i c_{ii} - \sum_i \sum_j \xi_i \xi_j c_{ij}^2.$$
  If we write
  $$ V \coloneqq \sum_i \lambda_i c_{ii} - \sum_i \sum_j \lambda_i \lambda_j c_{ij}^2$$
  then
  we have
  $$ \Var(\#(\Sigma_\xi \cap I) | \xi)
  = V + \sum_i (\xi_i-\lambda_i) c_{ii} - \sum_i \sum_j (\xi_i \xi_j - \lambda_i \lambda_j) c_{ij}^2.$$
  We expand
  $$ \xi_i \xi_j - \lambda_i \lambda_j = (\xi_i-\lambda_i) \lambda_j + \lambda_i (\xi_j-\lambda_j) + (\xi_i - \lambda_i) (\xi_j - \lambda_j)$$
  and note that
  $$ \sum_i \sum_j (\xi_i - \lambda_i) (\xi_j - \lambda_j) c_{ij}^2
  = \int_I \int_I \left|\sum_i (\xi_i - \lambda_i) \phi_i(x) \phi_i(y)\right|^2\ d\mu(x) d\mu(y)$$
  is non-negative.  Thus we have
  $$ \Var(\#(\Sigma_\xi \cap I) | \xi)
  = V + \sum_i (\xi_i-\lambda_i) c_{ii} - 2 \sum_i \sum_j (\xi_i - \lambda_i) \lambda_j c_{ij}^2.$$
  By Bessel's inequality, we have
  $$ \sum_j c_{ij}^2 \leq c_{ii};$$
  bounding $\lambda_j$ by $1$, we conclude from the triangle inequality that
  $$ \Var(\#(\Sigma_\xi \cap I) | \xi)
  = V + 3 \sum_i |\xi_i-\lambda_i| c_{ii}.$$
  From \eqref{B-1-spec} and Lemma \ref{lem-xi} we have
  $$\E 1_{\sum_i \xi_i = n} V \ll A^{-1/2} B$$
  so the claim follows from \eqref{useful} and the triangle inequality. This concludes the proof of Theorem \ref{det-est}.
  \end{proof}

  Using Theorem \ref{det-est}, we can now obtain control on the joint distribution of point process counts for projection determinantal kernels.

  \begin{corollary}\label{det-cor}  Let $K$ be a finite rank projection determinantal kernel on $\Omega$, and let $I \subset J \subset \Omega$ be measurable.  Define the quantities
  \begin{align}
  A &\coloneqq \Var \#(\Sigma \cap J) = \int_J \int_{\Omega \backslash J} K(x,y)^2\ d\mu(y) d\mu(x)\label{A-1-alt} \\
  B &\coloneqq \Var \#(\Sigma \cap I) = \int_I \int_{\Omega \backslash I} K(x,y)^2\ d\mu(y) d\mu(x)\label{B-1-alt}
  \end{align}
  and assume the lower bound
  \begin{equation}\label{A-2-alt}
    \int_J \int_J \int_{\Omega \backslash J} \int_{\Omega \backslash J} K(x,y) K(x',y) K(x,y') K(x',y')\ d\mu(y') d\mu(y) d\mu(x') d\mu(x) \gg A.
  \end{equation}
We assume $A$ to be non-zero. Let $n$ be a natural number.
  \begin{itemize}
    \item[(i)] We have
    $$ \P( \#(\Sigma \cap J)=n) \ll A^{-1/2}.$$
    \item[(ii)] We have
    $$ \P( \#(\Sigma \cap J)=n \wedge \# (\Sigma \cap I) = 0) \ll A^{-1/2} \exp( - \tr( 1_I P_K) + O(B) ).$$
    \item[(iii)] We have
    $$ \P( \#(\Sigma \cap J)=n \wedge \# (\Sigma \cap I) > 1) \ll A^{-1/2} \tr( 1_I P_K)^2.$$
    \item[(iv)] We have
    $$ \E 1_{\#(\Sigma \cap J)=n} (\#(\Sigma \cap I) - \tr(1_I P_K))^2 \ll A^{-1/2} B(1+B).$$
  \end{itemize}
  \end{corollary}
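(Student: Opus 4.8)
The plan is to deduce Corollary~\ref{det-cor} from Theorem~\ref{det-est} by passing to the restriction of the point process to $J$. It is standard (see, e.g., \cite{hough}) that if $\Sigma$ is the determinantal process attached to a kernel $K$ and $J \subset \Omega$ is measurable, then $\Sigma \cap J$ is the determinantal process attached to the \emph{restricted kernel}
$$ K_J(x,y) \coloneqq 1_J(x) 1_J(y) K(x,y),$$
whose associated operator is $P_{K_J} = 1_J P_K 1_J$; since $P_K$ is a finite rank non-negative contraction, so is $1_J P_K 1_J$, so $K_J$ is again a finite rank determinantal kernel in the sense of the preceding discussion (typically no longer a projection kernel). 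The strategy is then simply to apply Theorem~\ref{det-est} to the kernel $K_J$ and the subset $I$ — legitimate since $I \subset J$ — and to translate its hypotheses and conclusions back into statements about $K$.

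The first task is to check that the quantities $A$ and $B$ of the Corollary agree with those produced by Theorem~\ref{det-est} for $K_J$. Write $Q \coloneqq 1_J P_K 1_J$. The projection identity $P_K = P_K^2$ gives $Q = 1_J P_K^2 1_J = Q^2 + R$, where
$$ R \coloneqq 1_J P_K (1-1_J) P_K 1_J = Q(1-Q) $$
has kernel $R(x,y) = 1_J(x) 1_J(y) \int_{\Omega \backslash J} K(x,z) K(y,z)\, d\mu(z)$. Taking traces and invoking \eqref{var-ker} (applied to $\Sigma$ restricted to $J$, and using $K(x,x) = \int_\Omega K(x,y)^2\, d\mu(y)$ since $K$ is a projection kernel) identifies the ``$A$'' of \eqref{A-1} attached to $K_J$, namely $\tr(Q(1-Q)) = \Var \#(\Sigma \cap J)$, with the $A$ of \eqref{A-1-alt}. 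Since $I \subset J$ we have $1_I 1_J = 1_I$ and the process attached to $K_J$ intersected with $I$ is just $\Sigma \cap I$; hence the ``$B$'' of \eqref{B-1} attached to $K_J$ equals $\Var \#(\Sigma \cap I) = B$ (the kernel formula \eqref{B-1-alt} again following from \eqref{var-ker} and the projection property of $K$), and moreover $\tr(1_I P_{K_J}) = \tr(1_I 1_J P_K 1_J 1_I) = \tr(1_I P_K)$.

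The one hypothesis that requires a short verification is \eqref{A-2}: for $K_J$ it asserts $\tr(Q^2(1-Q^2)) \gg A$, while we are given instead \eqref{A-2-alt}. But from the kernel of $R = Q(1-Q)$ computed above, the Hilbert--Schmidt identity $\|R\|_{\mathrm{HS}}^2 = \tr(R^*R) = \tr(R^2)$ shows that the left-hand side of \eqref{A-2-alt} equals $\tr\big((Q(1-Q))^2\big) = \tr(Q^2(1-Q)^2)$. Since the eigenvalues $\mu$ of $Q$ lie in $[0,1]$ and $\mu^2(1-\mu^2) = \mu^2(1-\mu)(1+\mu) \geq \mu^2(1-\mu)^2$, we get $\tr(Q^2(1-Q^2)) \geq \tr(Q^2(1-Q)^2) \gg A$, so \eqref{A-2} holds for $K_J$. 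As $A = \Var\#(\Sigma\cap J)$ is assumed non-zero, Theorem~\ref{det-est} applies to $K_J$; rewriting its conclusions (i)--(iv) via the identifications above ($\#(\Sigma\cap J)$ for $\#\Sigma$, $\#(\Sigma\cap I)$ for the intersection of the $K_J$-process with $I$, $\tr(1_I P_K)$ for $\tr(1_I P_{K_J})$) yields (i)--(iv) of the Corollary verbatim.

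I expect the only real obstacle to be the bookkeeping in the last two paragraphs: recognizing \eqref{A-2-alt} as the squared Hilbert--Schmidt norm of the operator $Q(1-Q)$ (so that it equals $\tr(Q^2(1-Q)^2)$, which is at most the quantity $\tr(Q^2(1-Q^2))$ appearing in \eqref{A-2}), and carefully matching the kernel-level definitions of $A$, $B$ in \eqref{A-1-alt}, \eqref{B-1-alt} with the operator-level definitions in \eqref{A-1}, \eqref{B-1} through the projection identity $P_K = P_K^2$. Once these identifications are in place, the Corollary is an immediate specialization of Theorem~\ref{det-est}.
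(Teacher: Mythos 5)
Your proposal is correct and takes essentially the same route as the paper: restrict the kernel to $J$ (obtaining $Q = 1_J P_K 1_J$), observe that $Q(1-Q)$ has kernel $1_J(x)1_J(y)\int_{\Omega\setminus J} K(x,z)K(y,z)\,d\mu(z)$ via the projection identity $P_K = P_K^2$, and match the hypotheses of Theorem~\ref{det-est} against \eqref{A-1-alt}, \eqref{B-1-alt}, \eqref{A-2-alt} before invoking it directly. One small point in your favor: the paper disposes of \eqref{A-2} with the terse phrase ``taking traces and Frobenius norms of $P_{K_J}-P_{K_J}^2$,'' which silently uses the fact that the Frobenius norm only gives $\tr(Q^2(1-Q)^2)$ while \eqref{A-2} asks for the (larger) quantity $\tr(Q^2(1-Q^2))$; your explicit inequality $\mu^2(1-\mu^2)\geq\mu^2(1-\mu)^2$ for $\mu\in[0,1]$ is exactly the missing word that makes the paper's step rigorous.
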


  Intuitively, the interpretation of these estimates is that the imposition of the constraint $\#(\Sigma \cap J)=n$ allows the estimates on $\# (\Sigma \cap I)$ to improve by the natural factor $A^{1/2}$, which is the standard deviation of $\#(\Sigma \cap J)$.

  \begin{proof}  We apply Theorem \ref{det-est} with $\Omega$ replaced with $J$, and $K$ replaced with the restriction $K_J$ to $J \times J$.  Because $P_K^2=P_K$, we have
  $$ K(x,y) = \int_\Omega K(x,z) K(z,y)\ d\omega(z)$$
  for almost every $x,y$, and hence the integral kernel of $P_{K_J} - P_{K_J}^2$ is
  $$ K(x,y) - \int_J  K(x,z) K(z,y)\ d\omega(z) = \int_{\Omega \backslash J}  K(x,z) K(z,y)\ d\omega(z).$$
  Taking traces and Frobenius norms of $P_{K_J} - P_{K_J}^2$, we conclude that \eqref{A-1}, \eqref{A-2} follow from \eqref{A-1-alt}, \eqref{A-2-alt} respectively.  The hypothesis \eqref{B-1} similarly follows from \eqref{B-1-alt}, and the corollary is now immediate from Theorem \ref{det-est}.
  \end{proof}

  \section{Plancherel--Rotach asymptotics}\label{rotarch-sec}

We now apply the previous determinantal process theory to control GUE eigenvalue gaps.
 Let  $x$ be in the bulk region \eqref{bulk-def} for fixed $\delta>0$.  It will be convenient to introduce the affine rescaling
 $$ f_x(t) \coloneqq \sqrt{2N}\left(x + \frac{t}{N \rhosc(x)}\right)$$
for $t \in \R$, and the associated counting functions
$$ N_{x,t} \coloneqq \#(\Sigma \cap [f_x(t), +\infty))$$
and
$$ N_{x,t,h} \coloneqq N_{x,t} - N_{x,t+h} = \#(\Sigma \cap [f_x(t), f_x(t+h))).$$
These functions are related to the eigenvalues $\lambda_i$, since we have
\begin{equation}\label{lit}
   \lambda_i \geq f_x(t) \iff N_{x,t} \geq N-i
\end{equation}
for any $1 \leq i \leq n$ and $t \in \R$.

We also introduce the normalized GUE kernel
$$ K^{(N)}(x,y) \coloneqq \sqrt{2N} K_N\left(\sqrt{2N} x, \sqrt{2N} y\right).$$

The normalized density of states $N \rho_N(x) \coloneqq K^{(N)}(x,x)$ has the asymptotic
  $$N \rho_N(x) = N \rhosc(x) + \frac{1}{4\pi} \left(\frac{1}{x-1}-\frac{1}{x+1}\right) \cos \left( n \int_x^1 \rho_{\mathrm{sc}}(y)\ dy \right) + O_\delta(1/N);$$
  see \cite[(4.2)]{erc}.  Among other things, this can be used to yield the asymptotic
  \begin{equation}\label{expected}
    \E N_{x,0} = \int_x^\infty N \rho_N(y)\ dy = N \int_x^1 \rhosc(y)\ dy
  + O_\delta(\log N/N);
  \end{equation}
  see \cite[Lemma 2.1]{gustavsson}. In a similar fashion, we have
  $$ \E N_{x,0,h} = \int_x^{\frac{h}{N \rho_{\mathrm{sc}}(x)}} N \rho_N(y)\ dy = h + O_\delta(\log^{O(1)} N / N)$$
  whenever $0 < h \ll \log^{O(1)} N$.

  This asymptotic also yields the crude upper bound
  $$ K^{(N)}(x,x)\ll_\delta N$$
  in this bulk region $[-1+\delta,1-\delta]$, which by the positive definiteness of $K^{(N)}$ implies that
  $$ K^{(N)}(x,y) \ll_\delta N$$
  for $x,y$ in this region $[-1+\delta,1-\delta]$.  By using the Christoffel--Darboux formula as in \cite[p. 165]{gustavsson} we also obtain the bound
  $$ K^{(N)}(x,y) \ll_\delta \frac{1}{|x-y|}$$
  in this region, while if $y$ lies outside of the bulk region $[-1+\delta,1-\delta]$ the same analysis, together with the Hermite polynomial asymptotics in \cite[\S 4]{gustavsson} and standard Airy function asymptotics, yields the bounds
  $$ K^{(N)}(x,y) \ll_\delta \frac{1}{|1-y^2|^{1/4} y^{100}}\frac{1}{|x-y|}$$
  (say); indeed one can even get a gaussian type decay as $|y| \to \infty$, but we will not need such good decay here.  Putting all these estimates together, we obtain the combined bound
  \begin{equation}\label{xxy-2}
    K^{(N)}(x,y) \ll_\delta \frac{1}{|1-y^2|^{1/4} (1+|y|)^{100}} \min\left( N, \frac{1}{|x-y|} \right)
  \end{equation}
  whenever $x \in [-1+\delta,1-\delta]$ and $y \in \R$.

  In \cite[Lemma 2.3]{gustavsson} the variance bound
  $$ \Var N_{x,0} = \frac{1+o(1)}{2\pi^2}  \log(N(1-x^{3/2}))$$
  was demonstrated for $x$ in the bulk region \eqref{bulk-def}; in particular, for $N$ large we have
  \begin{equation}\label{variance}
    \Var N_{x,0} \asymp_\delta \log N
  \end{equation}
  or equivalently
  $$ \int_x^\infty \int_{-\infty}^x K^{(N)}(a,b)^2\ da db \asymp_\delta \log N.$$
  We also claim the related lower bound
  \begin{align*}
    &\int_x^\infty \int_x^\infty \int_{-\infty}^x \int_{-\infty}^x K^{(N)}(a,b) K^{(N)}(a,b') K^{(N)}(a',b) K^{(N)}(a',b')\ da da' db db'\\
    &\quad  \gg_\delta \log N.
  \end{align*}
  Indeed, by Fubini's theorem we can write the left-hand side as
  $$  \int_{-\infty}^x  \int_{-\infty}^x K^*(a,a')^2\ da da'$$
  where $K^*$ is the variant kernel
  $$ K^*(a,a') \coloneqq \int_x^\infty K^{(N)}(a,b) K^{(N)}(a',b)$$
  If we restrict $(a,a')$ to the region $R/N \leq |a-x|, |a-x| \leq 2R/N$ for some $1 \leq R \leq N^{1/2}$, one can use the asymptotics in \cite[\S 5]{gustavsson} to find that
  \begin{equation}\label{pra}
    K^{(N)}(a,b) = \frac{\sin(N \pi \rho_{\mathrm{sc}}(x) (a-b)) + O(1/\log N)}{\pi (a-b)}
  \end{equation}
  when $b \in [x,+\infty)$ with $|x-b| \ll R$, and from this, \eqref{xxy-2}, and some routine integration by parts one can obtain a bound of the form
  $$ |K^*(a,a')| \gg \frac{N}{R} ( |\cos( N \pi \rho_{\mathrm{sc}}(x) (a-a')) |- O(1/\log N))$$
  from which the claim follows by integrating over $a,a'$ in the indicated region and then summing dyadically over $R$.

  From \eqref{xxy-2}, we also obtain the bound
  $$ \int_I \int_{I^c} K^{(N)}(a,b)^2\ da db \ll_\delta \log(2+h)$$
  whenever $I$ is an interval of the form $[f_x(0)/\sqrt{2N}, f_x(h)/\sqrt{2N}]$ for some $x$ in the bulk region \eqref{bulk-def} and $0 < h \ll \log^{O(1)} N$.  Equivalently, we have
  \begin{equation}\label{varfx}
    \Var N_{x,0,h} \ll_\delta \log(2+h).
  \end{equation}

  Inserting these bounds into Corollary \ref{det-cor}, we conclude

  \begin{corollary}\label{bulk-cor} Let $x$ lie in the bulk region \eqref{bulk-def} for some fixed $\delta>0$, and let $0 < h \ll \log^{O(1)} N$.  Let $n$ be a natural number.
    \begin{itemize}
      \item[(i)] We have
      $$ \P( N_{x,0} =n) \ll_\delta \frac{1}{\sqrt{\log N}}.$$
      \item[(ii)] We have
      $$ \P( N_{x,0} = n \wedge N_{x,0,h} = 0) \ll_\delta \frac{1}{\sqrt{\log N}} \exp( - h / 2 ).$$
      \item[(iii)] We have
      $$ \P( N_{x,0} = n \wedge N_{x,0,h} > 1) \ll_\delta \frac{1}{\sqrt{\log N}} h^2.$$
      \item[(iv)] We have
      $$ \E 1_{N_{x,0}=n} (N_{x,0,h} - h)^2 \ll_\delta \frac{1}{\sqrt{\log N}} \log^2 (2+h).$$
    \end{itemize}
    \end{corollary}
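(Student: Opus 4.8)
The plan is to obtain Corollary \ref{bulk-cor} by a direct application of Corollary \ref{det-cor}, feeding in the Plancherel--Rotach estimates already recorded in this section. Concretely, I would take $\Omega = \R$ with Lebesgue measure and $K = K^{(N)}$, which is the kernel of the rank-$N$ orthogonal projection onto the span of the first $N$ rescaled Hermite functions, hence a finite rank \emph{projection} determinantal kernel; its determinantal process is the rescaled eigenvalue process $\Sigma^{(N)} \coloneqq \{\lambda_i/\sqrt{2N}\}$. For the nested sets I would take $J \coloneqq [x,+\infty)$ and $I \coloneqq [x, x + h/(N\rhosc(x)))$, so that after unwinding the rescaling one has $\#(\Sigma^{(N)}\cap J) = \#(\Sigma \cap [f_x(0),+\infty)) = N_{x,0}$ and $\#(\Sigma^{(N)}\cap I) = N_{x,0,h}$, and $I \subset J$ since $h>0$.

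Next I would verify the hypotheses of Corollary \ref{det-cor}. The quantity $A$ in \eqref{A-1-alt} is exactly $\Var N_{x,0}$, which is $\asymp_\delta \log N$ by \eqref{variance}; in particular $A \neq 0$ for $N$ large and $A^{-1/2} \ll_\delta (\log N)^{-1/2}$. The quantity $B$ in \eqref{B-1-alt} is $\Var N_{x,0,h} \ll_\delta \log(2+h)$ by \eqref{varfx}. The lower bound \eqref{A-2-alt} is precisely the four-fold integral estimate displayed just before \eqref{varfx}, shown there to be $\gg_\delta \log N$ and hence $\gg A$. Finally, $\tr(1_I P_{K^{(N)}}) = \int_I K^{(N)}(y,y)\,dy = \E N_{x,0,h} = h + O_\delta(\log^{O(1)} N/N)$ by the expectation asymptotic recorded after \eqref{expected}; I would also note the cruder deterministic bound $\tr(1_I P_{K^{(N)}}) \ll_\delta h$, which follows from $K^{(N)}(y,y) = N\rho_N(y) \ll_\delta N$ on the bulk together with $|I| \asymp_\delta h/N$.

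Granting these, Corollary \ref{det-cor} yields the four assertions after elementary simplifications. Part (i) is immediate. For part (ii) I would bound $\exp(-\tr(1_I P_{K^{(N)}}) + O(B)) \le \exp(-h + o(1))\,(2+h)^{O_\delta(1)} \ll_\delta \exp(-h/2)$, where the last step uses that any fixed power of $2+h$ is $\ll_C e^{h/2}$ for $h>0$. For part (iii) I would use $\tr(1_I P_{K^{(N)}})^2 \ll_\delta h^2$. For part (iv) I would replace $h$ by $\tr(1_I P_{K^{(N)}})$ at the cost of an additive error $(\tr(1_I P_{K^{(N)}}) - h)^2 = o(1)$, which after multiplying by $1_{N_{x,0}=n}$ and taking expectations contributes $\ll_\delta (\log N)^{-1/2} o(1)$, absorbed since the target $(\log N)^{-1/2}\log^2(2+h)$ is bounded below by a constant times $(\log N)^{-1/2}$; the main term is $\ll_\delta (\log N)^{-1/2} B(1+B) \ll_\delta (\log N)^{-1/2}\log^2(2+h)$ via $B \ll_\delta \log(2+h)$.

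I do not expect a genuine obstacle here: the substantive analytic facts — the logarithmic variance asymptotic \eqref{variance}, its four-fold integral companion, and the expectation asymptotic for $N_{x,0,h}$ — were already established above from Plancherel--Rotach asymptotics. The only points requiring care are (a) matching the data fed into Corollary \ref{det-cor} to its precise hypotheses, in particular identifying \eqref{A-2-alt} with the displayed lower bound and confirming $A \neq 0$, and (b) handling the full range $0 < h \ll \log^{O(1)} N$ uniformly, especially for very small $h$, where one must use the deterministic bound $\tr(1_I P_{K^{(N)}}) \ll_\delta h$ and the elementary inequality $(2+h)^C e^{-h} \ll_C e^{-h/2}$ so that no spurious $\log N$ factors or additive losses survive.
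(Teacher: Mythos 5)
Your proposal is correct and follows essentially the same route the paper takes: the paper derives Corollary \ref{bulk-cor} directly by ``inserting these bounds into Corollary \ref{det-cor},'' i.e.\ applying the determinantal two-interval estimate with $A = \Var N_{x,0}$, $B = \Var N_{x,0,h}$, and the displayed four-fold integral lower bound, exactly as you describe. The elementary simplifications you list (absorbing $(2+h)^{O(1)}$ into $e^{h/2}$ in (ii), and replacing $h$ by $\tr(1_I P_{K^{(N)}})$ up to an $o(1)$ error in (iv)) are the routine steps the paper leaves implicit.
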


Note here the gain of $\sqrt{\log N}$ in the estimates, which represents the standard deviation of the integrated density of states $N_{x,0}$.

We are now ready to prove Theorem \ref{gap-bounds}.    We allow all bounds to depend on $\delta$.

 If $1 \leq H \leq \log^{10} N$, then from \eqref{lit}, \eqref{expected}, \eqref{variance} and Chebyshev's inequality we have
$$ \P\left( \lambda_i \geq f_x(t + H \log^{1/2} N) \right) = \P \left( N_{x,H \log^{1/2} N} \geq N-i \right) \ll 1 / H^2$$
and
$$ \P\left( \lambda_i < f_x(t - H \log^{1/2} N) \right) = \P \left(N_{x,-H \log^{1/2} N} < N-i \right) \ll 1 / H^2.
$$
If instead $H > \log^{10} N$, the same claims follow (with much stronger bounds) from \eqref{lconc}.  Finally the claim is trivial for $0 < H \leq 1$.  Thus one has the concentration bounds
\begin{equation}\label{lifx}
   \P\left( |\lambda_i - f_x(0)| \geq \frac{H \log^{1/2} N}{N \rhosc(x)} \right) \ll 1 / H^2
\end{equation}
for all $H > 0$.

To prove (i), we may assume without loss of generality that $h \geq 1$.  Applying \eqref{lifx} with $H \coloneqq \exp(h/4)$, we see that with an acceptable loss of probability we may restrict to the event that
\begin{equation}\label{hln}
  |\lambda_i - f_x(0)| < \frac{H \log^{1/2} N}{N \rhosc(x)}.
\end{equation}
If $g_i \geq h$, then from \eqref{lifx} we have the event
$$ \left(N_{x,t} = N-i\right) \wedge \left(N_{x,t, h/2} = 0\right)$$
holding for an interval of $t \in [-2H\log^{1/2} N,2H\log^{1/2} N]$ of length at least $h/2$, thus
$$ \int_{-2H\log^{1/2} N}^{2H\log^{1/2} N} 1_{N_{x,t} = N-i} 1_{N_{x,t,h/2} = 0}\ dt \geq h/2.$$
From Corollary \ref{bulk-cor}(ii) and Fubini's theorem, the left-hand side has expectation $O( H \exp(-h/2 )) = O(\exp(-h/4))$, and the claim follows from Markov's inequality.

Now we prove (ii).  Fix $p$, and let $A>0$ be sufficiently large depending on $p$.  From part (i), we have
$$ \P( g_i \geq 2^j) \ll_{A,p} \exp(-2^j/4)$$
whenever $2^j \leq A \log\log N$.  In particular, we have
$$ \P( g_i \geq 2^j) \ll_{A,p} \log^{-A/4} N$$
for $A \log\log N < 2^j \leq A \log N$.  Finally, from \eqref{gap-weak} we have
$$ \P( g_i \geq 2^j) \ll_{A,p} \exp(-2^{j/2})$$
for $2^j > A \log N$.  Multiplying these bounds by $2^{pj}$ and summing, one obtains the claim (ii).

For (iii), we may assume without loss of generality that $h \leq 1$.  
We apply \eqref{lifx} with $H \coloneqq h^{-1/3}$, so with an acceptable loss of probability we may restrict to the event that \eqref{hln} holds.  If $N \rho_{\mathrm{sc}}(x) (\lambda_{i+1} - \lambda_i) \leq h$, then from \eqref{lifx} we have the event
$$ \left(N_{x,t} = N-i\right) \wedge \left(N_{x,t,2h} > 1\right)$$
holding for an interval of $t \in [-2H\log^{1/2} N,2H\log^{1/2} N]$ of length at least $h$, thus
$$ \int_{-2H\log^{1/2} N}^{2H\log^{1/2} N} 1_{N_{x,t} = N-i} 1_{N_{x,t,2h} > 1}\ dt \geq h.$$
If $h \gg N^{-1}$, then $f_x(t)$ stays inside the bulk region, and  from Corollary \ref{bulk-cor}(iii) and Fubini's theorem, the left-hand side has expectation $O( H h^2 ) = O(h^{5/3})$.  In the regime $h \ll N^{-1}$, we instead discard the $1_{N_{x,t} = N-i}$ factor and use \eqref{repulse}, \eqref{xxy-2} to obtain a bound $\P( \# N_{x,t,2h} > 1 ) \ll h^2$, which gives the expectation bound $O((H \log^{1/2} N) h^2) = O(h^{5/3} \log^{1/2} \frac{1}{h})$. The claim (iii) follows from Markov's inequality.

Now we prove the first estimate \eqref{imi} of (iv).  We first dispose of an easy case in which $\alpha > m^2$.  In this case we use (ii) and the triangle inequality to conclude that
$$ \E ( g_i + \dots + g_{i+m-1})^p \ll_{p`} m^p $$
for any $p>1$, hence by Markov's inequality
$$ \P( |g_i + \dots + g_{i+m-1}-m| > \alpha) \ll_{p} m^p / \alpha^p.$$
Setting $p=4$ (say), we obtain the desired estimate for $\alpha > m^2$.  Hence we may assume that $\alpha \leq m^2$; in particular, we have $\alpha \ll \log^{O(1)} N$.

From \eqref{gap-def} one has
$$ g_{i+j} = (1+O(\log^{O(1)} N/N)) \sqrt{N/2} \rhosc(\gamma_{i/N}) (\lambda_{i+j+1}-\lambda_{i+j})$$
for $0 \leq j < m$, hence on telescoping we have
$$ g_1+\dots+g_{i+m-1} = (1+O(\log^{O(1)} N/N)) \sqrt{N/2} \rhosc(\gamma_{i/N}) (\lambda_{i+m}-\lambda_{i}).$$
It thus suffices to show that
$$\P \left( |\sqrt{N/2} \rhosc(\gamma_{i/N}) \left(\lambda_{i+m}-\lambda_{i}\right) - m| > \alpha\right) \ll \frac{\log^{4/3} (2+m)}{\alpha^2}.$$
Applying \eqref{lifx} with $H \coloneqq \alpha / \log^{2/3}(2+m)$, we may assume without loss of generality that
$$ |\lambda_i - f_x(0)| < \frac{H \log^{1/2} N}{N \rhosc(x)}.$$
We begin with the upper estimate
$$\P \left( \sqrt{N/2} \rhosc(\gamma_{i/N}) (\lambda_{i+m}-\lambda_{i}) - m > \alpha\right) \ll \frac{\log^{4/3} (2+m)}{\alpha^2}.$$
If the event
$$ \sqrt{N/2} \rhosc(\gamma_{i/N}) (\lambda_{i+m}-\lambda_{i}) - m > \alpha$$ holds, then one has
$$ \left( N_{t,x} = N-i\right) \wedge \left(N_{t,x,m+\alpha/2} < m\right)$$
for an interval of $t \in [-2H\log^{1/2} N,2H\log^{1/2} N]$ of length at least $\gg \alpha$, thus
$$ \int_{-2H\log^{1/2} N}^{2H\log^{1/2} N} 1_{N_{t,x} = N-i} 1_{N_{t,x,m+\alpha/2} <m}\ dt \gg \alpha.$$
By Corollary \ref{bulk-cor}(iv) and Markov's inequality, the integrand has expectation $O_\delta( \frac{1}{\sqrt{\log N}} \log^2 (2+m) / \alpha^2 )$.  By Fubini's theorem and Markov's inequality, the above event then occurs with probability $O( H \log^2 (2+m) / \alpha^3 )$, which is acceptable by choice of $H$.

A similar argument works for the lower estimate
$$\P ( \sqrt{N/2} \rhosc(\gamma_{i/N}) (\lambda_{i+m}-\lambda_{i}) - m < -\alpha) \ll \frac{\log^{4/3} (2+m)}{\alpha^2}.$$
We may assume that $\alpha \leq m$, as the left-hand side vanishes otherwise.  If this event holds, then one has
$$ \left(N_{t,x} = N-i\right) \wedge \left(N_{t,x,m-\alpha/2} \geq m\right)$$
for an interval of $t \in [-2H\log^{1/2} N,2H\log^{1/2} N]$ of length at least $\gg \alpha$, thus
$$ \int_{-2H\log^{1/2} N}^{2H\log^{1/2} N} 1_{N_{t,x} = N-i} 1_{N_{t,x,m-\alpha/2} \geq m}\ dt \gg \alpha.$$
One now repeats the previous arguments to obtain the required bound \eqref{imi}.

Now we prove \eqref{imi-2}.  By dyadic decomposition we may bound
$$ \E ( g_i + \dots + g_{i+m-1} - m|^2 ) \ll 1 + \sum_{\alpha \geq 1} \alpha^2 \P( |g_i + \dots + g_{i+m-1} - m| > \alpha ) $$
where the sum is over powers of two.  The contribution of all $\alpha \leq m^2$ is then acceptable from \eqref{imi}.  For the remaining values of $\alpha$, we use Markov's inequality and (ii) to bound
$$\P( |g_i + \dots + g_{i+m-1} - m| > \alpha ) \ll \alpha^4 \E |g_i + \dots + g_{i+m-1}|^4
\ll \alpha^4 m^4;$$
inserting this bound, we see that the contribution of the $\alpha > m^2$ is also acceptable.

\section{Analysis of interlacing gaps}\label{interlacing-sec}

We now prove Theorem \ref{interlacing-univ}.  We allow implied constants to depend on $\delta$.

The main idea is to relate the eigenvalues $\lambda_i$ of a GUE matrix with the eigenvalues $\lambda'_j$ of an $N-1 \times N-1$ minor, together with some additional gaussian random variables via a well known Schur complement procedure.  The bounds in Theorem \ref{gap-bounds} will allow one to approximately localize the resulting equation to effectively control each eigenvalue $\lambda_i$ by a bounded number of minor  eigenvalues $\lambda'_j$.  This will allow us to derive a universality result for the eigenvalue and interlacing gaps from a universality result of the eigenvalue gaps alone.

We turn to the details, if we write
$$ M = \begin{pmatrix} M' & X \\ X^* & a_{NN} \end{pmatrix}$$
where $M'$ is the top left $N-1 \times N-1$ minor of $M$, $X$ is a random complex Gaussian vector (with entries $N(0,1/4)_\C$), and $a_{NN}$ is a random Gaussian with distribution $N(0,1/2)_\R$, then for each eigenvalue $\lambda_i$ of $M$, an application of Schur's complement to the singular matrix $M - \lambda_i I$ reveals (almost surely) that
$$ a_{NN} - \lambda_i -X^* (M'-\lambda_i I)^{-1} X = 0.$$
Diagonalizing $M'$, we obtain the equation (cf., \cite[Lemma 40]{taovu})
\begin{equation}\label{ani}
   a_{NN} -\lambda_i - \sum_{j=1}^{N-1} \frac{|X_j|^2}{\lambda'_j - \lambda_i} = 0
\end{equation}
where $X_j$ are the components of $X$ with respect to an orthonormal eigenbasis of $M'$.  Crucially, the $X_j$ are independent copies of $N(0,1/4)_\C$ that are independent of both $a_{NN}$ and $M'$ (and hence of the $\lambda'_j$).

Almost surely, the $X_j$ are non-zero, and then the left-hand side of \eqref{ani} is a rational function of $\lambda_i$ with poles at the $\lambda'_j$ that are decreasing on the real line away from such poles.  In particular, $\lambda_i$ is the unique solution to \eqref{ani} in the interval $(\lambda'_{i-1}, \lambda'_i)$ if we hold the $X_j$ and $a_{NN}$ fixed.

Let $m_0$ be a fixed natural number; we will later send it slowly to infinity as $N \to \infty$.  Call an event \emph{small probability} if it occurs with probability $O(m_0^{-0.1}) + o(1)$.  Let $i$ be in the bulk region \eqref{bulk-def}.  We will approximate the equation \eqref{ani} in this case by a more localized equation involving a much smaller number of the $\lambda'_j$, , outside of an event of small probability.

By \eqref{rigidity} and the union bound, we have outside of an event of small probability that
$$ \lambda_j = \sqrt{2N} \gamma_{j/N} + O\left(\frac{\log N}{\sqrt{N}} \right)$$
for all $j$ in the bulk region \eqref{bulk-def}. Indeed, from \cite[Theorem 7, Proposition 10]{dallaporta} and the union bound we may in fact obtain the more general bound
$$ \lambda_j = \sqrt{2N} \gamma_{j/N} + O\left(\frac{\log N}{N^{1/6} \min(j, N-j+1)^{1/3}} \right)$$
for all $1 \leq j \leq N$.  Similarly we have
$$ \lambda'_j = \sqrt{2(N-1)} \gamma_{j/(N-1)} + O\left(\frac{\log N}{N^{1/6} \min(j, N-j)^{1/3}} \right)$$
for all $1 \leq j \leq N-1$.  By interlacing we also have
\begin{equation}\label{lam}
   \lambda_i = \sqrt{2(N-1)} \gamma_{i/(N-1)} + O\left(\frac{\log N}{\sqrt{N}} \right)
\end{equation}
In the regime $|j-i| \geq \log^{2} N$, this implies from Taylor expansion that
$$ \frac{1}{\lambda'_j- \lambda_i} = \frac{1}{\sqrt{2(N-1)} ( \gamma_{j/(N-1)} - \gamma_{i/(N-1)})}
+ O\left(\frac{N^{5/6} \log N}{|i-j|^2 \min(j, N-j)^{1/3}} \right).$$
By Markov's inequality (and noting that each $|X_j|^2$ has mean $1/2$), the quantity\footnote{For the rest of this section, all sums will be over the $j$ variable.}
$$ \sum_{|j-i| \geq \log^{2} N} |X_j|^2 \frac{N^{5/6} \log N}{|i-j|^2 \min(j, N-j)^{1/3}}$$
can be seen to have mean $O(\log^{-1} N)$, hence by Markov's inequality is equal to $o(1)$ outside of an event of small probability.  This lets us partially localize \eqref{ani} to
$$
a_{NN} -\lambda_i - \sum_{|j-i| < \log^{2} N} \frac{|X_j|^2}{\lambda_i - \lambda'_j}
- \sum_{|j-i| \geq \log^{2} N} \frac{|X_j|^2}{\sqrt{2(N-1)} (\gamma_{j/(N-1)} - \gamma_{i/(N-1)} )}
= o(1).$$

The latter sum can be calculated to have a variance of $O(N \log^{-2} N)$ and mean
\begin{equation}\label{mui}
  \mu_i \coloneqq \sum_{|j-i| \geq \log^{2} N} \frac{1}{2\sqrt{2(N-1)} (\gamma_{j/(N-1)} - \gamma_{i/(N-1)})}.
\end{equation}
By Chebyshev's inequality, this sum is then equal to $\mu_i+o(N^{1/2})$ outside of an event of small probability, thus
$$
a_{NN} -\lambda_i - \sum_{|j-i| < \log^{2} N} \frac{|X_j|^2}{\lambda'_j - \lambda_i}
- \mu'_i
= o(N^{1/2}).$$
The Gaussian random variable $a_{NN}$ is also of size $o(N^{1/2})$ outside of such an event, so by \eqref{lam} one has
$$ - \sqrt{2(N-1)} \gamma_{i/(N-1)} - \sum_{|j-i| < \log^{2} N} \frac{|X_j|^2}{\lambda'_j - \lambda_i}
- \mu_i
= o(N^{1/2})$$
outside of an event of small probability.

We have now localized the equation to only involve $O(\log^{2} N)$ values of $\lambda'_j$.  We now take advantage of Theorem \ref{gap-bounds} to localize further.  From Theorem \ref{gap-bounds}(iv)  and the union bound, we see that
$$ g_i + \dots + g_{i+k-1} = k + O( k^{0.6} )$$
and similarly
$$ g_{i-k} + \dots + g_{i-1} = k + O( k^{0.6} )$$
for all $m_0 \leq k \leq \log^2 N$ outside of an event of small probability.  By interlacing, this implies that
\begin{equation}\label{lli}
   \lambda'_j - \lambda_i, \lambda'_j - \lambda'_i, \lambda'_j - \lambda'_{i-1} =
\frac{j-i}{\sqrt{N/2} \rhosc(\gamma_{i/N})} + O\left( \frac{|i-j|^{0.6}}{\sqrt{N}}\right)
\end{equation}
whenever $m_0 \leq |j-i| \leq \log^2 N$.  Also, from Theorem \ref{gap-bounds}(ii), we have
$$ \lambda'_i-\lambda_{i-1} \ll m_0^{0.1} / \sqrt{N}$$
outside of an event of small probability.  We can thus use Taylor expansion to write
$$ \sum_{m_0 \leq |j-i| < \log^{2} N} \frac{|X_j|^2}{\lambda'_j - \lambda_i}
= \sum_{m_0 \leq |j-i| < \log^{2} N} \frac{|X_j|^2}{\lambda'_j- \lambda'_i }
+ O\left( m_0^{0.1} \sqrt{N} \sum_{m_0 \leq |j-i| < \log^{2} N} \frac{|X_j|^2}{|i-j|^2} \right).$$
The expression inside the $O()$ has mean $O(m_0^{-0.9} \sqrt{N})$, hence is $O(m_0^{-0.8} \sqrt{N})$ outside of an event of small probability.  Meanwhile, for fixed $\lambda'_j$, the random variable
$ \sum_{m_0 \leq |j-i| < \log^{2} N} \frac{|X_j|^2}{\lambda'_j - \lambda'_i}$ has variance $O(  m_0^{-1} \sqrt{N})$, hence is $O(m_0^{-0.4} \sqrt{N})$ within of its mean outside of an event of small probability.  We conclude that
$$ \sum_{m_0 \leq |j-i| < \log^{2} N} \frac{|X_j|^2}{\lambda'_j - \lambda_i }
= \sum_{m_0 \leq |j-i| < \log^{2} N} \frac{1}{2(\lambda'_j - \lambda'_i)} + O( m_0^{-0.4} \sqrt{N} )$$
outside of such a small event.  Applying \eqref{lli} and Taylor expansion, we then have
$$ \sum_{m_0 \leq |j-i| < \log^{2} N} \frac{|X_j|^2}{\lambda'_j - \lambda_i }
= \sum_{m_0 \leq |j-i| < \log^{2} N} \frac{\sqrt{N/2} \rhosc(\gamma_{i/N})}{2(i-j)} + O( m_0^{-0.4} \sqrt{N} ).$$
By symmetry, the sum on the right-hand side vanishes.  We conclude that, outside of an event of small probability,
$$ - \sqrt{2(N-1)} \gamma_{i/(N-1)} - \sum_{|j-i| < m_0} \frac{|X_j|^2}{\lambda'_j - \lambda_i}
- \mu_i
= O(m_0^{-0.4} N^{1/2}) + o(N^{1/2}).$$
From \eqref{gammai-def} one can calculate that
$$ \frac{1}{2\sqrt{2(N-1)} (\gamma_{j/(N-1)} - \gamma_{i/(N-1)})}
= (1+O(N^{-1/3})) \frac{\sqrt{N-1}}{2\sqrt{2}} \int_{\gamma_{(i-1)/(N-1)}}^{\gamma_{i/(N-1)}} \frac{\rhosc(x)}{x - \gamma_{i/(N-1)}}\ dx$$
(in fact sharper error estimates are available in the bulk) and hence by \eqref{mui}
$$ \mu_i = \frac{\sqrt{N-1}}{2\sqrt{2}} \int_{|x-\gamma_{i/(N-1)}| \geq \log^2 N / \rhosc(\gamma_{i/(N-1)}) \sqrt{2N}} \frac{\rhosc(x)}{x - \gamma_{i/(N-1)}}\ dx + o(\sqrt{N})$$
which we can simplify slightly to
$$ \mu_i = \frac{\sqrt{N-1}}{2\sqrt{2}} \mathrm{p.v.} \int_\R \frac{\rhosc(x)}{x - \gamma_{i/(N-1)} }\ dx + o(\sqrt{N}).$$
A standard computation using the Plemelj formula yields
$$ \int_\R \frac{\rhosc(x)}{t - x}\ dx = -2t$$
for $-1 \leq t \leq 1$, and so
$$ - \frac{\sqrt{2(N-1)}}{2} \gamma_{i/(N-1)} - \sum_{|j-i| < m_0} \frac{|X_j|^2}{\lambda'_j - \lambda_i}
= O(m_0^{-0.4} N^{1/2}) + o(N^{1/2}).$$
In terms of the gaps $g_i$, this can be rewritten as
$$ - \frac{\gamma_{i/(N-1)}}{\sqrt{2} \rhosc(\gamma_{i/(N-1)})} - \sum_{|j-i| < m_0} \frac{|X_j|^2}{g'_i + \dots + g'_{j-1} + \tilde g_i}
= O(m_0^{-0.4}) + o(1)$$
where we adopt the convention that $g'_i + \dots + g'_{j-1} \coloneqq -(g'_j + \dots g'_{i-1})$ if $j < i$.

The derivative of the left-hand side in $\tilde g_i$ is
$$ \sum_{|j-i| < m_0} \frac{|X_j|^2}{(g'_i + \dots + g'_{j-1} + \tilde g_i)^2}
\geq \frac{|X_i|^2}{\tilde g_i}.$$
Outside of a set of small probability, one has the bounds
\begin{equation}\label{xgg}
|X_i|^2 \geq m_0^{-0.1}; \quad m_0^{-0.2} \leq g'_{i-1} \leq m_0^{0.1}
\end{equation}
thanks to Theorem \ref{gap-bounds}(ii), (iii), and similarly that
\begin{equation}\label{xjj}
   |X_j|, g'_j \leq m_0
\end{equation}
(say) whenever  $|j-i| < m_0$, thanks to Theorem \ref{gap-bounds}(ii) and the union bound.  Hence the aforementioned derivative is at least $m_0^{-0.2}$.  By the mean value theorem, we conclude that if $g^*_i$ is the unique solution to the equation
$$ - \frac{\gamma_{i/(N-1)}}{\sqrt{2} \rhosc(\gamma_{i/(N-1)})} - \sum_{|j-i| < m_0} \frac{|X_j|^2}{g'_i + \dots + g'_{j-1} + g^*_i} = 0$$
then outside of a set of small probability we have
$$ \tilde g_i = g^*_i + O(m_0^{-0.2}) + o(1).$$
One can view $g^*_i$ as a function
$$ g^*_i = G\left(\frac{i}{N-1}, g'_{i-m_0},\dots,g'_{i+m_0}, X_{i-m_0},\dots,X_{i+m_0} \right).$$
From the inverse function theorem, we see that $G$ is a smooth function of the indicated variables in the indicated region \eqref{xgg}, \eqref{xjj}, that is bounded by $m_0^{0.1}$ thanks to the upper bound on $g'_{i-1}$.  Thus, by abuse of notation (and by applying a suitable smooth cutoff function), we can identify $G$ with a smooth, compactly supported function of size $O(m_0^{0.1})$.  If we now shift $i$ by to $m$, we see that outside of an event of small probability (where we now allow implied constants to depend on $m$), we have
\begin{equation}\label{tgij}
   \tilde g_{i+j} = G\left(\frac{i+j}{N-1}, g'_{i+j-m_0},\dots,g'_{i+j+m_0}, X_{i+j-m_0},\dots,X_{i+j+m_0} \right) + O(m_0^{-0.2}) + o(1)
\end{equation}
for all $j=0,\dots,m$.  Applying $F$ (and allowing implied constants to depend on $F$), we see that
$$ F(g_i,\dots,g_{i+m}, \tilde g_i,\dots,\tilde g_{i+m}) = \tilde G\left(\frac{i}{N-1}, g'_{i-m_0},\dots,g'_{i+m+m_0}, X_{i-m_0}, \dots, X_{i+m+m_0}\right) + O(m_0^{-0.1}) + o(1)$$
outside of an event of small probability, for some smooth compactly supported function $\tilde G$ (which depends on $F$ and $m, m_0$, but is independent of $N$ and $i$, and bounded by $O(1)$).  Taking expectations, we conclude that
$$ \E F(g_i,\dots,g_{i+m}, \tilde g_i,\dots,\tilde g_{i+m}) = \E \tilde G\left(\frac{i}{N-1}, g'_{i-m_0},\dots,g'_{i+m+m_0}, X_{i-m_0}, \dots, X_{i+m+m_0}\right) + O(m_0^{-0.1}) + o(1).$$
Conditioning out the gaussian variables $X_{i-m_0},\dots,X_{i+m+m_0}$, we conclude that
$$ \E F(g_i,\dots,g_{i+m}, \tilde g_i,\dots,\tilde g_{i+m}) = \E G_*\left(\frac{i}{N-1}, g'_{i-m_0},\dots,g'_{i+m+m_0}\right) + O(m_0^{-0.1}) + o(1)$$
for some smooth compactly supported function $G_*$ (that can depend on $m,m_0$, but not on $i$ or $N$).
Applying \eqref{universal}, we conclude that
$$ \E F(g_i,\dots,g_{i+m}, \tilde g_i,\dots,\tilde g_{i+m}) = \E F(g_j,\dots,g_{i+m}, \tilde g_i,\dots,\tilde g_{j+m})  + O(m_0^{-0.1}) + o(1)$$
for any fixed $m_0$.  Letting $m_0$ go sufficiently slowly to infinity, we obtain the claim.

\subsection{Extension to more rows}

We now indicate how the above arguments can be extended to handle more than one row of gaps.  Let $m \leq m_1 \leq \dots \leq m_{k-1}$ be fixed integers to be chosen later. From \eqref{tgij} and a linear transformation, we may write
$$ g^{(N)}_j = G_1( g^{(N-1)}_{j-m_1}, \dots, g^{(N-1)}_{j+m_1}, X_{j-m_1,1},\dots,X_{j+m_1,1}) + O(m_1^{-0.2}) + o(1)$$
for each $i \leq j \leq i+m$ outside of an event of probability $O_m(m_1^{-0.1}) + o(1)$, where the $X_{j,1}$ are independent copies of $N(0,1/4)_\C$ that are also independent of the $g^{(N-1)}_{j}$, and some smooth compactly supported function $G_1 \colon \R^{4m_1+2} \to \R$ that depends on $m_1$ but is independent of $N$ and $i$.  We also have
$$ \tilde g^{(N)}_i = \tilde G_1( g^{(N-1)}_{i-m_1}, \dots, g^{(N-1)}_{i+m_1}, X_{i-m_1,1},\dots,X_{i+m_1,1}) + O(m_1^{-0.2}) + o(1)$$
outside of a similar exceptional event, for a similar function $\tilde G_1$.   In a similar vein, we have
$$ g^{(N-1)}_j = G_2( g^{(N-2)}_{j-m_2}, \dots, g^{(N-2)}_{j+m_2}, X_{j-m_2,1},\dots,X_{j+m_2,1}) + O(m_2^{-0.2}) + o(1)$$
and
$$ \tilde g^{(N-1)}_i = \tilde G_2( g^{(N-2)}_{i-m_2}, \dots, g^{(N-2)}_{j+m_2}, X_{i-m_2,1},\dots,X_{i+m_2,1}) + O(m_2^{-0.2}) + o(1)$$
for each $i-m_1 \leq j \leq i+m+m_1$ outside of an event of probability $O_{m,m_1}(m_2^{-0.1}) + o(1)$, where $X_{j,2}$ are independent copies of $N(0,1/4)_\C$ that are also independent of the $g^{(N-2)}_{j}$, and some smooth compactly supported functions $G_2, \tilde G_2 \colon \R^{4m_2+2} \to \R$ that depend on $m_2$ but is independent of $N$ and $i$.  If we let each $m_i$ be sufficiently large depending on the previous $m_1,\dots,m_{i-1}$ and $m$, we may compse all these functional relationships, and obtain a unified system
$$ g^{(N-r)}_j = G_{r}( (g^{(N-k+1)}_{j+h})_{|h| \leq 2 m_{k-1}}, (X_{j+h,l})_{|h| \leq 2m_{k-1}, 1 \leq l \leq k-1} ) + O(m_1^{-0.1}) + o(1)$$
and
$$ \tilde g^{(N-r)}_i = G'_{r}( (g^{(N-k+1)}_{i+h})_{|h| \leq 2 m_{k-1}}, (X_{i+h,l})_{|h| \leq 2m_{k-1}, 1 \leq l \leq k-1} ) + O(m_1^{-0.1}) + o(1)$$
(say) of functional dependencies outside of an event of probability $O(m_1^{-0.05})+o(1)$, for all $0 \leq r < k-1$ and $i \leq j \leq i+m$, and some smooth compactly supported functions $G_r, G'_r$ that do not depend on $i$ or $N$.  We can now repeat the final part of the proof of Theorem \ref{interlacing-univ} to obtain Theorem \ref{interlacing-univ-2}.

\section{Variance of interlacing statistics}\label{variance-sec}

In this section we prove Theorem \ref{thm-stats}.  We allow all implied consants to depend on $\delta$. We first observe that it will suffice to establish the following two variants: a ``low-frequency'' estimate
\begin{equation}\label{low-freq}
   \Var \sum_{l=1}^m \tilde g_{i+l} \ll m^{2-c}+o(1)
\end{equation}
controlling averaged gaps, for some absolute constant $c>0$, and a ``high-frequency'' estimate
\begin{equation}\label{high-freq}
 \Var \sum_{l=1}^m a_l (\tilde g_{i+l} - \tilde g_{i+l-1}) \ll_A  \left(\frac{m}{\log^A(2+m)}+o(1)\right) \sum_{l=1}^m |a_l|^2
\end{equation}
controlling gap differences for all $A>0$.  Indeed, if we partition $\{1,\dots,m\}$ into $O(m/\log^{A/c}(2+m))$ intervals $I$ of length $\asymp \log^{A/c}(2+m)$, then we can apply \eqref{low-freq} to each interval and use the triangle inequality to obtain the bound
$$ \Var \sum_{l=1}^m a_l \tilde g_{i+l} \ll_A \left(\frac{m}{\log^A(2+m)}+o(1)\right)\sum_{l=1}^m |a_l|^2$$
whenever $a_l$ is constant on each of the intervals $I$.  If, on the other hand, $a_l$ has mean zero on each of the intervals $I$, then we can write $a_l = b_l - b_{l-1}$, where $b_0,\dots,b_m$ is a sequence of complex numbers with
$$ \sum_{l=0}^m |b_l|^2 \ll \left(\log^{2A/c}(2+m)+o(1)\right) \sum_{l=1}^m |a_l|^2$$
as can be seen by using the Cauchy--Schwarz inequality and the local mean zero hypothesis to obtain pointwise bounds on each $b_l$.  Applying \eqref{high-freq} with $a_l$ replaced by $b_l$ (and $A$ replaced by $2A/c+A$), and summing by parts (using Theorem \ref{gap-bounds}(ii) and interlacing to control boundary terms), we see that
$$ \Var \sum_{l=1}^m a_l \tilde g_{i+l} \ll_A \left(\frac{m}{\log^A(2+m)}+o(1)\right) \sum_{l=1}^m |a_l|^2$$
in this case.  Since a general sequence $a_l$ can be split into sequences of the two types considered above (and with $\ell^2$ norm not exceeding that of the original sequence), we obtain the claim by the triangle inequality.

\subsection{The high-frequency estimate}

We begin with the high-frequency estimate \eqref{high-freq}.  Here we can take advantage of the triangle inequality to reduce to controlling ordinary eigenvalue gaps, rather than interlacing gaps, as follows.  Up to negligible errors (that are easily treated by Theorem \ref{gap-bounds}(ii)), we can write $\tilde g_{i+l} - \tilde g_{i+l-1}$ as
$$ g'_{i+l-1} - g_{i+l-1},$$
where
$$ g'_i \coloneqq \sqrt{(N-1)/2} \rhosc(\gamma_{i/(N-1)}) (\lambda'_{i+1}-\lambda'_i)$$
is the $i^{\mathrm{th}}$ normalized gap for the $N-1 \times N-1$ top left minor of $M_N$.  Thus, by the triangle inequality, it suffices to establish the bounds
$$
  \Var \sum_{l=1}^m a_l g_{i+l-1} \ll_A \left(\frac{m}{\log^A(2+m)}+o(1)\right) \sum_{l=1}^m |a_l|^2
$$
and
$$
  \Var \sum_{l=1}^m a_l g'_{i+l-1} \ll_A \left(\frac{m}{\log^A(2+m)}+o(1)\right) \sum_{l=1}^m |a_l|^2.
$$
We just establish the former estimate here, as the latter is proven identically (just replace $N$ by $N-1$).  By repeating the previous arguments (and adjusting $c$ as necessary), we can reduce this former estimate in turn to the estimates
\begin{equation}\label{la}
\Var \sum_{l=1}^m g_{i+l-1} \ll (m^{2-c}+o(1))
\end{equation}
and
\begin{equation}\label{lb}
  \Var \sum_{l=1}^m a_l (g_{i+l}-g_{i+l-1}) \ll_A \left(\frac{m}{\log^A(2+m)}+o(1)\right) \sum_{l=1}^m |a_l|^2
\end{equation}
for some $c>0$.
The estimate \eqref{la} (with any $0 < c < 1$) follows from Theorem \ref{gap-bounds}(iv), so we turn to \eqref{lb}.  Expanding the square, the left hand side is
$$\sum_{1 \leq l,l' \leq m} a_l \overline{a_{l'}} \Cov(g_{i+l}-g_{i+l-1}, g_{i+l'}-g_{i+l'-1}).$$
By \eqref{smash}, we can write this expression as
$$\sum_{1 \leq l,l' \leq m} a_l \overline{a_{l'}} \Cov(g_{i+l-l'}-g_{i+l-l'-1}, g_{i}-g_{i-1})$$
up to negligible errors.  By Plancherel's theorem, it then suffices to show that
$$ \sum_{-m \leq h \leq m} e(\xi h) \Cov(g_{i+h}-g_{i+h-1}, g_{i}-g_{i-1}) \ll_A \frac{m}{\log^A(2+m)}+o(1)$$
for all $\xi \in\R$.  The cutoff $-m \leq h \leq m$ is not positive definite in $h$, but we can compensate for this by the following averaging argument. It will suffice to show that
$$ \sum_{M-m \leq h \leq M+m} \left(1-\frac{|h-M|}{m}\right) e(\xi h) \Cov(g_{i+h}-g_{i+h-1}, g_{i}-g_{i-1}) \ll_A \frac{m}{\log^A(2+m)}+o(1)$$
for any fixed $M,m \geq 1$, as the claim then follows by replacing $m$ by $m^{1/2}$ (say), averaging in $M$ from $-m$ to $m$, and using Theorem \ref{gap-bounds}(ii) to control the boundary terms.  Using \eqref{smash}, the left-hand side may be rewritten as
$$ m^{-1} e(\xi M) \Cov\left( \sum_{l=1}^m e(\xi l) (g_{i+M+l}-g_{i+M+l-1}), \sum_{l=1}^m e(\xi l) (g_{i+l}-g_{i+l-1})\right)$$
up to negligible errors, so by the Cauchy-Schwarz inequality it suffices to show that
$$ \Var\left(\sum_{l=1}^m e(\xi l) (g_{i+l}-g_{i+l-1})\right) \ll_A \frac{m^2}{\log^A(2+m)}.$$
Bounding the variance by the second moment, we reduce to showing that
$$ \E \left|\sum_{l=1}^m e(\xi l) (g_{i+l}-g_{i+l-1})\right|^2 \ll_A \frac{m^2}{\log^A(2+m)}.$$

By summation by parts (and using Theorem \ref{gap-bounds}(ii) to control boundary terms), it suffices to show that
$$ \E \left|\sum_{l=1}^m (e(\xi (l+1)) - e(\xi l)) g_{i+l}\right|^2 \ll_A \frac{m^2}{\log^A(2+m)}.$$
The left-hand side can be bounded by $m^2 \|\xi\|$ by Theorem \ref{gap-bounds}(ii) and the triangle inequality, where $\|\xi\|$ denotes the distance of $\xi$ to the nearest integer.  Thus we may assume we are in the not-too-low-frequency case
\begin{equation}\label{highfreq}
  \|\xi\| \geq \frac{1}{\log^A(2+m)}.
\end{equation}
By the triangle inequality, it now suffices to show that
\begin{equation}\label{geelong} \E \left|\sum_{l=1}^m e(\xi l) g_{i+l}\right|^2 \ll \frac{m^2}{\log^A(2+m)} + o(1).
\end{equation}
We now divide into three subcases: the relatively low frequency case
\begin{equation}\label{low}
  \frac{1}{\log^A(2+m)} \leq \|\xi\| \leq \frac{1}{\log^2(2+m)}
\end{equation}
the medium frequency case
\begin{equation}\label{medium}
  \frac{1}{\log^2(2+m)} \leq \|\xi\| \leq \frac{1}{\log^{1/4}(2+m)}
\end{equation}
and the very high frequency case
\begin{equation}\label{veryhigh}
\|\xi\| \geq \frac{1}{\log^{1/4}(2+m)}.
\end{equation}
Actually we can subsume the medium frequency case into the relatively low frequency case as follows.  If \eqref{medium} holds, then
we introduce an intermediate scale $m' \coloneqq \exp(\log^{0.1} m)$; the point being that the medium frequency $\xi$ behaves like a low frequency relative to $m'$ in the sense that
$$   \frac{1}{\log^{10A}(2+m')} \ll \|\xi\| \ll \frac{1}{\log^2(2+m')}.$$
Assuming that \eqref{geelong} has already been established for the relatively low frequency case, we have
$$
\E \left|\sum_{l=1}^{m'} e(\xi l) g_{i+l}\right|^2 \ll \frac{(m')^2}{\log^A(2+m')} + o(1).
$$
If one averages in $i$ over an interval of length $m$ and uses the triangle inequality, we obtain the desired bound \eqref{geelong} (using Theorem \ref{gap-bounds}(ii) to handle boundary terms, and increasing $A$ as necessary).  Thus we may assume that we are either in the case \eqref{low} or \eqref{veryhigh}.

For now we treat the two cases \eqref{low}, \eqref{veryhigh} in a unified fashion.  We now transition from a fixed index estimate to a fixed energy estimate as follows.
By \eqref{smash}, we can write \eqref{geelong} as
$$ \frac{1}{\log^{10} N} \sum_{i \leq j < i + \log^{10} N} \Var\left(\sum_{l=1}^m e(\xi l) g_{j+l}\right)$$
up to negligible error.  Thus it will suffice to show that
$$ \sum_{i \leq j < i + \log^{10} N} \Var\left(\sum_{l=1}^m e(\xi l) g_{j+l}\right) \ll_A \left(\frac{m^2}{\log^A(2+m)}+ o(1)\right) \log^{10} N.$$
Observe that $\sum_{l=1}^m e(\xi l) g_{j+l}$ is equal to
$$
\sqrt{N/2} \rhosc(\gamma_{i/N}) \sum_{l=1}^m e(\xi l) (\lambda_{j+l+1}-\lambda_{j+l})$$
up to negligible errors.  Thus it suffices to show that
\begin{equation}\label{complex} \sum_{i \leq j < i + \log^{10} N} \E\left|\sum_{l=1}^m e(\xi l) (\lambda_{j+l+1}-\lambda_{j+l})\right|^2 \ll_A \left(\frac{m^2}{\log^A(2+m)} + o(1)\right) \frac{\log^{10} N}{N}.
\end{equation}
We can write
$$\sum_{l=1}^m e(\xi l) (\lambda_{j+l+1}-\lambda_{j+l}) =
e(-\xi j) \int_{\lambda_j}^{\lambda_{j+m}} e(\xi N_x)\ dx$$
where $N_x \coloneqq \# ( \{\lambda_1,\dots,\lambda_N\} \cap (-\infty,x])$.  Thus we can write the left-hand side of \eqref{complex} as as
$$ \sum_{i \leq j < i + \log^{10} N} \E\left|\int_{\lambda_j}^{\lambda_{j+m}} e(\xi N_x)\ dx\right|^2$$
which one can rearrange as
$$ 2 \Re \E \int_\R \int_0^\infty e(\xi (N_{x+h}-N_x)) \# \{ i \leq j \leq i+\log^{10} N: \lambda_j \leq x \leq x+h \leq \lambda_{j+m} \}\ dh dx.$$
Using \eqref{rigidity}, we see that we can restrict $x$ to the region
$$ \sqrt{2N} \gamma_{(i+\log^5 N)/N} \leq x \leq \sqrt{2N} \gamma_{(i+\log^{10} N - \log^5 N)/N}$$
with acceptable error; one can similarly restrict $h$ to the region
$$ 0 \leq h \leq \frac{\sqrt{2N}}{N \rhosc(x)} \log^2 N.$$ 
Rescaling $h$ by $\sqrt{2N} / N \rhosc(x)$, we can thus write the previous expression as
\begin{align*}
  & \frac{2\sqrt{2N}}{N \rhosc(\gamma_{i/N})} \Re \E \int_{\sqrt{2N} \gamma_{(i+\log^5 N)/N}}^{\sqrt{2N} \gamma_{(i+\log^{10} N-\log^5 N)/N}} \int_0^{\log^2 N} e(\xi N_{x,0,h}) \\
  &\quad \# \{ i \leq j \leq i+\log^{10} N: \lambda_j \leq f_x(0) \leq f_x(h) \leq \lambda_{j+m} \}\ dh dx
\end{align*}
up to acceptable error, where $N_{x,0,h} \coloneqq \# ( \{\lambda_1,\dots,\lambda_N\} \cap (f_x(0),f_x(h)])$.  From \eqref{rigidity} again, we can see that outside of an event of probability $O(N^{-100})$, one has
$$ \# \{ i \leq j \leq i+\log^{10} N: \lambda_j \leq f_x(0) \leq f_x(h) \leq \lambda_{j+m} \} = (m-N_{x,0,h})_+$$
for all $x$ in the region of integration, so we can simplify the previous expression up to acceptable error as
$$
   \frac{2\sqrt{2N}}{N \rhosc(\gamma_{i/N})} \mathrm{Re} \E \int_{\sqrt{2N} \gamma_{i/N}}^{\sqrt{2N} \gamma_{(i+\log^{10} N)/N}} \int_0^{\log^2 N} e(\xi N_{x,0,h}) (m - N_{x,0,h})_+\ dh dx.$$
By the triangle inequality, it thus suffies to show the fixed energy estimate
$$ \E \int_0^{\log^2 N} e(\xi N_{x,0,h}) (m - N_{x,0,h})_+\ dh \ll_A \frac{m^2}{\log^A(2+m)} + o(1)$$
for all $x$ in the region of integration.

From \eqref{varfx} we see that for $h \geq 2m$, the random variable $N_{x,0,h}$ has mean $(1+o(1)) h$ and variance $O( \log(2 + h))$, so that $(m - N_{x,0,h})_+$ has expectation
$$ \ll \frac{m \log(2+h)}{h^2} + o(1)$$
by Chebyshev's inequality.  The contribution of this regime can easily be seen to be acceptable, so we may restrict to the regime $h \leq 2m$. In this regime, a similar argument using the Cauchy--Schwarz inequality gives
$$ \E|(m - N_{x,0,h})_+ - (m - h)_+|
\leq \E |N_{x,0,h} - h| \ll \log^{1/2}(2+m) + o(1)$$
and so we may replace $(m - N_{x,0,h})_+$ by $(m - h)_+$ up to acceptable errors.  By the triangle inequality, it thus suffices to establish the bound
\begin{equation}\label{mmh}
   \int_0^m (m-h) \E e(\xi N_{x,0,h})\ dh \ll_A \frac{m^2}{\log^A(2+m)} + o(1)
\end{equation}
for any $x$ in the bulk region $[-1+\delta/2,1-\delta/2]$.

It is now time to separate the two cases \eqref{low}, \eqref{veryhigh}.
We begin with the very high frequency case \eqref{veryhigh}.  Here it suffices to obtain a bound of the form
$$
\E e(\xi N_{x,0,h}) \ll_A \frac{1}{\log^A(2+m)} + o(1)
$$
for $\frac{m}{\log^A(2+m)} \leq h \leq m$.
By \eqref{sigma-card} and independence, the left-hand side can be written as
$$
\prod_r (1-\mu_r + \mu_r e(\xi))$$
where $\mu_r$ are the eigenvalues of $1_I P_K 1_I$, with $I \coloneqq (f_x(0), f_x(h)]$ and $K$ the Dyson sine kernel.  Routine calculations yield
$$ \log \frac{1}{|1-\mu_r + \mu_r e(\xi)|} \gg \mu_r (1-\mu_r) \|\xi\|^2$$
so by \eqref{veryhigh} it will suffice to show that
$$ \sum_r \mu_r (1-\mu_r) \gg \log(2+m) + o(1).$$
By \eqref{A-1-alt} the left-hand side is
$$ \int_I \int_{I^c} K(a,b)^2\ da db$$
which we can rescale as
$$ \int_0^h \int_{[0,h]^c} \left(\sqrt{N/2} \rhosc(x) K(\sqrt{N/2} \rhosc(x) a, \sqrt{N/2} \rhosc(x) b)\right)^2\ da db.$$
Using \eqref{xxy-2} we may neglect the contributions of those $a$ with $|a| \geq m^{100}$ (say).  Using Plancherel--Rotach asymptotics \eqref{pra}, this expression can then be calculated to be
$$ \int_0^h \int_{[0,h]^c} \frac{\sin^2(\pi(a-b))}{\pi^2 (a-b)^2}\ da db$$
up to negligible errors, and the claim then follows by a routine calculation.

\subsection{The very low-frequency estimate}

Now we consider the very low frequency case \eqref{low}.  As before we set $N_{x,0,h} \coloneqq \# (\Sigma \cap (f_x(0),f_x(h)])$. We write \eqref{mmh} as
$$ \int_0^m \left(1 - \frac{h}{m}\right) \E e(\xi N_{x,0,h})\ dh \ll_A \frac{m}{\log^A(2+m)} +o(1)$$
We may take $A$ to be an integer.  We take advantage of the concentration of the quantity $N_{x,0,h}$ around its mean $h$. Indeed,
from \eqref{bern} and the variance bound \eqref{varfx}, we have
$$ \E |N_{x,0,h}-h|^K \ll_K \log^K(2+m)$$
for any $K$.  To take advantage of this, we Taylor expand
$$ e(\xi N_{x,0,h}) = \sum_{j=0}^{A-1} \frac{e(\xi h)}{j!} (N_{x,0,h}-h)^j \xi^j + O_A( \|\xi\|^A (N_{x,0,h}-h)^A).$$
By the hypothesis \eqref{low}, the contribution of the error term will be acceptable, so by the triangle inequality it suffices to establish the bounds
\begin{equation}\label{psihm-basic}
 \int_0^m (1-h/m) \E e(\xi h) (N_{x,0,h} - h)^j\ dh \ll_A \frac{m}{\log^A(2+m)}+o(1)
\end{equation}
for any $0 \leq j \leq A-1$.  It will suffice to obtain the bounds
\begin{equation}\label{psihm}
   \int \psi(h/m) \E e(\xi h) j! \binom{N_{x,0,h}}{j}\ dh \ll_{A,j} m^{-A} \|\psi\|_{C^{K_{A,j}}}+o(1)
\end{equation}
for any $A>0$, any fixed $j$, any smooth function $\psi$ supported on $(0,1)$, and some $K_{A,j}$ depending on $A, j$, as the previous claim then follows by approximating the function $t \mapsto 1-t$ on $(0,1)$ by such a smooth function $\psi(t)$ (up to an $L^1$ error of size $O(\log^{-C_A}(2+m)))$ for some suitably large $C_A$, applying \eqref{psihm} (noting that the gain of $m^{-A}$ will overcome any logarithmic losses from the $C^{K_{A,j}}$ norm), and then controlling the error using the triangle inequality.

Fix $j$; we allow implied constants to depend on $j$.  We will not be able to establish this bound purely from the triangle inequality, but instead need to carefully extract some cancellation from the $e(\xi h)$ factor using the bounds in \eqref{low}.
By \eqref{det} and rescaling we may write
$$ j! \binom{N_{x,0,h}}{j} =  \int_0^h \dots \int_0^h \det\left(\sqrt{N/2} \rhosc(x) K(\sqrt{N/2} \rhosc(x) a_i, \sqrt{N/2} \rhosc(x) a_l)\right)_{1 \leq i,l \leq j}\ da_1 \dots da_j$$
and hence by Plancherel--Rotach asymptotics \eqref{pra} this is equal to
$$ \int_0^h \dots \int_0^h \det\left(\frac{\sin(\pi (a_i-a_l))}{\pi(a_i-a_l)}\right)_{1 \leq i,l \leq j}\ da_1 \dots da_j + o(1)$$
with the usual convention that $\frac{\sin \pi x}{\pi x}$ equals $1$ when $x=0$.  If we write
$$ \frac{\sin \pi x}{\pi x} = \E e(\theta x)$$
where $\theta$ is a uniform random variable in $[-1/2,1/2]$, then we can write the previous expression as
$$ h^j \E \det(e( \theta_i(a_i - a_l) h))_{1 \leq i,l \leq j}$$
where $a_1,\dots,a_j$ and $\theta_1,\dots,\theta_j$ are independent random variables, uniformly distributed on $[0,1]$ and $[-1/2,1/2]$ respectively. Performing a Leibniz expansion of the determinant, this is
$$ h^j \sum_{\sigma \in S_j} (-1)^{\sigma} \E e\left( \sum_{i=1}^j \theta_i(a_i-a_{\sigma(i)}) h\right)$$
where $(-1)^\sigma$ is $+1$ when $\sigma$ is an even permutation and $-1$ when $\sigma$ is an odd permutation.
By Fubini's theorem, the left-hand side of \eqref{psihm} can now be written as
$$ m^{j+1} \sum_{\sigma \in S_j} (-1)^{\sigma} \E \varphi\left( m \left(\sum_{i=1}^j \theta_i(a_i-a_{\sigma(i)}) + \xi\right) \right) + o(1)$$
where $\varphi$ is the Fourier transform of $t \mapsto t^j \psi(t)$:
$$ \varphi(y) \coloneqq \int t^j \psi(t) e(y t)\ dt.$$
From repeated integration by parts, we have the bound
$$ \varphi(y) \ll_{K} (1+|y|)^{-K} \|\psi\|_{C^K}$$
for any $K>0$; also, for any $r \geq 0$, we can write $\varphi$ as an $r^{\mathrm{th}}$ derivative $\varphi = \varphi^{(r)}_r$ of another function $\varphi_r(y)$ that also obeys the bounds.
\begin{equation}\label{varphy}
  \varphi_r(y) \ll_{K,r} (1+|y|)^{-K} \|\psi\|_{C^K}
\end{equation}
for any $K>0$ and $r \geq 0$.

By the triangle inequality, it thus suffices to show that
$$ \E \varphi\left( m \left(\sum_{i=1}^j \theta_i(a_i-a_{\sigma(i)}) + \xi\right)\right) \ll_{A,j} m^{-A} \|\psi\|_{C^{K_{A,j}}}$$
for any $A > 0$ and any permutation $\sigma$.  To estimate this expression, we perform a partition of unity
$$ 1 = \eta_-(\theta_i) + \eta_0(\theta_i) + \eta_+(\theta_i),$$
where $\eta_\pm$ is a smooth bounded function supported on $[\pm \frac{1}{2}-m^{-0.1}, \pm \frac{1}{2}+m^{0.1}]$ for either choice of sign $\pm$, and $\eta_0$ is a smooth function supported on $[-\frac{1}{2} + m^{0.1}/2, \frac{1}{2}-m^{0.1}/2]$ that obeys derivative estimates $\eta_0^{(r)}(\theta) \ll_r m^{0.1r}$ for all $r \geq 0$.  We can then partition the above expectation into $O(1)$ components of the form
\begin{equation}\label{ephim}
   \E \varphi\left( m \left(\sum_{i=1}^j \theta_i(a_i-a_{\sigma(i)}) + \xi\right) \right) \prod_{i=1}^j \eta_{\epsilon_i}(\theta_i)
\end{equation}
where $\epsilon_i \in \{-,0,+\}$.

Fix the $\epsilon_i$. Suppose that there is some sign $i$ for which $\epsilon_i = 0$, and we condition $a_i - a_{\sigma(i)}$ to be larger than $m^{-0.1}$ in magnitude.  Then by writing $\varphi = \varphi^{(r)}_r$ and repeatedly integrating by parts in $\theta_i$ using \eqref{varphy} (noting that no boundary terms appear as $\eta_{\epsilon_i}$ is supported in the interior of $[0,1]$), one can see that the contribution of this term is acceptable (every integration by parts gains a factor of $\gg m^{0.9}$ while losing a factor of $\ll m^{0.1}$).  Thus we may condition to the event that $a_i - a_{\sigma(i)} = h_i$ for all $i$ with $\epsilon_i=0$, where the $h_i$ are real numbers with $|h_i| \leq m^{-0.1}$.  Note that if $\epsilon_i=0$ for all $i$ in a cycle of $\sigma$, then the $h_i$ are constrained to sum to zero along that cycle, but otherwise there is no constraint on the $h_i$.  Once one fixes the $h_i$, there will be some subset $(a_i)_{i \in I}$ of the $a_1,\dots,a_j$ that can still vary independently in an interval which lies within $O(m^{-0.1})$ of $[0,1]$, and which determine the remainder of the $a_i$.  This is best illustrated with an example.  Suppose for instance that $j=6$ and the permutation $\sigma$ is given by
$$ \sigma(1)=3; \quad \sigma(3)=2; \sigma(2)=1; \sigma(4)=5; \sigma(5)=6; \sigma(6)=5$$
and suppose that $\epsilon_i=0$ for $i=1,3,4$, thus constraining the quantities
$$ a_1 - a_3 = h_1; a_3 - a_2 = h_3; a_4 - a_5 = h_4.$$
Then the variables $a_1, a_4, a_6$ (say) can still vary independently, subject to the constraints
\begin{align*}
 -1/2, -1/2+h_1, -1/2+h_1+h_3 \leq a_1 &\leq 1/2, 1/2+h_1, 1/2+h_1+h_3;\\ \-1/2, -1/2+h_4 \leq a_2 &\leq 1/2+h_4;\\
 -1/2 \leq a_6 &\leq 1/2
 \end{align*}
and with the other values of $a_i$ determined by the relations
$$ a_3 = a_1-h_1; a_2 = a_1 - h_1 - h_3; a_5 = a_4 - h_4.$$
So we can take $I = \{1,4,6\}$ in this case (though other choices are also available).  

If we fix the $h_i$ for $\epsilon_i = 0$, as well as all of the $\theta_i$, then the expression $\sum_{i=1}^j \theta_i(a_i-a_{\sigma(i)})$ can then be written as an affine combination
$$ \sum_{i \in I} c_i a_i + c$$
of the free variables $a_i$, where the coefficients $c_i$ are all within $O(m^{-0.1})$ of a half-integer, and the shift $c$ is also of size $O(m^{-0.1})$.  For instance, continuing the previous example, we have the expression
$$ \theta_1(a_1-a_3) + \theta_3(a_3-a_2) + \theta_2(a_2-a_1) + \theta_4(a_4-a_5) + \theta_5(a_5-a_6) + \theta_6(a_6-a_4)$$
which simplifies to
$$ c_1 a_1 + c_4 a_4 + c_6 a_6 + c$$
where $c_1 = 0$, $c_4 = \theta_5 - \theta_6$, $c_6 = \theta_6 - \theta_5$, and $c = \theta_1 h_1 + \theta_3 h_3 + \theta_2 (-h_1-h_3) + \theta_4 h_4 + \theta_5 (-h_4)$.  Note that as $\theta_5, \theta_6$ are constrained to lie within $O(m^{-0.1})$ of either $1/2$ or $-1/2$, that $c_1,c_4,c_6$ lie within $O(m^{-0.1})$ of a half-integer, and because $h_1,h_3,h_4 = O(m^{-0.1})$, we also have $c = O(m^{-0.1})$.

We consider $c, c_i$ to be deterministic coefficients and just study the effect of averaging in the $a_i$.  We condition to be fixed any $a_i$ for which $c_i$ lies within $O(m^{-0.1})$ of the origin, leaving a modified expression $\sum_{i \in I'} c_i a_i + c'$ for some subset $I'$ of $I$ with $c_i$ within $O(m^{-0.1})$ of a non-zero half-integer for each $i \in I$, and some $c' = O(m^{-0.1})$.  Each $c_i a_i$ is uniform in an interval of the form $[O(m^{-0.1}), c_i + O(m^{-0.1})]$ if $c_i$ is positive, or $[c_i + O(m^{-0.1}), O(m^{-0.1})]$.  After repeatedly convolving, we conclude that $\sum_{i \in I'} c_i a_i + c'$ has a distribution that is piecewise polynomial; indeed, the $|I'|^{\mathrm{th}}$ (distributional) derivative of this distribution is a linear combination of Dirac delta functions located within $O(m^{-0.1})$ of half-integers, with coefficients of $O(1)$.  Meanwhile, by \eqref{low}, $\xi$ lies at distance $\geq \frac{1}{\log^A(2+m)}$ from any half-integer.  If we write $\varphi = \varphi^{(r)}_r$ for $r = |I'|$ and repeatedly integrate by parts using \eqref{varphy}, we thus see that this contribution to \eqref{ephim} is acceptable.

\subsection{The low-frequency estimate}\label{lowfreq-sec}

Now we prove the low-frequency estimate \eqref{low-freq}. Again, the first step is to transition from a fixed index estimate to a fixed energy estimate.  Let $\alpha = O(1)$ be a (deterministic) quantity depending on $i,N$ to be chosen later.  By \eqref{imi-2} and the triangle inequality it will suffice to show that
$$ \E \left|\sum_{l=1}^m \tilde g_{i+l} - \alpha g_{i+l}\right|^2 \ll m^{2-c} + o(1).$$
From Theorem \ref{interlacing-univ} and a standard truncation argument (using Theorem \ref{gap-bounds}(ii)) we have
$$  \E \left|\sum_{l=1}^m \tilde g_{i+l} - \alpha g_{i+l}\right|^2 =  \E \left|\sum_{l=1}^m \tilde g_{j+l} - \alpha g_{j+l}\right|^2 + o(1)$$
whenever $j = i + o(n)$.  Thus we may replace the left-hand side of \eqref{low-freq} by the averaged quantity
\begin{equation}\label{mag}
  \frac{1}{\log^{10} N} \sum_{i \leq j < i + \log^{10} N} \E \left|\sum_{l=1}^m \tilde g_{j+l} - \alpha g_{j+l}\right|^2
\end{equation}
up to negligible error.

Define the random function $F: \R \to \R$ by setting $F(t) = 1-\alpha$ if $f_{\gamma_{i/N}}(t)$ lies in an interval $[\lambda_j, \lambda'_j]$ for some $1 \leq j < N$, and $-\alpha$ otherwise.  Observe that the quantity $\sum_{l=1}^m \tilde g_{j+l} - \alpha g_{j+l}$ is equal to $\int_{f_{\gamma_{i/N}}^{-1}(\lambda_{j+1})}^{f_{\gamma_{i/N}}^{-1}(\lambda_{j+m+1})} F(t)\ dt$ up to negligible errors.  This in turn is equal to
$$ \int_{f_{\gamma_{i/N}}^{-1}(\lambda_{j+1})}^{f_{\gamma_{i/N}}^{-1}(\lambda_{j+1}+m)} F(t)\ dt + O( |g_{j+1}+\dots+g_{j+m}-m|).$$
The contribution of the error is acceptable thanks to Theorem \ref{gap-bounds}(iv), so we reduce to showing that
$$ \E \sum_{i \leq j < i + \log^{10} N} \left|\int_{f_{\gamma_{i/N}}^{-1}(\lambda_{j+1})}^{f_{\gamma_{i/N}}^{-1}(\lambda_{j+1}+m)} F(t)\ dt\right|^2 \ll (m^{2-c}+o(1)) \log^{10} N.$$
Using \eqref{rigidity}, the expression \eqref{mag} expands as
$$ \E \int_0^{\log^{10} N} \int_0^{\log^{10} N} 1_{|s-t| \leq m} a(s,t) F(t) F(s)\ ds dt$$
up to acceptable errors, where $a(s,t)$ is the number of $j$ such that
$$ f_{\gamma_{i/N}}(s)-m, f_{\gamma_{i/N}}(t)-m \leq \lambda_{j+1} \leq f_{\gamma_{i/N}}(s), f_{\gamma_{i/N}}(t).$$
Using \eqref{varfx}, we see that $a(s,t)$ has mean $m - |s-t|+o(1)$ and variance $O(\log(2+m)+o(1))$.  The contributions of the fluctuations are acceptable by Cauchy--Schwarz, so by the triangle inequality it suffices to show that
$$ \E \int_0^{\log^{10} N} \int_0^{\log^{10} N} (m-|s-t|)_+ F(t) F(s)\ ds dt \ll (m^{2-c}+o(1)) \log^{10} N.$$
Up to negligible errors, the left-hand side is
$$ \int_0^{\log^{10} N} \E \left|\int_t^{t+m} F(s)\ ds\right|^2\ dt,$$
so by the triangle inequality it suffices to show that
$$ \E \left|\int_t^{t+m} F(s)\ ds\right|^2 \ll m^{2-c} + o(1)$$
for each $0 \leq t \leq \log^{10} N$.  For sake of notation we shall just establish this for $t=0$ (the general case is similar, and basically amounts to shifting $i$ by $t$).

At this point we shall finally need the determinantal structure of the minor process.
Recall the joint eigenvalue process $\tilde \Sigma$ defined in \eqref{td-def}.  Observe that
$$ F(t) = \# (\tilde \Sigma \cap \{N\} \times (-\infty,f_x(t))) - \# (\tilde \Sigma \cap \{N-1\} \times (-\infty,f_x(t))) - \alpha$$
for almost every $t$, hence
\begin{equation}\label{fs}
  \int_0^m F(s)\ ds = m \left(\sum_{(N,\lambda) \in \tilde \Sigma} \eta(\lambda) - \sum_{(N-1,\lambda') \in \tilde \Sigma} \eta(\lambda') - \alpha\right)
\end{equation}
where $\eta$ is the continuous piecewise linear cutoff function
$$ \eta(x) \coloneqq \left( 1 - \sqrt{N/2} \rhosc(\gamma_{i/N}) (x - \sqrt{2N} \gamma_{i/N})_+ / m \right)_+.$$
This is a linear statistic in the minor process $\tilde \Sigma$.  A determinantal kernel
$$ \tilde K \colon (\{N-1,N\} \times \R) \times (\{N-1,N\} \times \R)  \to \R$$
of this process was computed in \cite{johansson-nordenstam} (see also \cite{adler} for an equivalent kernel).  If we abbreviate
$$ \tilde K_{N_1,N_2}(x_1,x_2) \coloneqq \tilde K((N_1,x_1), (N_2,x_2)),$$
then we can describe the kernel in terms of the orthonormal eigenfunctions
$$ \phi_n(x) \coloneqq h_n(x) e^{-x^2/2}$$
of the harmonic oscillator $-\frac{d^2}{dx^2}+x^2$ by the formulae
\begin{align*}
  \tilde K_{N,N}(x,x') &= \sum_{j=0}^{N-1} \phi_j(x) \phi_j(x') \\
  \tilde K_{N-1,N-1}(y,y') &=\sum_{j=0}^{N-2} \phi_j(y) \phi_j(y') \\
  \tilde K_{N-1,N}(y,x') &= \sum_{j=1}^{N-1} \sqrt{j} \phi_{j-1}(y) \phi_j(x') \\
  \tilde K_{N,N-1}(x,y') &= - \sum_{j=N}^\infty \frac{1}{\sqrt{j}} \phi_j(x) \phi_{j-1}(y'),
\end{align*}
where the infinite sum for the last kernel converges pointwise a.e. and in $L^2$; (see \cite[Theorem 1.3, (5.6), (5.7), Lemma 5.6]{johansson-nordenstam}).  We can then easily compute the first moment:
\begin{align*}
  \E \int_0^m F(s)\ ds &= \int_\R m \eta(x) (\tilde K_{N,N}(x,x) - \tilde K_{N-1,N-1}(x,x))\ dx - \alpha m\\
&= m (\int_\R \phi_{N-1}(x)^2 \eta(x)\ dx - \alpha).
\end{align*}
If we then set
$$ \alpha \coloneqq \int_{-\infty}^{\gamma_{i/N}} \phi_{N-1}(x)^2\ dx$$
then the mean of $\int_0^m F(s)\ ds$ is $o(1)$.  It now suffices to obtain the variance bound
$$ \Var \frac{1}{m} \int_0^m F(s) \ll m^{-c} + o(1).$$
Using \eqref{fs} and standard determinantal process calculations, the left-hand side may be expanded as
\begin{align*}
  &\int_\R \eta(x)^2 K_{N,N}(x,x)\ dx + \int_\R \eta(y)^2 K_{N-1,N-1}(y,y)\ dy\\
  &- \int_\R \int_\R \eta(x) \eta(x') K_{N,N}(x,x') K_{N,N}(x',x)\ dx dx'\\
  &- \int_\R \int_\R \eta(y) \eta(y') K_{N-1,N-1}(y,y') K_{N-1,N-1}(y',y)\ dy dy'\\
  &- 2 \int_\R \int_\R \eta(x) \eta(y') K_{N,N-1}(x,y') K_{N-1,N}(y',x)\ dx dy'.
\end{align*}
Expanding out into the orthonormal basis $\phi_n$, and introducing the coefficients
$$ a_{j,k} \coloneqq \int_\R \eta \phi_j \phi_k,$$ 
we have
$$ \int_\R \eta(x)^2 K_{N,N}(x,x)\ dx = \sum_{j < N} \|\eta \phi_j\|_{L^2}^2 = \sum_{j<N} \sum_k |a_{j,k}|^2$$
and
$$\int_\R \int_\R \eta(x) \eta(x') K_{N,N}(x,x') K_{N,N}(x',x)\ dx dx' = \sum_{j,k < N} |a_{j,k}|^2$$
and hence
\begin{align*}
&\int_\R \eta(x)^2 K_{N,N}(x,x)\ dx\\
&\quad  - \int_\R \int_\R \eta(x) \eta(x') K_{N,N}(x,x') K_{N,N}(x',x)\ dx dx' \\
&\quad =
\sum_{j < N \leq k} |a_{j,k}|^2.
\end{align*}
Similarly
\begin{align*}
  &\int_\R \eta(x)^2 K_{N-1,N-1}(y,y)\ dy  \\
  &\quad - \int_\R \int_\R \eta(y) \eta(y') K_{N-1,N-1}(y,y') K_{N-1,N-1}(y',y)\ dy dy' \\
  &\quad =
\sum_{j < N \leq k} |a_{j,k}|^2.
\end{align*}
Also we have
$$\int_\R \int_\R \eta(x) \eta(y') K_{N,N-1}(x,y') K_{N-1,N}(y',x)\ dx dy' = \sum_{1 \leq j < N \leq k} \frac{\sqrt{j}}{\sqrt{k}} a_{j,k} a_{j-1,k-1}.$$
Thus the variance can be expressed as
\begin{equation}\label{div}
\sum_{j < N \leq k} a_{j,k}^2 + \sum_{j < N-1 \leq k} a_{j,k}^2 - \sum_{1 \leq j < N \leq k} \frac{\sqrt{j}}{\sqrt{k}} a_{j,k} a_{j-1,k-1}.
\end{equation}
To estimate the coefficients $a_{j,k}$, we recall the classical raising and lowering identities
\begin{equation}\label{raising}
   -\phi'_n + x \phi_n = \sqrt{2(n+1)} \phi_{n+1}
\end{equation}
and
\begin{equation}\label{lowering}
   \phi'_n + x \phi_n = \sqrt{2n} \phi_{n-1}
\end{equation}
(with the convention that $\phi_{-1}=0$), leading to the eigenfunction identity

$$-\phi''_n + x^2 \phi_n = (2n+1) \phi_n.$$
This in turn implies the Wronskian identity
\begin{equation}\label{2jk}
   2(j-k) \phi_j \phi_k = (\phi'_k \phi_j - \phi'_j \phi_k)'.
\end{equation}
Integrating this against $\eta$, we conclude that
$$ a_{j,k} = \frac{1}{2(j-k)} \int_\R \eta' (\phi_j \phi'_k - \phi'_j \phi_k)$$
for $j \neq k$.  Since $\eta'$ is supported in a $O(mN^{-1/2})$ neighborhood of $\sqrt{2N} \gamma_{i/N}$ and has a total mass of $1$, standard Plancherel--Rotach asymptotics then give the bound
\begin{equation}\label{jk}
   a_{j,k} \ll \frac{1}{|j-k|}
\end{equation}
when $1 \leq j < N \leq k$ (or $1 \leq j < N-1 \leq k$).  This bound does not adequately control \eqref{div} by itself (there is a logarithmic divergence), but at least can handle the $j=1$ contribution of the first sum of \eqref{div}, leaving one with the task of establishing the bound
$$ \sum_{1 < j < N \leq k} a_{j,k}^2 + a_{j-1,k-1}^2 - \frac{\sqrt{j}}{\sqrt{k}} a_{j,k} a_{j-1,k-1} \ll m^{-c} + o(1).$$
We can write the left-hand side as
\begin{equation}\label{ajp}
  \sum_{1 < j < N \leq k} (a_{j,k}-a_{j-1,k-1})^2 + \left(1-\frac{\sqrt{j}}{\sqrt{k}}\right) a_{j,k} a_{j-1,k-1}.
\end{equation}
To improve the bound \eqref{jk}, we use the lowering identity \eqref{lowering} to write
$$ \phi_j \phi'_k - \phi'_k \phi_j = \sqrt{2k} \phi_j \phi_{k-1} - \sqrt{2j} \phi_{j-1} \phi_k$$
and hence by several applications of \eqref{2jk} and integration by parts we have
$$ a_{j,k} = \frac{\sqrt{2k}}{2(j-k)(j-k+1)} \int_\R \eta'' (\phi_j \phi'_{k-1} - \phi'_j \phi_{k-1})
- \frac{\sqrt{2j}}{2(j-k)(j-k-1)} \int_\R \eta'' (\phi_{j-1} \phi'_{k} - \phi'_{j-1} \phi_{k})$$
assuming $1 \leq j < N\leq k$ and $k-j > 1$.  As $\eta''$ is supported in a $O(mN^{-1/2})$ neighborhood of $\sqrt{2N} \gamma_{i/N}$ and has a total mass of $N^{1/2}/m$, Plancherel--Rotach asymptotics give the bound
$$ a_{j,k} \ll \frac{\sqrt{kN}}{m |j-k|^2}$$
in this case.  A standard calculation then shows that the contribution of those $j,k$ with $|j-k| \geq N/m^{1/2}$ is acceptable, so we can restrict attention to the case $|j-k| < N/m^{1/2}$.  From Taylor expansion we then have $1-\frac{\sqrt{j}}{\sqrt{k}} \ll \frac{j-k}{N}$, so the contribution of the second term of \eqref{ajp} is acceptable just from \eqref{jk}.  It thus remains to show that
$$   \sum_{1 \leq j < N \leq k: |j-k| < N/m^{1/2}} (a_{j,k}-a_{j-1,k-1})^2 \ll m^{-c} + o(1).$$
From \eqref{lowering}, \eqref{raising} and the Leibniz rule one has
$$ (\phi_j \phi_{k-1})' = \sqrt{2j} \phi_{j-1} \phi_{k-1} - \sqrt{2k} \phi_j \phi_k;$$
integrating this against $\eta$, and then integrating by parts we conclude that
$$ \int_\R \eta' \phi_j \phi_{k-1} = - \sqrt{2j} a_{j-1,k-1} + \sqrt{2k} a_{j,k}.$$
By Plancherel--Rotach asymptotics we have
$$ \int_\R \eta' \phi_j \phi_{k-1} \ll N^{-1/2};$$
from this estimate, the bound $\sqrt{2j} = (1 + O(|j-k|/N)) \sqrt{2k}$, and \eqref{jk}, we conclude that
$$ a_{j,k} = a_{j-1,k-1} + O( 1/N )$$
from which the claim follows.

\bibliographystyle{alpha}

\begin{thebibliography}{alpha}

  \bibitem{adler}
  M. Adler, E. Nordenstam, P. van Moerbeke, \emph{The Dyson Brownian minor process}, Tome 64 (2014), 971--1009.

\bibitem{boutillier}
C. Boutillier, \emph{The bead model and limit behaviors of dimer models}, Ann. Probab. \textbf{37} (2009), 107--142.

  \bibitem{cipolloni}
  G. Cipolloni, L. Erd\H{o}s, D. Schr\"oder. \emph{Quenched universality for deformed Wigner matrices}, Probab. Theory Relat. Fields \textbf{185} (2023), 1183--1218.

  \bibitem{dallaporta}
  S. Dallaporta, \emph{Eigenvalue variance bounds for Wigner and covariance random matrices}, Random Matrices Theory Appl. \textbf{1} (2012), 1250007, 28p.

  \bibitem{deift}
  P. Deift, T. Kriecherbauer, K.T.-R. McLaughlin, S. Venakides, X. Zhou, \emph{Uniform asymptotics for polynomials orthogonal with respect to varying exponential weights and applications to universality questions in random matrix theory}, Comm. Pure Appl. Math. \textbf{52} (1999), no. 11, 1335--1425.

  \bibitem{erc}
  N. M. Ercolani, K. D. T.-R. McLaughlin, \emph{Asymptotics of the partition function for random matrices via Riemann--Hilbert techniques and applications to graphical enumeration}, IMRM \textbf{14} (2003), 755--820.

  \bibitem{erdos-yau}
  L\'aszl\'o Erd\H{o}s; Horng-Tzer Yau, Gap universality of generalized Wigner and $\beta$-ensembles,  Journal of the European Mathematical Society (2015),   Volume: 017, Issue: 8, page 1927--2036


\bibitem{gustavsson}
J. Gustavsson, \emph{Gaussian fluctuations of eigenvalues in the GUE}, Annales de l'I.H.P. Probabilit\'es et statistiques \textbf{41} (2005), 151--178.

\bibitem{hough}
J. Ben Hough, M. Krishnapur, Y. Peres, B. Vir\'ag, Determinantal Processes and Independence, Probab. Surveys \textbf{3} (2006), 206--229.

\bibitem{johansson-nordenstam}
K. Johansson, E. Nordenstam, \emph{Eigenvalues of GUE minors}, Elec. J. Prob. \textbf{11} (2006), 1343--1371.

\bibitem{mehta}
M. L. Mehta, Random Matrices and the Statistical Theory of Energy Levels, Academic Press, New York, NY, 1967.

\bibitem{narayanan-limit}
H. Narayanan, \emph{On the limit of random hives with GUE boundary conditions}, preprint.

\bibitem{narayanan}
H. Narayanan, S. Sheffield, T. Tao, \emph{Sums of GUE matrices and concentration of hives from correlation decay of eigengaps}, Probab. Theory Relat. Fields \textbf{190} (2024), 1121--1165.

\bibitem{nazarov} F. L. Nazarov, ``Local estimates for exponential polynomials and their applications to inequalities of the uncertainty principle type'', Algebra i Analiz, 5:4 (1993), 3--66; St. Petersburg Math. J., 5:4 (1994), 663--717


\bibitem{Tao-gap} T. Tao, ``The asymptotic distribution of a single eigenvalue gap of a Wigner matrix'', Probab. Theory Relat. Fields 157, 81--106 (2013).

\bibitem{taovu}
T. Tao, V. Vu, \emph{Random covariance matrices: universality of local statistics of eigenvalues}, Ann. Probab. \textbf{40} (2012), 1285--1315.


\bibitem{tracy}
C. A. Tracy and H. Widom, Introduction to random matrices, in Geometric and Quantum Aspects of Integrable Systems, G. F. Helminck, ed., Lecture Notes in Physics, Vol. 424, Springer-Verlag (Berlin), 1993, pgs. 103-130.

\bibitem{vershynin}
R. Vershynin, High-Dimensional Probability: An Introduction with Applications in Data Science, Cambridge Series in Statistical and Probabilistic Mathematics, Series Number 47, Cambridge University Press, 2018.

\end{thebibliography}

\end{document}